\theoremstyle{plain}
\newtheorem{theorem}{Theorem}[section]
\newtheorem{corollary}[theorem]{Corollary}
\newtheorem{lemma}[theorem]{Lemma}
\newtheorem{proposition}[theorem]{Proposition}
\newtheorem{fact}[theorem]{Fact}
\newtheorem*{theorem*}{Theorem}
\newtheorem*{AKpp}{Ax-Kochen Principle}
\newtheorem*{Artin}{Artin's Conjecture}
\theoremstyle{definition}
\newtheorem{definition}[theorem]{Definition}
\newtheorem{example}[theorem]{Example}
\newtheorem*{definition*}{Definition}
\theoremstyle{remark}
\newtheorem{remark}[theorem]{Remark}
\newtheorem{exercise}{Exercise}
\newtheorem{claim}{Claim}
\newcommand\R{\mathbb{R}}	
\newcommand\C{\mathbb{C}}
\newcommand\Q{\mathbb{Q}}
\newcommand\N{\mathbb{N}}
\newcommand\Z{\mathbb{Z}}
\newcommand\F{\mathbb{F}}
\newcommand\MM{\mathbb{M}}
\def\cM{\mathfrak{m}}
\def\cO{\mathcal{O}}
\def\cU{\mathcal{U}}
\def\Th{\operatorname{Th}}
\newcommand\LL{\mathscr{L}}
\newcommand\LLr{\mathscr{L}_{\mathrm{ring}}}
\newcommand{\ACF}{\mathrm{ACF}}
\newcommand{\Id}{\mathrm{Id}}
\newcommand{\Frac}{\mathrm{Frac}}
\newcommand{\alg}{\mathrm{alg}}
\newcommand{\Gal}{\mathrm{Gal}}
\newcommand{\ac}{\mathrm{ac}}
\newcommand{\tp}{\mathrm{tp}}
\newcommand{\DP}{\mathrm{dp}}
\newcommand\LLvf{\mathscr{L}_{\mathrm{vf}}}
\newcommand\LLres{\mathscr{L}_{\mathrm{res}}}
\newcommand\LLgp{\mathscr{L}_{\mathrm{gp}}}
\newcommand\LLdp{\mathscr{L}_{\mathrm{dp}}}
\newcommand\LLos{\mathscr{L}_{\mathrm{1s}}}
\newcommand\LLts{\mathscr{L}_{\mathrm{3s}}}
\newcommand{\rsa}{\rightsquigarrow}
\newcommand\Tres{T_{\mathrm{res}}}
\newcommand\Tgp{T_{\mathrm{gp}}}
\newcommand\TDP{T^{\mathrm{dp}}}
\def\seq{\subseteq}
\def\sneq{\subsetneq}
\newcommand{\set}[1]{\left\{ {#1} \right\}}
\newcommand{\vect}[1]{\langle {#1} \rangle}
\newcommand{\abs}[1]{\lvert {#1} \rvert}
\DeclareMathOperator{\fix}{fix}
\DeclareMathOperator{\Aut}{Aut}
\DeclareMathOperator{\res}{res}
\definecolor{airforceblue}{rgb}{0.36, 0.54, 0.66}
\def\IND{\setbox0=\hbox{$x$}\kern\wd0\hbox to 0pt{\hss$\mid$\hss}
\lower.9\ht0\hbox to 0pt{\hss$\smile$\hss}\kern\wd0}
\def\NotIND{\setbox0=\hbox{$x$}\kern\wd0\hbox to 0pt{\mathchardef\nn=12854\hss$\nn$\kern1.4\wd0\hss}\hbox to 0pt{\hss$\mid$\hss}\lower.9\ht0 \hbox to 0pt{\hss$\smile$\hss}\kern\wd0}
\newcommand{\setword}[2]{%
  \phantomsection
  #1\def\@currentlabel{\unexpanded{#1}}\label{#2}%
}
\renewcommand{\models}{\vDash}
\begin{document}

\title{The Ax-Kochen-Ershov Theorem}

\author[C. d'Elb\'{e}e]{Christian d\textquoteright Elb\'ee}
\address{Mathematisches Institut der Universität Bonn\\
Office 4.004, Endenicher Allee 60\\ 53115 Bonn\\ Germany}
\urladdr{\href{http://choum.net/\textasciitilde chris/page\textunderscore perso/}{http://choum.net/\textasciitilde chris/page\textunderscore perso/}}

\date{\today}

\maketitle

\begin{abstract}
    
These are the notes of a course for the summer school \textit{Model Theory in Bilbao} hosted by the Basque Center for Applied Mathematics (BCAM) and the Universidad del País Vasco/Euskal Herriko Unibertsitatea in September 2023.

The goal of this course is to prove the Ax-Kochen-Ershov (AKE) theorem, see Theorem \ref{thm:AKE} below. This classical result in model theory was proven by Ax and Kochen and independently by Ershov in 1965-1966. The AKE theorem is considered as the starting point of the model theory of valued fields and witnessed numerous refinements and extensions. To a certain measure, motivic integration can be considered as such. The AKE theorem is not only an important result in model theory, it yields a striking application to $p$-adic arithmetics. Artin conjectured that all $p$-adic fields are $C_2$ (every homogeneous polynomial of degree $d$ and in $>d^2$ variable has a non trivial zero, see Definition \ref{def:C2}). A consequence of the AKE theorem is that the $p$-adics are \textit{asymptotically} $C_2$, in a sense that will be precised in Subsection \ref{sub:AKPARTIN}. The conjecture of Artin has been disproved by Terjanian in 1966, yielding that the solution given by the AKE theorem is in a sense optimal. The proof presented here is due to Pas but the general strategy stays faithful to the original paper of Ax and Kochen, which consist in the study of the asymptotic first-order theory of the $p$-adics.
\end{abstract}

\vspace{40pt}

\setcounter{tocdepth}{1}
\hrulefill
\tableofcontents	
\hrulefill

\section*{Introduction and preliminaries}

The object of study here are \textit{valued fields}, i.e. fields equipped with a \textit{valuation}.

\begin{definition*}
    A \textit{valuation} on a field $K$ is a group homomorphism $v : K^\times\to \Gamma$ where $(\Gamma,+,0,<)$ is an ordered abelian group which further satisfies \[v(a+b) \geq \min\set{v(a),v(b)}.\]
    We often extend $v$ to the whole $K$ by setting $v(0)>\gamma$ for all $\gamma\in \Gamma$, which is abbreviated by $v(0) = \infty$.
\end{definition*}

A typical example of a valued field is $\Q_p$, naturally equipped with the $p$-adic valuation $v_p$. A valuation on a field encompasses a rich set of data that we recall now. Let $(K,v)$ be a valued field.

\subsubsection*{Value group}
The \textit{value group} of $(K,v)$ is the subgroup $v(K^\times)$ of $\Gamma$. We often (but not always) assume that $\Gamma$ is the value group, i.e. that $v$ is onto. Note that as ordered abelian groups, $\Gamma$ and $v(K^\times)$ are torsion-free.

\subsubsection*{Valuation ring, maximal ideal} The set $\cO = \set{a\in K\mid v(a)\geq 0}$ is an integral domain which satisfies a very strong property: $a$ divides $b$ or $b$ divides $a$ (in $\cO$) for all $a,b\in \cO$. Domains satisfying this property are called \textit{valuation rings}. The spectrum of ideals is lineary ordered by inclusion and in particular, valuation rings are  \textit{local ring} i.e. they have a unique maximal ideal. The maximal ideal of $\cO$ is denoted $\cM$ and satisfies $\cM = \set{a\in K\mid v(a)>0}$. We refer to $\cO$ as \textit{the} valuation ring of $(K,v)$ and $\cM$ \textit{the} maximal ideal of $(K,v)$. The (multiplicative) group of units in $\cO$ is denoted $\cO^\times$. It is easy to check that $\cO^\times = \set{a\in K\mid v(a) = 0}$ and that the value group of $v$ is isomorphic to the quotient $K^\times/\cO^\times$.

\subsubsection*{Residue field} The quotient $\cO/\cM$ is a field called the \textit{residue field}, denoted $k$. The quotient map $\res:\cO\to k$ is called the \textit{residue map} and will play an essential role all along the paper.\\

We often recall the previous data via the following diagram.
\begin{center} 
\begin{tikzcd}
K \ar[r,"v"] \ar[d,"\res"] & \Gamma\cup\set{\infty}\\
k & 
\end{tikzcd}\\
\end{center}

A recurrent idea is that the valued field $(K,v)$ is ``controlled" by the value group and the residue field. It turns out that model theory is a nice setting to make this intuition concrete, as we will see with the AKE theorem.

The model-theoretic treatment of valued fields uses various languages, which are equivalent in the sense that they have the same first-order expressibility. Let $\LLr = \set{+,-,\cdot,0,1}$ be the language of rings, we generally use this language for rings and for fields.

\subsubsection*{Three-sorted language} The most intuitive way of encompassing the full structure of a valued field in a first-order language is by considering three sorts. Let $\LLts$ be the three sorted language defined by:
\begin{itemize}
    \item one sort for the valued field $K$ in a copy of the language of rings $\LLvf = \set{+,-,\cdot,0,1}$ (the \textit{valued field sort})
    \item one sort for the residue field $k$ in a different copy $\LLres = \set{+,-,\cdot,0,1}$ of the language of fields (the \textit{residue field sort})
    \item one sort for the value group $\Gamma$ in the language of ordered groups expanded by a constant $\LLgp = \set{+,-<,0,\infty}$ (the \textit{value group sort})
    \item a function symbol $v:K\to \Gamma\cup \set{\infty}$ for the valuation
    \item a function symbol $\res: K\to k$ for an extension of the residue map $\cO\to k$ to $K$.
\end{itemize}
Each valued field $(K,v)$ can be considered as a $\LLts$-structure by interpreting the right objects in the right sorts. The residue map $\res:\cO\to k$ will be extended to $K$ by setting $\res(K\setminus \cO) = \set{0}$. Note that the value group sort is a little more than a group because of $\infty$, and we extend the group structure so that $\gamma+\infty = \infty$, $-\infty = \infty$. As a multi-sorted structure, variables used to construct sentences and formulas are tagged by the sort they talk about. To make this apparent in $\LLts$, we will use $x,y,z,\ldots$ as variables for the valued field sort; $\xi,\zeta,\ldots$ for variables in the value group sort; and $\bar x, \bar y, \bar z,\ldots$ for the residue field sort. It is easy to write down an $\LLts$-theory whose models are exactly valued fields in which $v,\res$ are onto and $\res\upharpoonright \cO $ is the residue map\footnote{Once $v$ has been specified to be a valuation, one just has to say that if $v(x)\geq 0, v(y)\geq 0$ then $\res(x) = \res(y)\iff v(x-y)>0$ and if $v(x)<0$ then $\res(x) = 0$.} and every given valued field $(K,v)$ can be seen as an $\LLts$-structure, usually denoted $(K,k,\Gamma)$ or $(K,k,\Gamma,v,\res)$.

\subsubsection*{One-sorted language} Let $\LLos =\LLr\cup\set{P}$ where $P$ is a unary predicate. A valued field $(K,v)$ is often considered in the more economical language $\LLos$ by letting $P$ be a predicate for the valuation ring $\cO$. From a valued field $(K,\cO)$ in $\LLos$, we can recover the three-sorted structure $(K,k,\Gamma)$
. The valuation function is interpretable as the canonical projection $K^\times\to K^\times/\cO^\times$ from $K^\times $ to the sort $K^\times/\cO^\times$. For instance, 
\begin{align*}
    v(a) = v(b)&\iff v(ab^{-1}) = 0\\
    &\iff ab^{-1}\in \cO^\times\\
    &\iff \exists y\ y\in \cO\wedge ya = b.
\end{align*}

Statements about elements of $K^\times/\cO^\times $ reduce to statements in $(K,\cO)$. 
The ordered group structure on the imaginary sort $K^\times/\cO^\times$ is also definable: for instance, one defines the order on $K^\times/\cO^\times$ by $v(a)\leq v(b)\iff ba^{-1}\in \cO$. The residue map and the residue field are also interpretable as the canonical projection $\res:\cO\to k = \cO/\cM$, extending to $K\setminus \cO$ by $0$. One sees that the interpretation is uniform, in the sense that it follows the same procedure from any $\LLos$ valued field $(K,\cO)$ using only that $\cO$ is a valuation ring.

\subsubsection*{Ring language for the valuation ring} The most economic way of treating a valued field $(K,v)$ in first-order logic is by considering the valuation ring $\cO$ in the language $\LLr$. From $\cO$ we recove $K$ which is the fraction field of $\cO$ (which is interpretable as the quotient of $\cO\times \cO$ by the definable relation $(x,y)\sim (z,t)\iff xt-yz=0$) as well as a copy $\cO'$ of $\cO$ in $K$ (which consists of the image of elements of the form $(a,1)$ in the projection $\pi:\cO\times \cO\to K$).

We see here that the model-theory of valued field essentially reduces to the model-theory of valuation rings, but difference between languages might still be relevant, especially in Section \ref{sec:pastheorem} where we will need to be more explicit about the value group and the residue field. We will allow ourselves to freely switch from one language to another when considering a given valued field although most of the time the three-sorted language $\LLts$ will be preferred.\\

\begin{center} 
\begin{tikzcd}
 & (K,v) \arrow[dl, rightsquigarrow] \arrow[d, rightsquigarrow] \arrow[dr, rightsquigarrow] &\\
\cO \ar[r, leftrightarrow] & (K,\cO) \ar[r, leftrightarrow] & (K,k,\Gamma)\\
\LLr \ar[u,dash] & \LLos \ar[u,dash] & \LLts \ar[u,dash]
\end{tikzcd}
\end{center}

\begin{exercise}
    Write down (or convince yourself that it exists) the following.
    \begin{enumerate}
        \item The $\LLts$-theory $T_\mathrm{ts}$ of valued field (with surjective valuation $v$).
        \item The $\LLos$-theory $T_\mathrm{os}$ of valued fields.
        \item The $\LLr$-theory $T_\mathrm{vr}$ of valuation rings.
    \end{enumerate}
\end{exercise}

\subsection*{Notations and conventions} In a valued field $(K,v)$, for consistency with the notations of the associated three-sorted structure $(K,k,\Gamma)$, we will generally use the variables $a,b,c,\ldots$ for elements of the field $K$, $\alpha,\beta,\gamma,\ldots$ for elements of the value group $\Gamma$ and $\bar a,\bar b,\ldots$ for elements of the residue field $k$.

\clearpage

\section{The Ax-Kochen-Ershov Theorem}

\subsection{$p$-adic numbers}

Let $p$ be a prime number. The ring $\Z_p$ of $p$-adic integers is for us the set of formal sums:
\[\sum_{i\in \N} a_i p^i \quad a_i\in \set{0,\ldots,p-1}.\]
There is a unique way to represent elements\footnote{Note that if every finite sum $\sum_{i=0}^n a_ip^i$ represents an element of $\N$, elements of $\Z$ can be infinite sums: $-1 = \sum_{i\in \N} (p-1)p^i$.} of $\Z$ (even of $\Z_{(p)}$) in $\Z_p$ and addition and multiplication in $\Z_p$ are the ones extending addition and multiplication in $\Z$ in the most natural way (i.e. addition componentwise with reminder, and distributive multiplication with reminder).

The ring of $p$-adic integers is usually defined as the inverse limit of the family of rings $(\Z/p^n\Z)_n$ (with natural epimorphism $\Z/p^m\to \Z/p^n\Z$ for $m\geq n$) hence $p$-adic integers as sequences $(a_i)_{i\in \N}$ such that $a_i\equiv a_{j}\ (\mathrm{mod}\  p^i)$ for $i\leq j$. This gives a more formal construction but is equivalent in the end\footnote{Note that the correspondence is \textit{not} via $(a_i) \leftrightarrow \sum_i a_i p^i$ (but is it close).}.

The ring $\Z_p$ is a local domain with maximal ideal $p\Z_p$. Even more, it is a valuation ring and its field of fraction is denoted $\Q_p$, the field of $p$-adic numbers. The representation as infinite sum of $\Z_p$ extends to $\Q_p$ by letting the index rang over integer: every element in $\Q_p$ is a sum
\[\sum_{i\geq i_0} a_ip^i \quad a_i\in \set{0,\ldots,p-1}\]
for some $i_0\in \Z$. The map $v_p: \Q_p \to \Z\cup\set{\infty}$ defined by $v(x) = \infty\iff x = 0$ and for $\sum_{i} a_ip^i\neq 0$
\[
    v(\sum_{i} a_ip^i) = \min\set{i\mid a_i \neq 0} 
\]
defines a valuation on $\Q_p$, called the \textit{$p$-adic valuation}. The valuation ring associated is $\cO = \Z_p$, the maximal ideal is $\cM = p\Z_p$, the value group is $\Gamma = \Z$ and the residue field is $\Z_p/p\Z_p\cong \Z/p\Z = \F_p$. We summarize this data by the following:
\begin{center} 
\begin{tikzcd}
\Q_p \ar[r,"v_p"] \ar[d,"\res"] & \Z\cup\set{\infty}\\
\F_p & 
\end{tikzcd}\\
\end{center}

As Kurt Gödel used to say\footnote{Quoted from Christopher Nolan's \underline{Oppenheimer} movie.}: \textit{Trees are the most inspiring structures.} For a model-theorist point of view, the structure that encompasses the combinatorial aspect of valued fields (and especially $\Q_p$) is the one of a tree. We describe how this is done, by representing $\Z_p$ as a tree. One thinks of elements of $\Z_p$ as \textit{branches} of an infinite tree rooted in one single point with $p$ branches at each note, representing the choice of the coefficient $a_i$ of the term $a_i p^i$. Hence the nodes on each branches are indexed by the positive part of the value group (here $\N$) and the choices at each nodes represents the residue field $\F_p$.
\[\sum_i a_i p^i  =\text{ the branch choosing $a_i$ at the $p^i$-th level}\]
We refer to Figure \ref{fig:padic} for a sketch of the tree representation of $\Z_2$. In the representation of $\Z_p$ as a tree, there is a special branch: the $0$ branch. We represent it on the far left of the drawing and it is the branch choosing $0$ at each level. Every element branching on the zero branch at a level say $p^{i_0}$ is written $a = \sum_{i\geq i_0} a_i p^{i}$ so $v(a) = i_0$. More generally the node where two elements $a$ and $b$ branch is at the level corresponding to the valuation of $a-b$: below this branching, $a$ and $b$ made the same choices of $a_i$'s hence the difference cancel this prefix. 
\tikzset{every picture/.style={line width=0.75pt}} 

\begin{figure}
    \centering
    \includegraphics[ scale=.4]{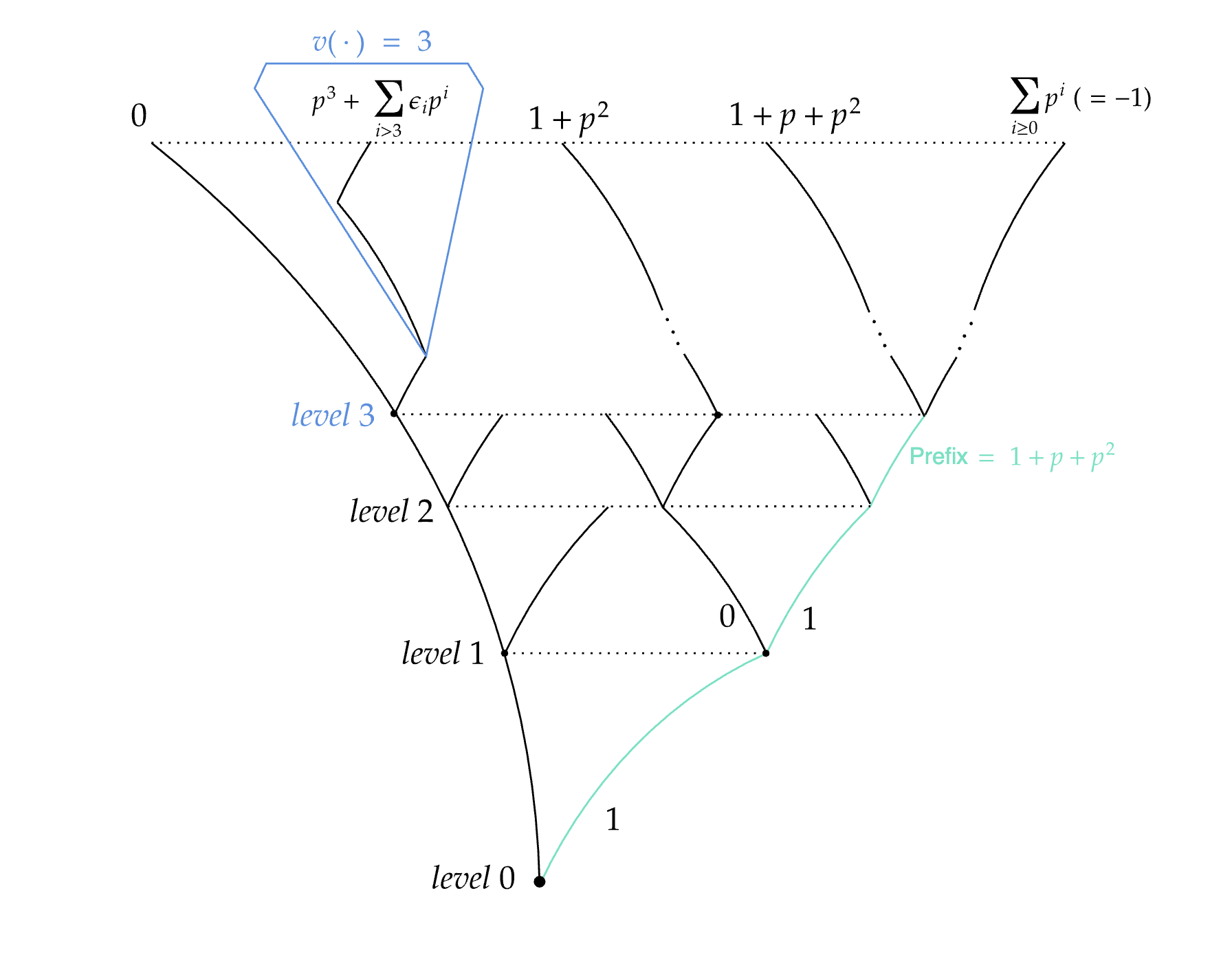}
    \caption{Tree representation of $\Z_p$, for $p=2$}
    \label{fig:padic}
\end{figure}


\subsection{Formal Laurent series} For any field $K$ the field of formal Laurent series $K((t))$ is the set of formal sums $\sum_{i\geq i_0} a_i t^i$ for $a_i\in K$ and $i_0\in \Z$ with obvious addition and multiplication naturally extending the ones of polynomials in $t$. The field $K((t))$ can be equipped with the \textit{$t$-adic valuation} $v_t$ defined as follows
\[v_t(\sum_{i} a_i t^i) = \min\set{i\in \Z\mid a_i\neq 0}.\]
The associated valuation ring is the ring $K[[t]]$ of formal series $\sum_{i\geq 0}a_i t^i$, the residue field is $K$ and the value group is $\Z$.
\begin{center} 
\begin{tikzcd}
K((t)) \ar[r,"v_t"] \ar[d,"\res"] & \Z\cup\set{\infty}\\
K & 
\end{tikzcd}\\
\end{center}

For a prime number $p$, the tree representation of $\F_p[[t]]$ can be done exactly as for the $p$-adic integers, by representing elements $\sum_{i\geq 0} a_i t^i$ as branches and the path represents the choices of the sequence $(a_i)_{i\geq 0}$, one gets the same tree as for $\Z_p$.

Those tree representations do not reflect the arithmetic in $\Z_p$ or $\F_p[[t]]$ but it enlightens a strong similarity between $\Z_p$ and $\F_p[[t]]$, which is precisely what the Ax-Kochen principle is all about.

\subsection{The Ax-Kochen principle and Artin's conjecture}\label{sub:AKPARTIN}

Recall that in model theory, rings and fields are often considered in the language of (unital) rings $\LLr = \set{+,-,\cdot,0,1}$. The goal of this course is to present the following transfer theorem proved in 1965 by Ax and Kochen \cite{AK65I}.

\begin{AKpp}\label{AKpp}
    Let $\theta$ be any sentence of the language $\LLr$. Then for all but finitely many prime numbers $p$, we have
    \[\Z_p\models \theta\iff \F_p[[t]]\models \theta .\]
    Equivalently, for any sentence $\theta$ in the three-sorted language $\LLts$, then for all but finitely many $p$ we have
    \[(\Q_p,\F_p,\Z)\models \theta \iff (\F_p((t)), \F_p,\Z )\models \theta.\]
\end{AKpp}

The fact that the two statements are equivalent comes from the fact that the ring $\Z_p$ is bi-interpretable with the three-sorted structure $(\Q_p,\F_p,\Z)$ and similarly for $\F_p[[t]]$ and $(\F_p((t)),\F_p,\Z)$.

We will see that this theorem follows from an important quantifier elimination result, the Ax-Kochen-Ershov theorem (see Theorem \ref{thm:AKE} below). The idea behind the Ax-Kochen principle is that $(\Q_p)_p$ and $(\F_p((t)))_p$ ``asymptotically" share the same first-order theory. Before going into those considerations, we state an important application of the Ax-Kochen principle on a conjecture of Artin.

\noindent\textbf{A little history.} It all starts in 1933 when Tsen proves that if $K$ is an algebraically closed field, then there are no nontrivial central division algebras over the field $K(X)$. 
Reading on Tsen's work, Artin isolated the property that $K(X)$ satisfies and which prevents central division algebras over $K(X)$ to exist. This property --called \textit{quasi-algebraically closed} at that time-- corresponds to the property $C_1$: for all $d\in \N^{>0}$, an homogeneous polynomial in $>d$ variables has a nontrivial zero. The notion were later generalised by Lang.

\begin{definition}[Lang]\label{def:C2}
    For $d,i\in \N^{>0}$, we say that a field $K$ is $C_i(d)$ if every homogeneous polynomial of degree $d$ with $>d^i$ variables with coefficients in $K$ has a nontrivial zero in $K$. A field is $C_i$ if it is $C_i(d)$ for all $d\in \N^{>0}$.
\end{definition}

\begin{remark}\label{rk:C2onlyind2+1}
    A field $K$ is $C_i(d)$ if and only if every homogeneous polynomial of degree $d$ with $d^i+1$ variables with coefficients in $K$ has a nontrivial zero in $K$, for $d>1$. See Exercise \ref{exo:homogeneouspolynomials}.
\end{remark}

Existence of central division algebras over a given field are intrinsically linked to solutions of certain polynomial equations. A famous theorem of Wedderburn yields that there are no central division algebra over a finite field, hence Artin (and already Dickson before him) conjectured that every finite field is $C_1$. This was proved by Chevalley in 1935 \cite{Chev35}.

\begin{fact}[Chevalley\footnote{This theorem were later extended by Warning (Chevalley-Warning Theorem) and then by Ax \cite{ax64} to the following stronger form. Let $K$ be a finite field with $q$ elements of characteristic $p$. Let $f$ be a polynomial of degree $d$ in $n$ variables with coefficient in $K$. Let $N(f)$ be the number of distinct zero of $f$ in $K^n$. If $n>d$ and $a$ is the largest integer strictly less than $\frac{n}{d}$ then $q^a$ divides $N(f)$. In particular, if $f$ has no constant term, then $\vec 0$ is a zero of $f$ hence there exists a nontrivial zero of $f$.}, 1935]
    Every finite field is $C_1$.
\end{fact}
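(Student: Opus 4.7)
The plan is to prove the classical Chevalley theorem via the characteristic-function trick, which simultaneously gives the stronger Chevalley--Warning statement in the footnote. Let $K=\F_q$ with $q=p^m$, and let $f\in K[x_1,\ldots,x_n]$ be homogeneous of degree $d$ with $n>d$. The zero set $Z(f)\seq K^n$ is nonempty since $f$ is homogeneous (the origin is a zero), so it suffices to show that $\abs{Z(f)}$ is divisible by $p$.

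To get such a divisibility, I would first encode $Z(f)$ as a polynomial indicator: since $a^{q-1}=1$ for every $a\in K^\times$ and $0^{q-1}=0$, the function $g(x) := 1 - f(x)^{q-1}$ takes value $1$ when $f(x)=0$ and $0$ otherwise. Hence
\[
\abs{Z(f)} \equiv \sum_{x\in K^n} g(x) = q^n - \sum_{x\in K^n} f(x)^{q-1} \pmod{p}.
\]
Since $q^n\equiv 0\pmod p$, the whole problem reduces to showing that $\sum_{x\in K^n} f(x)^{q-1}=0$ in $K$.

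The key computation is the power-sum lemma: for $k\in \N$,
\[
\sum_{a\in K} a^k \;=\;
\begin{cases} -1 & \text{if } k>0 \text{ and } (q-1)\mid k,\\ 0 & \text{otherwise,}\end{cases}
\]
proved by picking a generator of $K^\times$ and summing a geometric series (with the $k=0$ case giving $q\equiv 0$). Expanding $f(x)^{q-1}$ as a $K$-linear combination of monomials $x_1^{k_1}\cdots x_n^{k_n}$, each monomial contributes $\prod_i \sum_{a\in K} a^{k_i}$ to the sum over $K^n$, and by the lemma this vanishes unless every $k_i$ is a positive multiple of $q-1$. In that nonvanishing case $\sum_i k_i\geq n(q-1)$. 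But homogeneity of $f$ forces $\sum_i k_i = d(q-1)$ on every monomial of $f^{q-1}$, and the hypothesis $n>d$ yields $n(q-1)>d(q-1)$, a contradiction. Thus every monomial contributes $0$, and $\sum_{x\in K^n} f(x)^{q-1}=0$.

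Combining, $\abs{Z(f)}\equiv 0\pmod p$, and because $0\in Z(f)$ we get $\abs{Z(f)}\geq p\geq 2$, producing a nontrivial zero. The only subtle step is the power-sum computation, which is where one uses the structure of $K^\times$ as a cyclic group; everything else is bookkeeping. The same argument actually refines to the Chevalley--Warning count $p\mid N(f)$ for any (not necessarily homogeneous) $f$ with $\deg f < n$, which is what is alluded to in the footnote.
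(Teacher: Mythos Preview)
Your argument is the standard Chevalley--Warning proof and is correct as written; the only thing to note is that the paper does not supply its own proof of this statement at all---it is recorded as a \emph{Fact} with a citation to Chevalley's 1935 paper, and the footnote merely mentions the later refinements by Warning and Ax without reproducing them. So there is nothing in the paper to compare your route against: you have filled in what the paper leaves as a black box, and done so with exactly the argument one would expect (indicator function $1-f^{q-1}$, power-sum lemma over $\F_q$, degree count $d(q-1)<n(q-1)$).

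One very minor remark: your appeal to homogeneity in step ``$\sum_i k_i = d(q-1)$'' is stronger than needed---$\deg f = d$ already gives $\sum_i k_i \leq d(q-1)$, which suffices for the contradiction and is what makes your final sentence (the extension to arbitrary $f$ with $\deg f < n$) go through without change.
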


Later, the result of Tsen were to be generalised in various forms, for instance if $K$ is $C_i$ then $K(X_1,\ldots,X_j)$ is $C_{i+j}$ (Greenberg \cite{Greenbegr67}) and if $K$ is $C_i$ then $K((t))$ is $C_{i+1}$ (Greenberg, \cite{Greenberg66}). Together with Chevalley's result, we obtain a result already proved by Lang in 1952 \cite{Lang52}.

\begin{fact}[Lang, 1952]
    $\F_p((t))$ is $C_2$, for all $p$.
\end{fact}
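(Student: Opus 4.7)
The plan is to combine the two facts already stated. By Chevalley's theorem the finite field $\F_p$ is $C_1$, and by Greenberg's theorem, $K$ being $C_i$ implies $K((t))$ is $C_{i+1}$. Applying the latter with $K=\F_p$ and $i=1$ yields immediately that $\F_p((t))$ is $C_2$.

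Since the two cited facts do all the work, the only thing worth sketching is \emph{why} the passage from $C_1$ over the residue field to $C_2$ over $K((t))$ is plausible. Given a homogeneous polynomial $f\in \F_p((t))[X_1,\ldots,X_n]$ of degree $d$ with $n>d^2$, I would first normalise $f$ by multiplying by a suitable power of $t$ and rescaling so that all coefficients lie in $\F_p[[t]]$ and at least one coefficient is a unit. The reduction modulo $t$ is then a nonzero homogeneous polynomial $\bar f\in \F_p[X_1,\ldots,X_n]$ of degree $d$ in $n>d^2\geq d$ variables, and Chevalley's theorem yields a nontrivial zero $\bar a\in \F_p^n$ of $\bar f$. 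If $\bar a$ were a smooth point of the hypersurface $\{\bar f=0\}$, Hensel's lemma over the complete local ring $\F_p[[t]]$ would lift $\bar a$ to a zero $a\in \F_p[[t]]^n$ of $f$, completing the argument.

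The main obstacle I expect is that Chevalley provides \emph{some} nontrivial zero of $\bar f$, but not necessarily a smooth one, whereas a direct Hensel lift requires non-degeneracy. The gain of one level in the $C_i$ hierarchy (from $n>d$ to $n>d^2$) is precisely what is needed to push the argument through: either one extracts a smooth zero by a dimension count on the singular locus, cut out by the $n$ partial derivatives $\partial \bar f/\partial X_i$ of degree $d-1$; or one iterates the reduction, approximating a zero modulo increasing powers of $t$ and re-invoking $C_1$ at each stage. Either way, the delicate bookkeeping needed to close this induction is where the real content of Greenberg's argument lies, and it is also the reason it is comfortable to invoke Greenberg's theorem as a black box here rather than reprove it in the notes.
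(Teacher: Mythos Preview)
Your proposal is correct and matches the paper's treatment exactly: the paper does not give a formal proof of this Fact, but in the paragraph immediately preceding it explains that the result follows by combining Chevalley's theorem (finite fields are $C_1$) with Greenberg's theorem ($K$ is $C_i$ implies $K((t))$ is $C_{i+1}$). Your additional heuristic sketch of why the $C_i \to C_{i+1}$ passage works is not in the paper, but it is reasonable motivational material and does not detract from the argument.
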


Concerning the $p$-adics, a hundred years ago, H. Hasse \cite{Hasse23} proved that  every quadratic form (i.e. homogeneous polynomial of degree $2$) over $\Q_p$ in $5$ variables have a nontrivial zero in $\Q_p$. In other words, $\Q_p$ is $C_2(2)$. The existence of normic forms of order $2$ (i.e. forms of degree $d$ in $d^2$ variables without nontrivial zeros) on $\Q_p$ prevent $\Q_p$ to be $C_1$. In 1936, Artin made the following conjecture.

\begin{Artin}[1936]
$\Q_p$ is $C_2$, for all $p$.
\end{Artin}

In 1952, Lewis \cite{lewis52} proved that $\Q_p$ is $C_2(3)$, a new step toward the proof of the conjecture. In 1965, Ax and Kochen used Lang's result to get the following ``asymptotic" solution to Artin's conjecture:

\begin{corollary}\label{cor:artinssolution}
    For all $d\in \N$, there exists $N = N(d)$ such that $\Q_p$ is $C_2(d)$ for all $p>N$. In other words, for each $d\in \N$, the set of $p$ such that $\Q_p$ is not $C_2(d)$ is finite.
\end{corollary}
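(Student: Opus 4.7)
The plan is to express the property ``$K$ is $C_2(d)$'' as a single first-order sentence $\theta_d$ in the language of rings, and then transfer it from $\F_p((t))$ to $\Q_p$ via the Ax-Kochen principle, using Lang's theorem as the source input.

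First, I would fix $d\in \N^{>0}$ and construct $\theta_d$. By Remark \ref{rk:C2onlyind2+1} it suffices, for each field $K$, to check that every homogeneous polynomial of degree $d$ in exactly $n := d^2+1$ variables has a nontrivial zero in $K$. Setting $M := \binom{n+d-1}{d}$ and enumerating the degree-$d$ monomials in $n$ variables as $m_1(x_1,\ldots,x_n),\ldots,m_M(x_1,\ldots,x_n)$, the sentence
\[
\theta_d \;:=\; \forall c_1\cdots\forall c_M\,\exists x_1\cdots\exists x_n\;\Bigl(\bigvee_{j=1}^n x_j\neq 0 \;\wedge\; \sum_{i=1}^M c_i\, m_i(x_1,\ldots,x_n)=0\Bigr)
\]
is a sentence of $\LLr$, and for any field $K$ we have $K\models \theta_d$ if and only if $K$ is $C_2(d)$.

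Next, by Lang's theorem, $\F_p((t))$ is $C_2$ for every prime $p$, so $\F_p((t))\models \theta_d$ for every prime $p$. I would then apply the Ax-Kochen principle to $\theta_d$, read as a sentence of the $\LLts$-structures $(\Q_p,\F_p,\Z)$ and $(\F_p((t)),\F_p,\Z)$ concerning only the valued-field sort: there exists a finite exceptional set $S_d$ of primes such that for all $p\notin S_d$,
\[
(\Q_p,\F_p,\Z)\models \theta_d \;\Longleftrightarrow\; (\F_p((t)),\F_p,\Z)\models \theta_d.
\]
Combining, $\Q_p\models \theta_d$ for every $p\notin S_d$, so taking $N(d) := \max S_d$ (or $0$ if $S_d=\emptyset$) yields that $\Q_p$ is $C_2(d)$ for all $p>N(d)$, as required.

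The only slightly delicate point is checking that $C_2(d)$ really is captured by a \emph{single} first-order sentence in $d$; once one observes (via Remark \ref{rk:C2onlyind2+1}) that fixing the number of variables to $d^2+1$ suffices, the coefficients of a homogeneous polynomial of degree $d$ form a finite tuple and the sentence $\theta_d$ above is straightforward. After that, the argument is a direct application of the Ax-Kochen principle and Lang's theorem, with no further obstacle.
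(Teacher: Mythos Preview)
Your proof is correct and follows essentially the same approach as the paper: reduce via Remark~\ref{rk:C2onlyind2+1} to $d^2+1$ variables, encode $C_2(d)$ as a single $\LLr$-sentence $\theta_d$ by quantifying over the finitely many coefficients of the degree-$d$ monomials, then combine Lang's theorem with the Ax-Kochen principle. The only cosmetic difference is that the paper adds the hypothesis ``$\vec c\neq \vec 0$'' in $\theta_d$, which you (harmlessly) omit since the zero polynomial trivially has nontrivial zeros.
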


They used the Ax-Kochen principle as follows.

\begin{proof}
    Let $d\in \N$ and $m = d^2+1$. Consider the list $(M_i(X_1,\ldots,X_m))_{1\leq i \leq l}$ of all monomials of degree $d$ and for each $\vec x = (x_1,\ldots,x_l)$ introduce the notation \[P_{\vec x}(\vec X) := \sum_{i = 1}^l x_iM_i(\vec X)\]
    For any field $K$, the set $\set{P_{\vec a}(\vec X)\mid \vec a\in K^l}$ consists of all homogeneous polynomials of degree $d$ in $\leq m$ variables.

    Let $\theta_d$ be the following sentence:
    \[\forall x_1,\ldots, x_l \left[ \underbrace{\vec x \neq \vec 0}_{\text{$P_{\vec x}(\vec X)$ is not the zero polynomial}} \rightarrow \underbrace{\exists z_1,\ldots, z_m  \left(  P_{\vec x}(\vec z) = 0  \wedge \vec z\neq \vec 0 \right)}_{\text{$P_{\vec x}(\vec X)$ has a nontrivial zero}}  \right] .\] 
    By Remark \ref{rk:C2onlyind2+1}, for any field $K$, we have $K\models \theta_d$ if and only if $K$ is $C_2(d)$. Note that if $P_{\vec x}(\vec X)$ uses strictly less than the variables in $\vec X$, then it is trivial that is has a nontrivial zero.
    By the Ax-Kochen principle, there exists $N = N(\theta) = N(d)$ such that for all $p> N$ we have $\F_p((t))\models \theta_d$ if and only if $\Q_p\models \theta_d$. By Lang's theorem $\F_p((t))$ is $C_2$ for all $p$ hence for $p>N$ we have $\Q_p\models \theta_d$. 
\end{proof}

This solution is only asymptotic and does not fully answer the question asked by Artin. Considering that Artin's conjecture is false in general, this asymptotic solution is not so bad afterall. Indeed, around the same time as Ax and Kochen's solution, Guy Terjanian found the first counterexample to Artin's conjecture.

\begin{example}[$\Q_2$ is not $C_2(4)$]
    Consider $F(x_1,\ldots,x_{18})$ to be the form:
    \[G(x_1,x_2,x_3)+G(x_4,x_5,x_6)+G(x_7,x_8,x_9) + 4G(x_{10},x_{11},x_{12})+ 4G(x_{13},x_{14},x_{15})+ 4G(x_{16},x_{17},x_{18})\]
    for $G(x,y,z) = x^4+y^4+z^4-(x^2y^2+x^2z^2+y^2z^2) - xyz(x+y+z)$. Then Terjanian proves in \cite{Terjanian66} that the only zero of $F$ in $\Q_2$ is the trivial one, so $\Q_2$ is not $C_2(4)$.
\end{example} 

More example were found afterwards however in each case with $d$ even. It is a current open question whether Artin's conjecture is true for odd $d$, in other words, is every $\Q_p$ $C_2(d)$ for all odd $d$? In particular it is still open whether every $\Q_p$ is $C_2(5)$. See \cite{HeathbrownICM} for more on that topic.

\begin{remark}[On bounds]
    There exists an explicit bound for the value of $N(d)$ in Corollary \ref{cor:artinssolution}. In \cite{Brown78}, Brown proved that $N(d)$ can be choosen to be
\[2^{2^{2^{2^{2^{11^{d^{4d}}}}}}}.\]

The same method that we will use to prove the Ax-Kochen principle also yields that the first-order theory of the field $\Q_p$ is decidable. This appear first in \cite{AK65II}. Given $(p,d)$, there exists a procedure for deciding whether the statement $\theta_d$ (from the proof of Corollary \ref{cor:artinssolution}) is true or false in $\Q_p$. Hence for a fixed degree $d$, one could in theory use an algorithm to check that $\Q_p$ is $C_2(d)$ for all prime $p$ lower than Brown's bound. In the particular case of $d = 5$, Heath-Brown \cite{HeathbrownICM} proved that the bound can be reduced to $17$, but as he puts it ``This is certainly decidable in principle, but whether it is realistic to expect
a computational answer with current technology is unclear."
\end{remark}

\begin{remark}
    Note that being $C_i$ is equivalent to the following stronger formulation: $K$ is $C_i$ if for all $f_1,\ldots,f_r$ homogeneous polynomials in $n$ variables of degree $d$ with $n>rd^i$ there exists a nontrivial common zero of $f_1,\ldots,f_r$. This result is attributed to Lang and Nagata, see \cite{Greenbegr67}.
\end{remark}

\begin{exercise}
    Prove that $\R$ is not $C_i(2d)$ for any $i,d\in \N^{>0}$.
\end{exercise}

\begin{exercise}\label{exo:homogeneouspolynomials}
    Consider a homogeneous polynomial $f\in K[X_1,\ldots, X_{n+1}]$ of degree $d$ in $X_1,\ldots,X_{n+1}$ variables. Assume that $f$ has no nontrivial zeros in $K$, then $g(X_1,\ldots,X_{n}) = f(X_1,\ldots,X_n,0)$ is homogeneous of the same degree as $f$ and has no nontrivial zeros in $K$.
    \begin{enumerate}
        \item Prove that $X_{n+1}$ does not divide $f$.
        \item Deduce that $g(X_1,\ldots,X_n)$ is nonzero. 
        \item Conclude.
    \end{enumerate}
\end{exercise}

\subsection{Henselian valued fields}

The $p$-adics and other valued fields that we will consider here share a very important property which we define now.

\begin{definition}
    A valued field $(K,v)$ is \textit{Henselian} if it satisfies the following property:
    
    \textbf{Simple zero lift}. For each $P\in \cO[X]$ and $\bar a\in k$ such that $\res(P)(\bar a) = 0$ and $\res(P')(\bar a) \neq  0$ there exists $b\in \cO$ such that $P(b) = 0$ and $\res(b) = \bar a$.
\end{definition}

\begin{remark}\label{rk:henselianisfirstorder}
In whatever language considered to study a valued field $(K,v)$, being Henselian is a first-order property. Indeed, $\cO$, $k$ and the map $\res:\cO\to k$ are interpretable. Write $P_{\vec x}(y) = \sum_{i=0}^n x_iy^i$ and $P'_{\vec x}(y) = \sum_{i=1}^n i x_i y^{i-1}$ and the set of sentences
\[\forall x_0\ldots x_n \exists \bar y \left[ (P_{\res(\vec x)} (\bar y) = 0\wedge P'_{\res(\vec x)}(\bar y)\neq 0)\to \exists y P_{\vec x}(y) = 0\wedge  \res(y) = \bar y\right] \]
for all $n\in \N^{>0}$ is satisfied by a valued field if and only if it is Henselian.
\end{remark}

\begin{fact}
    $(\Q_p,v_p)$ and $(K((t)),v_t)$ are Henselian valued field.
\end{fact}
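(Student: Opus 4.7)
The plan is to prove Hensel's simple zero lift property for any complete non-archimedean valued field, and then to invoke that both $\Q_p$ and $K((t))$ are complete with respect to the ultrametric induced by their valuation. The lifting argument will be a Newton iteration: starting from a lift of the residual zero, one refines it via $a_{n+1}:=a_n-P(a_n)/P'(a_n)$, and this sequence turns out to be Cauchy with limit the desired root.

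First I would establish completeness of each field. In $K((t))$, a $v_t$-Cauchy sequence $(f_n)$ has coefficients that eventually stabilize in each fixed degree $i$ (since $v_t(f_n-f_m)\geq N$ means the coefficients of $t^i$ for $i<N$ agree), so one can read off a limit coefficient-by-coefficient and recognise it as an element of $K((t))$. For $\Q_p$, completeness is built into the construction (as the fraction field of $\varprojlim \Z/p^n\Z$, equivalently as the metric completion of $\Q$ under $|\cdot|_p$); Cauchy sequences of $p$-adic expansions stabilise coefficient-wise for the same reason.

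Next comes the Newton step. Given $P\in\cO[X]$ and $\bar a\in k$ with $\res(P)(\bar a)=0$ and $\res(P')(\bar a)\neq 0$, I would lift $\bar a$ to some $a_0\in\cO$, so that $v(P(a_0))\geq 1$ and $v(P'(a_0))=0$. The Taylor expansion in $\cO[X,Y]$
\[P(X+Y)\;=\;P(X)+P'(X)Y+Y^{2}Q(X,Y),\qquad Q\in\cO[X,Y],\]
applied with $Y:=-P(a_n)/P'(a_n)\in\cM$ (which lies in $\cO$ precisely because $v(P'(a_n))=0$), yields the quadratic-doubling estimate
\[v(P(a_{n+1}))\;\geq\;2\,v(P(a_n)).\]
Moreover $a_{n+1}-a_n\in\cM$ forces $\res(a_{n+1})=\res(a_n)=\bar a$, whence $\res(P'(a_{n+1}))=\res(P'(a_n))\neq 0$, so the assumption $v(P'(a_n))=0$ propagates through the iteration. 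Consequently $v(a_{n+1}-a_n)=v(P(a_n))\to\infty$, the sequence $(a_n)$ is Cauchy, and by completeness converges to some $b\in\cO$; continuity of polynomials with respect to $v$ then gives $P(b)=\lim_n P(a_n)=0$ and $\res(b)=\res(a_0)=\bar a$.

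The main point to pin down carefully is the interplay between the quadratic-doubling estimate and the propagation of $v(P'(a_n))=0$: together they are exactly what makes Newton's iteration converge to a zero sitting above the prescribed residue. Once these are in hand, the rest is routine ultrametric bookkeeping, and the only non-algebraic ingredient --- completeness --- is directly visible from the power-series descriptions of $\Q_p$ and $K((t))$ given in the previous subsections.
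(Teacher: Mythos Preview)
Your argument is correct and matches the approach the paper sketches rather than proves: the statement is recorded as a \textit{Fact} without proof, and the remark immediately following it explains that $\Q_p$ is complete for the $p$-adic absolute value and that Newton's method shows any complete valued field with archimedean value group satisfies the simple zero lift. Your write-up is precisely a fleshed-out version of that remark, carried out uniformly for both $\Q_p$ and $K((t))$.

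One small cosmetic point: when you write ``$v(P(a_0))\geq 1$'' you are implicitly using that the value group is $\Z$; since you announced the Newton step for an arbitrary complete non-archimedean field, it would be cleaner to write $v(P(a_0))>0$ there and only invoke $\Gamma=\Z$ (or more generally $\Gamma$ archimedean, as the paper's remark does) at the moment you need $2^n\cdot v(P(a_0))\to\infty$ to conclude that $(a_n)$ is Cauchy.
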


\begin{remark}
    In $\Q_p$ we have the $p$-adic absolute value given by $\abs{a}_p = p^{-v_p(a)}$. We have $\abs{a+b}_p\leq \max\set{\abs{a}_p,\abs{b}_p}$ and $\abs{\cdot}_p$ endows $\Q_p$ with an ultrametric for which $\Q_p$ is complete. Using Newton's method, one gets a results due to Hensel, that every complete valued field with $v(K^\times)\seq \R$ (archimedean value group) satisfies the simple zero lift property. 
\end{remark}

\begin{remark}\label{rk:henselslemma}
    There seems to be an ambiguity in the litterature about what Hensel's lemma really is. For a few authors, Hensel's lemma is the fact that complete valued fields with archimedean value group satisfy the simple zero lift, or an equivalent statement such as: 
    \begin{center}
    $(\ast)$ given $a\in \cO$ and $P\in \cO[X]$ with $v(P(a))>2v(P'(a))$, there exists $b\in \cO$ with $P(b) = 0$ and $v(b-a)>v(P'(a)))$
    \end{center}
    For most authors, this statement $(\ast)$ (or any of its variant, or the simple zero lift property) is Hensel's lemma itself. See Exercise \ref{exo:henselslemma} for a proof that the simple zero lift is equivalent to $(\ast)$.
\end{remark}

\subsection{The Ax-Kochen-Ershov Theorem}

The Ax-Kochen principle follows from a theorem proved in 1965 by Ax and Kochen \cite{AK65I} and independently on the other side of the iron curtain by Ershov \cite{ershov65}.

Given any valued field $(K,v)$ with value group $\Gamma$ and residue field $k$, there are three cases for the pair of characteristics $(\mathrm{char}(K), \mathrm{char}(k))$:
\begin{itemize}
    \item (Equicharacteristic $0$) $(0,0)$ this happens if $\mathrm{char}(k) = 0$. Examples are $\C((X))$, $\R((X))$. 
    \item (Equicharacteristic $p$) $(p,p)$ this happens if $\mathrm{char}(K) = p$. Examples are $\F_p((X))$, $\F_p^\alg((X))$.
    \item (Mixed characteristic) $(0,p)$ this happens if $v(p)>0$. An example is $\Q_p$.
\end{itemize}

\begin{remark}
    As $v(1) = 0$, in equicharacteristic $0$, one has $v(n) = 0$ for all $n\in \Z$, hence as elements of valuation $0$ are invertible in the valuation ring, one has $\Q^\times\seq \cO^\times$.
\end{remark}

Recall that for two structures $M,N$ in the same language $\LL$, we write $M\equiv N$ (in $\LL$) if every $\LL$-sentence true in $M$ is also true in $N$ and vice versa. Recall that a valued field $(K,v)$ can be consider in various equivalent languages, in particular it can be seen as an $\LLts$-structure $(K,k,\Gamma)$ where $k$ is the residue field and $\Gamma$ the valued group.

\begin{theorem}(Ax-Kochen-Ershov)\label{thm:AKE}
    Let $(K,k_K,\Gamma_K)$ and $(L,k_L,\Gamma_L)$ be two valued fields in the three-sorted language $\LLts$ which are Henselian and of equicharacteristic $0$. Then
    \[(K,k_K,\Gamma_K)\equiv (L,k_L,\Gamma_L) \quad \text{ as valued fields in $\LLts$}\iff \begin{cases}
        k_K\equiv k_L &\text{ (as fields in $\LLres$) and }\\
        \Gamma_K\equiv \Gamma_L & \text{ (as ordered groups in $\LLgp$)}
    \end{cases}
    \]
\end{theorem}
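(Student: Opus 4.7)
The $(\Rightarrow)$ direction is routine: the residue field $k$ and the value group $\Gamma$ are (quantifier-free) interpretable sorts of the three-sorted structure, so every $\LLres$-sentence and every $\LLgp$-sentence translates into an $\LLts$-sentence. Elementary equivalence in $\LLts$ therefore descends to each of the two smaller sorts.

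For $(\Leftarrow)$ I would follow Pas's strategy and reduce the problem to a quantifier-elimination theorem in an enrichment of $\LLts$. Expand the language by an \emph{angular component} symbol $\ac$, to be interpreted as a multiplicative group homomorphism $K^\times\to k^\times$, extended by $\ac(0)=0$, that agrees with $\res$ on $\cO^\times$. An angular component need not exist on a given Henselian valued field, but in equicharacteristic $0$ it can always be constructed after passing to a sufficiently saturated elementary extension, by splicing a section of the valuation with the residue map.

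The heart of the proof is then Pas's theorem, to be established in Section \ref{sec:pastheorem}: the $\LLts\cup\{\ac\}$-theory of Henselian valued fields of equicharacteristic $0$ admits elimination of \emph{valued-field-sort} quantifiers. Applied to a sentence $\theta$ in $\LLts$ (with no free valued-field variables to feed into residue/valuation terms), this rewrites $\theta$ modulo the theory as a Boolean combination of pure $\LLres$-sentences about $k$ and pure $\LLgp$-sentences about $\Gamma$. Granting this, the $(\Leftarrow)$ implication follows by passing to saturated elementary extensions $(K^*,k_K^*,\Gamma_K^*)$ and $(L^*,k_L^*,\Gamma_L^*)$ large enough to carry angular components, rewriting an arbitrary $\LLts$-sentence $\theta$ as above, and observing that each boolean atom has the same truth value on both sides under the hypotheses $k_K\equiv k_L$ and $\Gamma_K\equiv \Gamma_L$. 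Hence $K^*\equiv L^*$, and therefore $(K,k_K,\Gamma_K)\equiv (L,k_L,\Gamma_L)$ in $\LLts$.

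The main obstacle is of course Pas's quantifier elimination itself. Its proof proceeds by a back-and-forth argument between highly saturated Pas-structures: one builds an increasing chain of partial isomorphisms between small valued subfields, matching residue and value group data using the hypothesised elementary equivalences on those sorts, and transferring the match back to the valued field sort via the angular component. The critical extension step is the one across an immediate algebraic extension, where Hensel's lemma (the simple zero lift) forces uniqueness of henselisations and so closes the back-and-forth. The equicharacteristic $0$ hypothesis is essential here: it neutralises wild ramification obstructions and allows any residue-field algebraic datum to be lifted back through $\ac$ at each step.
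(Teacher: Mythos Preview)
Your proposal is correct and follows essentially the same route as the paper: pass to $\aleph_1$-saturated elementary extensions, equip them with angular component maps, and then invoke Pas's theorem (Theorem~\ref{thm:pas}) to conclude. The only cosmetic difference is that the paper appeals to the \emph{completeness} clause of Pas's theorem for $\TDP(\Th(k_K),\Th(\Gamma_K))$ directly, whereas you spell out the equivalent observation that field-quantifier elimination reduces any $\LLdp$-sentence to a Boolean combination of pure $\LLres$- and $\LLgp$-sentences; your sketch of the back-and-forth behind Pas's theorem also matches the paper's Steps~0--4, though the paper treats the transcendental immediate case (via Kaplansky's pseudo-Cauchy machinery) with as much care as the algebraic/henselisation step you singled out.
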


This theorem will follow from Pas' theorem, which we will prove in the next section. 

Let us see now how the Ax-Kochen principle follows from this result.

\begin{proof}[Proof of the Ax-Kochen principle from Theorem \ref{thm:AKE}]
    By contradiction assume that $\theta$ is an $\LLts$ sentence such that for some infinite subset $S$ of prime numbers we have $(\Q_p,\F_p,\Z)\models \theta$ for all $p\in S$ and $(\F_p((t)),\F_p,\Z)\models \neg \theta$ for all $p\in S$.

    Let $\cU$ be a non-principal ultrafilter on the set of primes such that $S\in \cU$. Consider the $\LLts$-structures
    \[(K,k_K,\Gamma_K) = \prod_{\cU} (\Q_p,\F_p,\Z)\text{ and }(L,k_L,\Gamma_L) = \prod_{\cU} (\F_p((t)),\F_p,\Z).\]
    $K$ and $L$ are valued fields. Let $\sigma_{p}$ be the $\LLr$ sentence expressing 
    \[\underbrace{1+\ldots+1}_{\text{$p$ times}} = 0.\]
    For all $q$, $\Q_p\models \neg \sigma_q$ hence by \L o\'s theorem, $K$ is of characteristic $0$. Similarly, for all but one $q$ we have $\F_p((t))\models \sigma_q$ hence $L$ is of characteristic $0$. Both $k_K$ and $k_L$ are the pseudo-finite field $\prod_{\cU} \F_p$. For all but one $q$, we have $\F_p\models \neg \sigma_q$ hence $k_K$ and $k_L$ are of characteristic $0$. We conclude that both $(K,v)$ and $(L,v)$ are of equicharacteristic $0$. The value groups $\Gamma_K$ and $\Gamma_L$ equal $\prod_{\cU} \Z$ in both cases. By Remark \ref{rk:henselianisfirstorder} we have that both $K$ and $L$ are Henselian. By Theorem \ref{thm:AKE} we conclude that $(K,k_K,\Gamma_K)\equiv (L,k_L,\Gamma_L)$, however by \L o\'s theorem, $(K,k_K,\Gamma_K)\models \theta$ and $(L,k_L,\Gamma_L)\models \neg \theta$, a contradiction.
\end{proof}

\begin{remark}
    The proof shows that for all ultrafilter $\cU$ on the prime numbers, we have
    \[\prod_{\cU} (\Q_p,\F_p,\Z)\equiv \prod_{\cU} (\F_p((t)),\F_p,\Z).\]

\end{remark}

Here is a direct consequence of the AKE Theorem.
\begin{corollary}
For any fields $K,L$ of characteristic $0$ we have, as rings
   \[K\equiv L\iff K[[t]]\equiv L[[t]]\]
   In particular we have $\Q^\alg[[t]]\equiv \C[[t]]$. Note however that $\Q^\alg[t]\not \equiv \C[t]$ and $\Q^\alg[[t_1,t_2]]\not \equiv \C[[t_1,t_2]]$.
\end{corollary}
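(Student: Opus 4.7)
The plan is to deduce this corollary directly from Theorem~\ref{thm:AKE} via the bi-interpretability, recalled in the introduction, between a valuation ring in $\LLr$ and the associated three-sorted valued field in $\LLts$. For any field $K$ of characteristic $0$, the formal Laurent series field $(K((t)),v_t)$ is a Henselian valued field of equicharacteristic $0$, with residue field $K$, value group $\Z$, and valuation ring $K[[t]]$. By bi-interpretability,
\[
K[[t]] \equiv L[[t]] \text{ in } \LLr \iff (K((t)), K, \Z) \equiv (L((t)), L, \Z) \text{ in } \LLts.
\]

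For the direction ($\Leftarrow$), suppose $K \equiv L$ as fields in $\LLres$. Since $\Z \equiv \Z$ trivially as ordered abelian groups, Theorem~\ref{thm:AKE} applies to the two Henselian equicharacteristic $0$ valued fields above and yields $(K((t)), K, \Z) \equiv (L((t)), L, \Z)$, whence $K[[t]] \equiv L[[t]]$. For the direction ($\Rightarrow$), the key point is that $K$ is interpretable in $K[[t]]$ as a pure ring: indeed $K[[t]]$ is a local ring, so its maximal ideal $(t) = K[[t]] \setminus K[[t]]^\times$ is definable as the set of non-units, and $K \cong K[[t]]/(t)$ is then interpretable. Hence $K[[t]] \equiv L[[t]]$ forces $K \equiv L$.

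For $\Q^\alg[[t]] \equiv \C[[t]]$, both $\Q^\alg$ and $\C$ are algebraically closed fields of characteristic $0$, so $\Q^\alg \equiv \C$ by completeness of $\ACF_0$, and the equivalence just established concludes. No serious obstacle arises: the entire argument is bookkeeping around Theorem~\ref{thm:AKE} once the bi-interpretability between $\cO$ in $\LLr$ and $(K,k,\Gamma)$ in $\LLts$ is invoked, together with the definability of the maximal ideal in a local ring. The counterexamples $\Q^\alg[t] \not\equiv \C[t]$ and $\Q^\alg[[t_1,t_2]] \not\equiv \C[[t_1,t_2]]$ quoted in the statement fall outside the scope of the corollary precisely because these rings are not valuation rings of Henselian discretely valued fields, so AKE does not transfer elementary equivalence from the coefficient field; we do not prove them here.
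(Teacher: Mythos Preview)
Your argument is correct and is exactly the route the paper intends: apply the AKE theorem to the Henselian equicharacteristic~$0$ valued fields $K((t))$ and $L((t))$, and pass between the $\LLr$-structure $K[[t]]$ and the $\LLts$-structure $(K((t)),K,\Z)$ via the bi-interpretability described in the introduction. The paper itself gives no proof beyond ``direct consequence of the AKE Theorem'', and your write-up is a faithful unpacking of that.

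One cosmetic slip: your direction labels are swapped. In the biconditional $K\equiv L \iff K[[t]]\equiv L[[t]]$, assuming $K\equiv L$ and deducing $K[[t]]\equiv L[[t]]$ is the $(\Rightarrow)$ direction, and the interpretability-of-the-residue-field argument is the $(\Leftarrow)$ direction. Just exchange the two labels and the proof is clean.
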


\subsection{Generalised series and further application}

Given a field $k$ and an ordered abelian group $\Gamma$, we now define a field $k((t^\Gamma))$ of \textit{generalised series} (or \textit{Hahn series}) with a valuation $v$ having residue field $k$ and value group $\Gamma$.

Recall that a subset $A$ of $\Gamma$ is \textit{well-ordered} if each nonempty subset of $A$ has a least element.

We define $K = k((t^\Gamma))$ to be the set of formal series 
\[f(t) = \sum_{\gamma\in \Gamma} a_\gamma t^\gamma\]
such that the support $\mathrm{supp}(f) = \set{\gamma\in \Gamma\mid a_\gamma\neq 0}$ is well-ordered. Using Exercise \ref{exo:lemmaHahnseries} the binary operations
\begin{align*}
    \sum a_{\gamma} t^\gamma + \sum b_\gamma t^\gamma &:= \sum (a_\gamma+b_\gamma)t^\gamma\\
    \left(\sum a_{\gamma} t^\gamma\right) \left(\sum b_\gamma t^\gamma\right) &:= \sum_\gamma\left(\sum_{\alpha+\beta = \gamma} a_\alpha b_\beta\right) t^\gamma
\end{align*}
are well-defined and turn $K$ into an integral domain. Further, we define a valuation on $K$:
\[v(\sum a_\gamma t^\gamma) = \min\set{\gamma\mid a_\gamma\neq 0}.
\]
\begin{theorem}
    For all $k,\Gamma$, the ring $K = k((t^\Gamma))$ is a field, $v$ is a valuation on $K$ and $(K,v)$ has residue field $k$ and value group $\Gamma$. The valuation ring is denoted $k[[t^\Gamma]]$, it consists of elements of $K$ of positive support.
\end{theorem}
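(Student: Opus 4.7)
The plan is to verify four things in order: that $K$ is a field, that $v$ is a valuation, that the value group is $\Gamma$, and that the residue field is $k$. The main obstacle by far is inverting elements, since we must control the support of the resulting series.

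First I would observe that multiplication is well-defined: for $\sum a_\gamma t^\gamma, \sum b_\gamma t^\gamma \in K$, the set of pairs $(\alpha,\beta)\in \mathrm{supp}(f)\times\mathrm{supp}(g)$ with $\alpha+\beta = \gamma$ is finite for each $\gamma$, and the set of such $\gamma$ is well-ordered. This is exactly the content of Exercise \ref{exo:lemmaHahnseries}, which I would invoke as a black box. Associativity, distributivity and the ring axioms are then routine componentwise verifications. The only nontrivial algebraic fact is the existence of multiplicative inverses, which is the crux of the proof.

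For inverses, let $f \in K$ be nonzero, let $\gamma_0 = v(f) = \min\mathrm{supp}(f)$ (this minimum exists because $\mathrm{supp}(f)$ is well-ordered and nonempty) and write
\[ f = a_{\gamma_0} t^{\gamma_0}(1 - \varepsilon), \qquad \varepsilon := 1 - a_{\gamma_0}^{-1} t^{-\gamma_0} f. \]
Then $\varepsilon \in K$ has support contained in $\Gamma^{>0}$. I would then define $g := \sum_{n \geq 0} \varepsilon^n$ formally and argue that it lies in $K$: for each fixed $\gamma \in \Gamma$, only finitely many $n$ contribute to the coefficient of $t^\gamma$ in $\varepsilon^n$ (because the support of $\varepsilon^n$ lies in the set of sums of $n$ elements of $\mathrm{supp}(\varepsilon)\subseteq \Gamma^{>0}$, and a strictly positive element summed with itself enough times exceeds any bounded target), and the union $\bigcup_n \mathrm{supp}(\varepsilon^n)$ is well-ordered (this is Neumann's lemma, again a consequence of the exercise on well-ordered subsets of $\Gamma$). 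Therefore $g \in K$ is well defined and a formal computation gives $(1-\varepsilon)g = 1$, so $f^{-1} = a_{\gamma_0}^{-1} t^{-\gamma_0} g$ exists in $K$. This shows $K$ is a field. The hardest point here is exactly the well-orderedness of $\bigcup_n \mathrm{supp}(\varepsilon^n)$ inside $\Gamma^{>0}$, which I expect to absorb from the exercise.

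Next I would check that $v:K^\times \to \Gamma$ is a valuation. For $f,g\in K^\times$ with minima $\gamma_0 = v(f)$ and $\delta_0 = v(g)$, the coefficient of $t^{\gamma_0+\delta_0}$ in $fg$ is $a_{\gamma_0}b_{\delta_0}\neq 0$, and no smaller exponent appears in $\mathrm{supp}(fg)$, so $v(fg) = v(f)+v(g)$; the ultrametric inequality $v(f+g)\geq \min(v(f),v(g))$ is immediate from the formula for addition. Since $v(t^\gamma) = \gamma$ for all $\gamma\in\Gamma$, the valuation $v$ is surjective onto $\Gamma$.

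Finally I would identify the valuation ring and residue field. By definition, $\cO = \{f \in K : v(f) \geq 0\}$ is exactly the set of series with support in $\Gamma^{\geq 0}$, i.e. $k[[t^\Gamma]]$. The maximal ideal is $\cM = \{f : v(f) > 0\}$, the series with support in $\Gamma^{>0}$. The map $\cO \to k$ sending $\sum_{\gamma\geq 0} a_\gamma t^\gamma \mapsto a_0$ is a surjective ring homomorphism with kernel $\cM$, so $\cO/\cM \cong k$. This completes all four assertions.
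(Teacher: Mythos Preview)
Your approach is exactly the one the paper intends (it defers the proof to Exercise~\ref{exo:lemmaHahnseries2}, which outlines precisely your factorisation $f = a_{\gamma_0}t^{\gamma_0}(1-\varepsilon)$ and geometric-series inversion, invoking Neumann's Lemma as a black box). One correction, though: your parenthetical justification that ``a strictly positive element summed with itself enough times exceeds any bounded target'' is false in a non-Archimedean $\Gamma$ (take $\Gamma=\Z\times\Z$ with the lexicographic order and $\delta=(0,1)$, $\gamma=(1,0)$). The finiteness of the set of $n$ contributing to a fixed coefficient is instead the second clause of Neumann's Lemma itself (only finitely many finite tuples from $\mathrm{supp}(\varepsilon)$ sum to a given $\gamma$), not a consequence of Exercise~\ref{exo:lemmaHahnseries}, which treats only binary sums. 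With that repair your argument is complete and matches the paper's.
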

\begin{proof}
    The proof of this result is mainly checking facts, and left as an exercise. The fact that $K$ is a field is a bit more involved and is detailed in Exercise \ref{exo:lemmaHahnseries2}.
\end{proof}

Some well-known facts about generalised power series, that we will not have time to prove in this course:
\begin{enumerate}
    \item If $k$ is algebraically closed and $\Gamma$ is divisible, then $k((t^\Gamma))$ is algebraically closed.
    \item $k((t^\Gamma))$ is Henselian, for all $k$ and $\Gamma$.
\end{enumerate}

This generalised series construction allows us to construct many new examples of Henselian valued field, by varying the residue field ($k = \R,\C,\F_p,\Q_p,...$) or the value group ($\Gamma = \R,\Q,\Z,\Z_p,\Z\times \Z,...$). As particular examples of generalised series, we recover the Laurent series $k((t))$, for $\Gamma = (\Z,+,<)$ and in particular our important example $\F_p((t))$ above.

\begin{corollary}\label{cor:hahnseriesAKE}
Here are some more consequences of the AKE Theorem \ref{thm:AKE} with the above facts on generalized series.
\begin{enumerate}
   \item For any henselian valued field $(K,v)$ of equicharacteristic $0$, residue field $k$ and value group $\Gamma$, we have
   \[K \equiv k((t^\Gamma))\]
   as valued fields.
   \item For any non-principal ultrafilter $\cU$ on the prime numbers, we have \[\prod_\cU \Q_p \equiv F((t))\]
   as valued fields, where $F = \prod_\cU \F_p$.
\end{enumerate}
\end{corollary}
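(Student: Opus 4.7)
The plan is to deduce both parts directly from the Ax-Kochen-Ershov Theorem~\ref{thm:AKE}, after verifying its hypotheses; there is no real obstacle here, only careful bookkeeping, since all the substantive work has been carried out (or rather, will be carried out) in the proof of AKE itself in the next section.

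For part~(1), I would first observe that $k((t^\Gamma))$ is a henselian valued field with residue field $k$ and value group $\Gamma$, by the two facts on Hahn series listed just above. Since $(K,v)$ has equicharacteristic~$0$ one has $\chara(k)=0$, so $k((t^\Gamma))\supseteq k$ also has characteristic~$0$ and is itself of equicharacteristic~$0$. Both $(K,v)$ and $k((t^\Gamma))$ are then henselian equicharacteristic~$0$ valued fields with literally the same residue field $k$ and the same value group $\Gamma$, so the hypotheses of Theorem~\ref{thm:AKE} are trivially satisfied and the conclusion $K\equiv k((t^\Gamma))$ follows.

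For part~(2), I would apply AKE to the three-sorted structures $(\prod_\cU \Q_p, F, \prod_\cU \Z)$ and $(F((t)), F, \Z)$. The ultraproduct $\prod_\cU \Q_p$ is henselian by \L o\'s's theorem combined with Remark~\ref{rk:henselianisfirstorder}, and exactly as in the proof of the Ax-Kochen principle given above, both $\prod_\cU \Q_p$ and its residue sort $\prod_\cU \F_p = F$ have characteristic~$0$: for each prime $q$, the set $\{p:\chara(\Q_p)=q\}$ is empty and $\{p:\chara(\F_p)=q\}=\{q\}$ is a singleton, so neither belongs to the non-principal ultrafilter $\cU$. Thus $\prod_\cU \Q_p$ is an equicharacteristic~$0$ henselian valued field with residue field $F$ and value group $\prod_\cU \Z$. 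On the other side, $F((t))$ is henselian by fact~(2) on Hahn series and of equicharacteristic~$0$ with residue field $F$ and value group $\Z$. The residue fields are literally equal, and one more application of \L o\'s's theorem in $\LLgp$ gives $\prod_\cU \Z \equiv \Z$ as ordered abelian groups. AKE then yields $\prod_\cU \Q_p \equiv F((t))$ as valued fields.

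The one point worth emphasizing is that AKE requires only \emph{elementary equivalence} --- not isomorphism --- of residue fields and value groups; without this flexibility the argument for part~(2) would break down, since $\prod_\cU \Z$ and $\Z$ are elementarily equivalent but certainly not isomorphic (for instance, for cardinality reasons).
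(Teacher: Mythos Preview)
Your proof is correct and follows exactly the route the paper intends: the corollary is stated without proof in the paper, and your argument --- verifying the AKE hypotheses via the listed facts on Hahn series for part~(1), and via \L o\'s's theorem (for henselianity, equicharacteristic~$0$, and $\prod_\cU \Z \equiv \Z$) for part~(2) --- is the natural and implicit justification. Your final remark on elementary equivalence versus isomorphism of the value groups is a nice point to make explicit.
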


Looking back at the tree representations of the valuation rings $\Z_p$ and $\F_p[[t]]$, one can also represent elements of $k[[t^\Gamma]]$ as branches of a tree. The tree itself might bit more abstract (e.g. if the value group is dense) but the tree representation still makes sense. In $\Z_p$ or $\F_p[[t]]$ the ``branching" of two elements $a$ and $b$ is at the level $v(a-b)$. One has to take into account that for an arbitrary $\Gamma$ and $a,b\in k[[t^\Gamma]]$, the ``branching" of $a$ and $b$ might not be an identified point. For instance assume that the support of $a$ and $b$ is $\gamma_{0}<\gamma_1<\ldots<\gamma_{\omega}$ ordered as the ordinal $\omega+1$\footnote{Take for instance $\Gamma = \Q$, $\gamma_i = \sum_{j=0}^i 2^{-j}$ for $i<\omega$ and $\gamma_{\omega} = 2$.} and assume that $a_{\gamma_i} = b_{\gamma_i}$ for $i<\omega$ and $a_\omega \neq b_\omega$. There is no branching point between $a$ and $b$ but the valuation of $a-b$ (namely $\gamma_\omega$) is very close to where the branching point should be. In effect, taking arbitrary elements $a$ and $b$, the valuation of $a-b$ is 
\[\sup \set{\gamma\in A\mid a_\alpha = b_\alpha \text{ for all $\alpha<\gamma$, $\alpha\in A$}}\]
where $A$ is the (well ordered) union of $\mathrm{supp}(a)$ and $\mathrm{supp}(b)$. In our representation of valued fields as trees, we will identify the (possibly imaginary) branching point of $a$ and $b$ with the valuation of $a-b$.


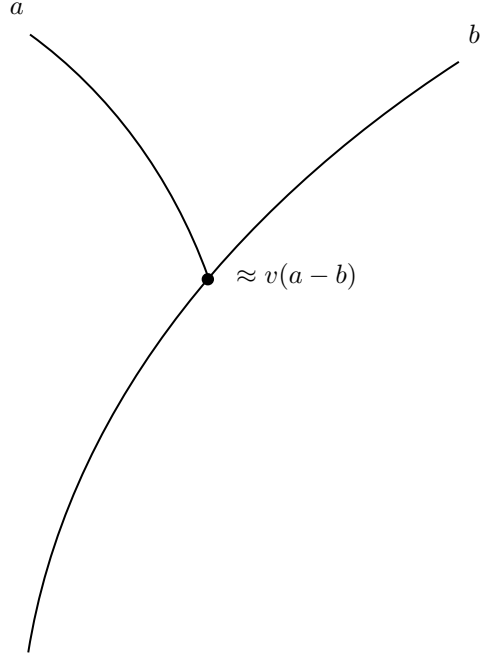
\begin{figure}
    \centering
    \begin{tikzpicture}[x=0.75pt,y=0.75pt,yscale=-1,xscale=1]

\draw  [draw opacity=0] (307.67,398.79) .. controls (326.15,282.62) and (405.04,178.69) .. (522.63,101.71) -- (1022,451.5) -- cycle ; \draw   (307.67,398.79) .. controls (326.15,282.62) and (405.04,178.69) .. (522.63,101.71) ;  
\draw  [draw opacity=0] (308.54,87.98) .. controls (348.02,116.86) and (379.26,159.26) .. (397.31,209.49) -- (185.75,309.5) -- cycle ; \draw   (308.54,87.98) .. controls (348.02,116.86) and (379.26,159.26) .. (397.31,209.49) ;  
\draw  [line width=2.25]  (395.73,211.08) .. controls (395.73,210.21) and (396.44,209.49) .. (397.31,209.49) .. controls (398.19,209.49) and (398.9,210.21) .. (398.9,211.08) .. controls (398.9,211.96) and (398.19,212.67) .. (397.31,212.67) .. controls (396.44,212.67) and (395.73,211.96) .. (395.73,211.08) -- cycle ;

\draw (297,70.4) node [anchor=north west][inner sep=0.75pt]    {$a$};
\draw (526,81.4) node [anchor=north west][inner sep=0.75pt]    {$b$};
\draw (410,201.4) node [anchor=north west][inner sep=0.75pt]    {$\approx v( a-b)$};

\end{tikzpicture}
    \caption{Meet points are valuations}
    \label{fig:enter-label}
\end{figure}

\begin{exercise}\label{exo:lemmaHahnseries}
    Let $A,B$ be well-ordered subsets of $\Gamma$. Prove the following:
    \begin{enumerate}
        \item $A\cup B$ is well-ordered;
        \item $A+B = \set{\alpha+\beta\mid\alpha\in A,\beta\in B}$ is well-ordered;
        \item For each $\gamma\in \Gamma$ there are only finitely many pairs $(\alpha,\beta)\in A\times B$ such that $\gamma = \alpha+\beta$.
    \end{enumerate}
\end{exercise}

\begin{exercise}\label{exo:lemmaHahnseries2}
    We prove that $K = k((t^\Gamma))$ is a field. We assume the following:\\
    \textit{(Neumann's Lemma)} Let $A$ be a well-ordered subset of $\Gamma$. Then 
        \[\set{\alpha_1+\ldots+\alpha_n\mid \alpha_i\in A, n\in \N}\]
        is well-ordered and for all $\gamma\in \Gamma$ there are only a finite number of elements of $A$ whose sum equals $\Gamma$.
    \begin{enumerate}
        \item Let $f\in K$ with $v(f)>0$. Prove that $\sum_{n = 0}^\infty f^n\in K$ and that $(1-f)\sum_{n = 0}^\infty f^n = 1$.
        \item Prove that for any $g\in K\setminus \set{0}$ there exists $c\in k$, $\gamma\in \Gamma$ and $f$ with $v(f)>0$ such that $g = ct^\gamma(1-f)$. 
        \item Conclude.
    \end{enumerate}
\end{exercise}

\section{The theorem of Pas}\label{sec:pastheorem}

\subsection{Angular component map} In a valued field $(K,v)$ note that $\res_{\upharpoonright\cO^\times}: \cO^\times\to k^\times$ is a multiplicative group homomorphism. An angular component map on $(K,v)$ is an extension of this homomorphism to the supergroup $K^\times$ of $\cO^\times$.

\begin{definition}
    Given a valued field $(K,v)$ with residue field $k$. An \textit{angular component map} is a map $\ac: K\to k$ such that
    \[\begin{cases}
        \ac(a) = 0 \iff a = 0\\
        \ac:K^\times\to k^\times \text{ is a multiplicative group homomorphism}\\
        \ac(a) = \res(a) \text{ whenever $v(a) = 0$}
    \end{cases}\]
    A valued field equipped with an angular component map is called an \textit{ac-valued field}.
\end{definition}
The first two conditions imply that $\ac$ is a \textit{multiplicative map} i.e. $\ac(ab) = \ac(a)\ac(b)$ for all $a,b\in K$.

\begin{example}
    Main examples of angular component maps:
    \begin{itemize}
        \item (In $\Q_p$.) Let  $f = \sum_{i\geq i_0} a_ip^i$ with $a_{i_0}\neq 0$. Then we define $\ac(f) = a_{i_0}= a_{v_p(f)}$.
        \item (In $k((t^\Gamma))$.) Let  $f = \sum_{i\geq i_0} a_it^i$ with $a_{i_0}\neq 0$. Then we define $\ac(f) = a_{i_0}= a_{v_t(f)}$.
    \end{itemize}
\end{example}

There are examples\footnote{An example is given in \cite{Pas90}.} of valued field which do not have an angular component map, however every valued field has an elementary extension with an angular component map.

Recall that an abelian group $A$ is \textit{pure injective} if for all abelian groups $B,C$ where $B$ is a pure subgroup in $C$ (i.e. $B = \set{c\in C\mid c^n\in B \text{ for some $n\in \N$}}$) any homomorphism $B\to A$ extends to a homomorphism $C\to A$. The following is a classical fact in model theory of groups, see e.g. \cite[Theorem 20, p. 171]{cherlin}.
\begin{fact}\label{fact:cherlin}
    Every $\aleph_1$-saturated abelian group is pure injective.
\end{fact}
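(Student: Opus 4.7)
The plan is to prove this via the standard characterization of pure injectivity as algebraic compactness, combined with the observation that $\aleph_1$-saturation realizes all finitely satisfiable countable types.

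First, I would recall the Warfield--Mycielski equivalence: an abelian group $A$ is pure injective if and only if $A$ is \emph{algebraically compact}, meaning that every system of linear equations with parameters in $A$ that is finitely solvable in $A$ has a simultaneous solution in $A$. Granting this, it suffices to establish algebraic compactness. By a Zorn's lemma argument, the extension problem $\phi : B \to A$ with $B$ pure in $C$ reduces to the one-element extension case: given any $c \in C \setminus B$, we need to extend $\phi$ to $\langle B, c \rangle$ and then iterate.

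The one-element step is where $\aleph_1$-saturation enters. The constraints on the would-be value $a := \phi(c)$ form the type
\[
p(x) \;=\; \{\, n x = \phi(n c) \,:\, n \in \Z,\; n c \in B \,\},
\]
which involves only countably many equations and countably many parameters from $A$. Finite satisfiability of $p(x)$ in $A$ follows from purity: given $n_1 x = \phi(n_1 c),\ldots, n_k x = \phi(n_k c)$, let $d = \gcd(n_1,\ldots,n_k)$; since $d c$ is a $\Z$-linear combination of the $n_i c$ and $\phi$ is a homomorphism, the whole finite system collapses to the single equation $d x = \phi(d c)$, and purity of $B$ in $C$ yields $d c = d b$ for some $b \in B$, whence $x := \phi(b)$ solves the reduced equation in $A$. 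By $\aleph_1$-saturation, $p(x)$ is realized by some $a \in A$, and setting $\phi(c) := a$ defines a group homomorphism extending $\phi$ to $\langle B, c \rangle$.

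The main obstacle is iterating this construction transfinitely along an enumeration of $C \setminus B$: at successor stages, the intermediate group $B_\alpha \subseteq C$ obtained so far need not itself be pure in $C$, so the argument of the previous paragraph cannot be applied naively. The cleanest way around this is to reformulate the whole argument in terms of positive primitive (pp) formulas --- the pp-type of $c$ over $B$ in $C$ lifts to a finitely satisfiable pp-type over $\phi(B)$ in $A$, which is countable and hence realized by $\aleph_1$-saturation, bypassing the need to track purity of intermediate subgroups. Alternatively, one can sidestep the induction by showing directly that the full system of equations needed to define the extension on all of $C$ is finitely satisfiable in $A$ using purity of the \emph{original} pair $(B, C)$ alone, and then appeal to algebraic compactness of $A$ globally --- this is the route taken in Cherlin's book, where algebraic compactness for all systems (not merely countable ones) is derived from $\aleph_1$-saturation by a careful reduction to countable subsystems.
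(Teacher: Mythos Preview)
The paper does not prove this fact; it merely cites Cherlin's book, so there is no in-paper argument to compare your attempt against.

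Your sketch follows the standard route and is correct as far as it goes: the one-element extension step is fine, and you rightly flag that naive Zorn iteration fails because the intermediate subgroups need not remain pure in $C$. Both fixes you propose are the right ideas. The piece that makes $\aleph_1$-saturation (rather than full saturation) suffice --- which you allude to but do not carry out --- is that over $\Z$ there are only countably many pp-formulas in one free variable, and each instance with parameters cuts out a coset of a fixed pp-definable subgroup; a consistent pp-$1$-type therefore selects at most one coset per formula and is equivalent to a type over a countable parameter set. Once that lemma is in hand, a transfinite induction on the variables of the system goes through without tracking purity of intermediate groups. So your outline is sound; what is missing is just making this countability reduction explicit.
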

\begin{proposition}\label{prop:acsaturated}
    Let $(K,k,\Gamma)$ be an $\aleph_1$-saturated valued field. Then there exists an angular component map $\ac:K\to k$.
\end{proposition}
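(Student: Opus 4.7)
The plan is to produce $\ac$ by extending the multiplicative group homomorphism $\res\upharpoonright\cO^\times : \cO^\times \to k^\times$ to a group homomorphism $\tilde{\ac} : K^\times \to k^\times$ by applying Fact \ref{fact:cherlin} to $k^\times$, and then setting $\ac(0) = 0$. This reduces the problem to two verifications: pure injectivity of $k^\times$, and purity of $\cO^\times$ inside $K^\times$ in the sense defined above.

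For pure injectivity of $k^\times$, the $\aleph_1$-saturation of the $\LLts$-structure $(K,k,\Gamma)$ transfers to the sort $k$ equipped with its induced structure, and then to the reduct of $k^\times$ to the pure multiplicative abelian group language; this makes $k^\times$ an $\aleph_1$-saturated abelian group, so Fact \ref{fact:cherlin} applies. For purity of $\cO^\times$ in $K^\times$, the inclusion $\cO^\times \subseteq \{a \in K^\times : a^n \in \cO^\times \text{ for some } n \geq 1\}$ is immediate, while conversely, if $a^n \in \cO^\times$ then $n\cdot v(a) = v(a^n) = 0$ in $\Gamma$, and since $\Gamma$ is torsion-free (as an ordered abelian group) this forces $v(a) = 0$ and hence $a \in \cO^\times$.

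With both ingredients in hand, Fact \ref{fact:cherlin} supplies a group homomorphism $\tilde{\ac} : K^\times \to k^\times$ extending $\res\upharpoonright\cO^\times$; setting $\ac(0) := 0$ and $\ac(a) := \tilde{\ac}(a)$ for $a \neq 0$ then produces the angular component map. The three defining conditions are immediate: $\ac(a) = 0 \iff a = 0$ by construction since $\tilde{\ac}$ takes values in $k^\times$; multiplicativity on $K^\times$ is the statement that $\tilde{\ac}$ is a group homomorphism; and the identity $\ac(a) = \res(a)$ whenever $v(a) = 0$ is precisely the condition that $\tilde{\ac}$ extends $\res\upharpoonright\cO^\times$, since $\cO^\times = \{a \in K : v(a) = 0\}$. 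The conceptual heavy lifting is entirely packaged in Fact \ref{fact:cherlin}; the only step that could present any friction is the purity verification, which collapses to torsion-freeness of the value group.
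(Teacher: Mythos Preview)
Your proof is correct and takes essentially the same approach as the paper's: both reduce to applying Fact~\ref{fact:cherlin} to the $\aleph_1$-saturated group $k^\times$, verifying that $\cO^\times$ is pure in $K^\times$ via torsion-freeness of $\Gamma$. You have simply spelled out in more detail the transfer of saturation to $k^\times$ and the verification of the three defining conditions of an angular component map, which the paper leaves implicit.
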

\begin{proof}
    We want to find an extension of $\res_{\upharpoonright \cO^\times} : \cO^\times\to k^\times$ to $K^\times$. By $\aleph_1$-saturation of $(K,k,\Gamma)$ (actually of the group $(k^\times,\cdot)$) and Fact \ref{fact:cherlin} it suffices to prove that $\cO^\times$ is pure in $K^\times$. If $a\in K^\times$ is such that $a^n\in \cO^\times$, then $v(a^n) = nv(a) = 0$ hence $v(a) = 0$ i.e. $a\in \cO^\times$.
\end{proof}

\subsection{The language of Denef-Pas and Pas Theorem}

We introduce the three-sorted language of Denef and Pas to deal  with ac-valued fields

\begin{definition}
    Let $\LLdp$ be the three sorted language defined by:
\begin{itemize}
    \item one sort for the valued field $K$ in the language of rings $\LLvf = \set{+,-,\cdot,0,1}$ (the \textit{valued field sort})
    \item one sort for the residue field $k$ in a different copy $\LLres = \set{+,-,\cdot,0,1}$ of the language of fields (the \textit{residu field sort})
    \item one sort for the value group $\Gamma$ in the language of ordered groups expanded by a constant $\LLgp = \set{+,-<,0,\infty}$ (the \textit{value group sort})
    \item a function symbol $v:K\to \Gamma\cup \set{\infty}$ for the valuation
    \item a function symbol $\ac: K\to k$ for the angular component map.
\end{itemize}
Any $\LLdp$-structure is given by a tuple $(K,k,\Gamma,v,\ac)$ with the following maps between the three sorts:
\begin{center} 
\begin{tikzcd}
K \ar[r,"v"] \ar[d,"\ac"] & \Gamma\cup\set{\infty}\\
k & 
\end{tikzcd}\\
\end{center}
\end{definition}

Note that the language $\LLdp$ is countable, in the sense that the number of $\LLdp$-formulas is countable.

\begin{definition}
    Let $T_0^\DP$ be the $\LL_\DP$-theory expressing the following for any model $(K,k,\Gamma,v,\ac)$:
\begin{itemize}
    \item $(K,v)$ is a valued field with value group $\Gamma$ (i.e. $v(K^\times) = \Gamma$)
    \item $(K,v)$ is Henselian of equicharacteristic $(0,0)$.
    \item $\ac:K\to k$ is an angular component map for the valued field $(K,v)$ (i.e. $\ac:K^\times\to k^\times$ is a group homomorphism, $\ac(a) = 0$ iff $a = 0$) and the residue map $\res:\cO\to k$ associated to $v$ is onto and coincide with $\ac$ on the set $\cO^\times = \set{a\in K\mid v(a) = 0}$\footnote{To express this: define $\res:\cO \to k$ by cases:
    \[\res(a) = \begin{cases} \ac(a) & \text{if } v(a) = 0\\ 0 & \text{if }v(a)>0\end{cases}\]
    and ask that $\res$ is a ring homomorphism which is surjective and with kernel $\cM$.}).
\end{itemize}
\end{definition}

\begin{definition}
    Let $\Tres$ be a theory of fields in $\LLres$ and $\Tgp$ a theory of ordered abelian group in $\LLgp$ we define $T^\DP = T^\DP(\Tres,\Tgp)$ to be the expansion of $T^\DP_0$ obtained by adding $\Tres$ in $\LLres$ to the residue field sort and $\Tgp$ in $\LLgp$ to the value group sort.
\end{definition}

For a field $k$ and an ordered abelian group $\Gamma$, we will also consider $T^\DP = T^\DP(\Th(k),\Th(\Gamma))$.

In 1989, Johan Pas \cite{Pas89} proves :
\begin{theorem}[Johan Pas]\label{thm:pas}
    For any complete theory $\Tres$ of field in $\LLres$ and for any complete theory $\Tgp$ of ordered abelian group in $\LLgp$, the theory $\TDP = \TDP(\Tres,\Tgp)$ is complete and eliminates the fields quantifiers. This means that for any $\LLdp$-formula $\phi(x,\xi, \bar u)$ there exist an $\LLdp$-formula $\psi(x,\xi,\bar u)$ where the quantifiers $\forall,\exists$ are only over variables from $\LLres$ and $\LLgp$, such that 
    \[\TDP \models \forall x \xi\bar u \left[\phi(x,\xi,\bar u)\leftrightarrow \psi(x,\xi,\bar u) \right].\]
\end{theorem}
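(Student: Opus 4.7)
The plan is to prove elimination of valued-field quantifiers by a back-and-forth argument between two $\kappa$-saturated models $\mathcal{M}_1,\mathcal{M}_2 \models \TDP$ for $\kappa$ uncountable; completeness then follows, since modulo elimination every $\LLdp$-sentence becomes a Boolean combination of sentences in the residue and value-group sorts, whose truth is pinned down by the completeness of $\Tres$ and $\Tgp$. The substructures relevant for the back-and-forth are \emph{good triples} $\mathcal{E} = (E, k_E, \Gamma_E)$ where $E$ is a subfield of the valued-field sort, $k_E$ is a subfield of the residue sort containing $\ac(E)$, and $\Gamma_E$ is an ordered subgroup of the value-group sort containing $v(E^\times)$. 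The task reduces to the standard extension lemma: any partial isomorphism $\sigma\colon \mathcal{E}_1 \to \mathcal{E}_2$ of cardinality $< \kappa$ extends to one whose domain contains any prescribed element $a$ of $\mathcal{M}_1$, in any of the three sorts.

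For $a$ in the residue-field sort, I would use $\aleph_1$-saturation of $\mathcal{M}_2$ and completeness of $\Tres$ to realize $\sigma_*(\tp(a / k_E))$ in $k_2$; the case where $a$ lies in the value-group sort is symmetric, using $\Tgp$. The essential work is for $a \in K_1 \setminus E$, which I would split into subcases arranged to progressively reduce to the hardest one. If $v(a) \notin \Gamma_E$, I first extend $\Gamma_E$ to include $v(a)$ (using the value-group case), then pick any element of $K_2$ with matching value via saturation. If $v(a) \in \Gamma_E$ but no $e \in E$ with $v(e) = v(a)$ has $\ac(e) = \ac(a)$, I first enlarge $k_E$ by $\ac(a)$. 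After these reductions, the $v$- and $\ac$-data of $a$ are accounted for in $\mathcal{E}$. If $a$ is algebraic over $E$, the extension is forced by Henselianity: $a$ lies in the Henselization $E^h \subseteq K_1$, which embeds uniquely into $K_2$ over $\sigma(E)$ because $K_2$ is itself Henselian and the simple zero lift makes $E^h$ the minimal Henselian algebraic extension of $E$.

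The remaining case---$a$ transcendental over $E$ with all of its $v$- and $\ac$-data already inside $\mathcal{E}$---is the main obstacle. Here $a$ is necessarily a pseudo-limit of a pseudo-Cauchy sequence $(a_\iota)$ from $E$ of transcendental type; by $\kappa$-saturation the transferred sequence $(\sigma(a_\iota))$ admits a pseudo-limit $a' \in K_2$. The strategy is then to prove, by induction on $\deg P$, that for every $P \in E[X]$ one has $v(P(a)) = v(\sigma(P)(a'))$ and $\ac(P(a)) = \ac(\sigma(P)(a'))$: this is possible because in equicharacteristic $0$ there is no defect and the $v$- and $\ac$-values of $P(a)$ are computed from sufficiently advanced terms of the pseudo-Cauchy sequence, so they are determined by $\sigma$ and the choice of $a'$. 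This is essentially the content of Kaplansky's theorem adapted to ac-valued fields, and the delicate point is not just existence of $a'$ but verifying that the resulting field isomorphism $E(a) \cong \sigma(E)(a')$ preserves $\ac$ uniformly---the one feature that distinguishes the $\LLdp$ setting from the bare valued-field language and which is what makes full elimination of field quantifiers possible. Once the extension lemma is secured, iterating yields the back-and-forth and hence $\mathcal{M}_1 \equiv_{\LLdp} \mathcal{M}_2$, giving both quantifier elimination and completeness of $\TDP(\Tres, \Tgp)$.
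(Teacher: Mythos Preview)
Your overall architecture is right---back-and-forth between saturated models, Henselization for the algebraic case, Kaplansky's pseudo-limit machinery for the transcendental immediate case---but there is a genuine gap in your case reduction, and it is exactly where the paper spends most of its effort.

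You conflate the \emph{sort} $\Gamma_E$ with the \emph{actual} value group $v(E^\times)$ of the field $E$, and likewise $k_E$ with $\res(\cO_E)$. In a good triple these containments may be strict. Your reduction ``if $v(a)\notin\Gamma_E$ extend $\Gamma_E$; if the $\ac$-data is missing enlarge $k_E$'' only adjusts the sorts, not the field $E$, so after your reductions you have $v(a)\in\Gamma_E$ and $\ac(a)\in k_E$ but \emph{not} that $E(a)/E$ is immediate. Concretely: take $E=\Q$ with the trivial valuation inside $K=\C((t))$, set $\Gamma_E=\Z$ and $k_E=\C$, and let $a=t$. Then $v(a)=1\in\Gamma_E$ and $\ac(a)=1\in k_E$, yet $E(a)/E$ is as far from immediate as possible (the value group jumps from $\{0\}$ to $\Z$), and there is no pseudo-Cauchy sequence in $E$ with $a$ as pseudo-limit. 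Your claim ``here $a$ is necessarily a pseudo-limit of a pc-sequence from $E$ of transcendental type'' simply fails.

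What is missing is the paper's Steps~2 and~3: before invoking Kaplansky, one must enlarge the \emph{field} $E$ so that $\res:\cO_E\to k_E$ and $v:E^\times\to\Gamma_E$ become surjective. Each step splits into two subcases and neither is automatic. For the residue side, when the new residue element is algebraic over $\res(\cO_E)$ one must use the simple-zero lift to find a preimage in $E$ whose image in the other model is forced; the transcendental subcase is easier but still requires Corollary~\ref{cor:3.7bestextension}(1) to check that the valuation (and then $\ac$, via Remark~\ref{rk:productofelementsunramified}) is preserved. For the value-group side, when the new $\gamma$ is torsion modulo $v(E^\times)$ one cannot ``pick any element of $K_2$ with matching value'': one needs Corollary~\ref{cor:henseliannthroots} to produce $a$ with $a^n\in E$, and then a further adjustment on the target side to match both $a^n$ and $\ac(a)$ simultaneously. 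Only after these steps is every further extension of $E$ inside the ambient model genuinely immediate, so that Proposition~\ref{prop:immediateextensionpseudocauchylimit} and Theorem~\ref{thm:uniquevaluationpctranscendental} apply. Your sketch of the final transcendental step is fine in spirit (and the ``induction on $\deg P$'' can be replaced by the cleaner observation that $v(P(a))$ is the eventual value of $v(P(a_i))$, while $\ac$ is handled by writing every element of $E(a)$ as a unit times an element of $E$), but it is not reachable without first carrying out the surjectivity steps.
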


\begin{proof}[Proof of the AKE Theorem \ref{thm:AKE} from Theorem \ref{thm:pas}]
    Let $(K,k_K,\Gamma_K)$ and $(L,k_L,\Gamma_L)$ be two valued fields in the three-sorted language $\LLts$ which are Henselian and of equicharacteristic $0$. We need to prove that $(K,k_K,\Gamma_K)\equiv (L,k_L,\Gamma_L)$ in $\LLts$ if and only if
$k_K\equiv k_L$ in $\LLr$ and $\Gamma_K\equiv \Gamma_L$ in $\LLgp$. The 'only if' direction is clear. We prove the 'if' direction. Assume that $k_K\equiv k_L$ and $\Gamma_K\equiv \Gamma_L$. First, consider $(K^*,k_K^*,\Gamma_K^*)$ and $(L^*,k_L^*,\Gamma_L^*)$ two $\aleph_1$-saturated elementary extensions (as $\LLts$ valued fields) of $(K,k_K,\Gamma_K)$ and $(L,k_L,\Gamma_L)$ respectively. By Proposition \ref{prop:acsaturated}, there exists angular component maps $\ac_{K^*}:K^*\to k_K^*$ and $\ac_{L^*}:L^*\to k_L^*$ so that we may consider $(K^*,k_K^*,\Gamma_K^*)$ and $(L^*,k_L^*,\Gamma_L^*)$ as $\LLdp$-structures. Note that $\res$ is always onto the residue field so that $\ac_{K^*}$ and $\ac_{L^*}$ are onto. It follows from the hypotheses that $(K^*,k_K^*,\Gamma_K^*)$ and $(L^*,k_L^*,\Gamma_L^*)$ are models of $\TDP$ for $\TDP = \TDP(\Th(k_K),\Th(\Gamma_K))$. By Theorem \ref{thm:pas}, $\TDP$ is complete, hence $(K^*,k_K^*,\Gamma_K^*)\equiv (L^*,k_L^*,\Gamma_L^*)$ as $\LLdp$ valued fields. In particular, $(K^*,k_K^*,\Gamma_K^*)\equiv (L^*,k_L^*,\Gamma_L^*)$ as $\LLts$ valued fields. As $(K,k_K,\Gamma_K)\equiv (K^*,k_K^*,\Gamma_K^*)$ and $(L,k_L,\Gamma_L)\equiv (L^*,k_L^*,\Gamma_L^*)$ as $\LLts$ valued fields, we conclude $(K,k_K,\Gamma_K)\equiv (L,k_L,\Gamma_L)$.
\end{proof}

\section{Proof of Pas theorem}

\subsection{Algebraic preliminaries}

Recall that a valuation satisfies $v(a+b)\geq \min\set{v(a),v(b)}$. The ambiguity really comes when $v(a) = v(b)$, since we have the following:
\[v(a)<v(b)\implies v(a+b) = v(a)\]
Indeed, assume that $v(a)<v(b)$, then $v(a) = v(a+b - b)\geq \min \set{v(a+b),v(b)}$ (as $v(b) = v(-b)$ by Exercise \ref{exo:valdebase1}). Since $v(a)<v(b)$ it must be that $v(a)\geq v(a+b)$. On the other hand we have $v(a+b)\geq \min\set{v(a),v(b)} = v(a)$

\begin{exercise}\label{exo:valdebase1}
Let $(K,v)$ be a valued field.
    Prove the following:
    \begin{enumerate}
        \item $v(1) = v(-1) = 0$
        \item $v(a) = v(-a)$
    \item If $v(a_1+\ldots+a_n)>\min\set{v(a_i)}$ then there exists $i\neq j$ such that $v(a_i) = v(a_j)$.
    \item If $(a_1,\ldots,a_n)\neq \vec 0$ and $\sum_{i = 0}^n a_i = 0$ then there exists $i\neq j$ such that $v(a_i) = v(a_j)$.
\end{enumerate}
\end{exercise}

The following is a key lemma to understand how valuations extend to field extensions.

\begin{lemma}
Let $(L,w)$ be an extension of the valued field $(K,v)$. Let 
\begin{itemize}
    \item $a_1,\ldots,a_r\in \cO_w$ such that $\res(a_1),\ldots, \res(a_r)\in k_L$ are $k_K$-linearly independent;
    \item $b_1,\ldots, b_s\in L^\times$ such that $w(b_1),\ldots,w(b_s)$ are in different classes modulo $\Gamma_K$;
    \item $\set{0}\neq \set{c_{i,j}\mid 1\leq i\leq r, 1\leq j\leq s}\seq K$
\end{itemize}
Then 
\[w(\sum_{i,j} c_{i,j}a_ib_j) = \min_{i,j} \set{w(c_{i,j}a_ib_j)} = \min_{i,j}\set{v(c_{i,j})+w(b_j)}\]
In particular, $(a_ib_j)_{i,j}$ are $K$-linearly independent.
\end{lemma}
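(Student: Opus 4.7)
First I would reduce the statement to its natural two parts. Since $\res(a_i)\neq 0$ we have $w(a_i)=0$, and since $c_{i,j}\in K$ we have $w(c_{i,j})=v(c_{i,j})$; hence $w(c_{i,j}a_ib_j)=v(c_{i,j})+w(b_j)$, which already gives the second equality. The inequality $w(\sum_{i,j}c_{i,j}a_ib_j)\geq \min_{i,j}\{v(c_{i,j})+w(b_j)\}$ is immediate from the ultrametric inequality, so the content of the lemma is the reverse inequality, which I would prove in two stages corresponding to the two types of hypotheses (residues $k_K$-linearly independent; valuations lying in distinct $\Gamma_K$-cosets).

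Stage one. For each fixed $j$, set $f_j := \sum_i c_{i,j}a_i$ and $\gamma_j := \min_i v(c_{i,j})\in \Gamma_K\cup\{\infty\}$. I claim $w(f_j)=\gamma_j$. If $\gamma_j=\infty$, then $f_j=0$ and there is nothing to show. Otherwise pick $i_0$ with $v(c_{i_0,j})=\gamma_j$ and factor: $f_j = c_{i_0,j}\sum_i d_{i,j}a_i$ with $d_{i,j}=c_{i,j}/c_{i_0,j}\in \cO_v$ and $d_{i_0,j}=1$. The sum $g_j := \sum_i d_{i,j}a_i$ lies in $\cO_w$ and
\[\res(g_j)=\sum_i \res(d_{i,j})\,\res(a_i)\in k_L,\]
where $\res(d_{i,j})\in k_K$ and $\res(d_{i_0,j})=1$. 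By the $k_K$-linear independence of $\res(a_1),\ldots,\res(a_r)$ this is a nonzero element of $k_L$, so $w(g_j)=0$ and therefore $w(f_j)=v(c_{i_0,j})=\gamma_j$.

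Stage two. Now write $\sum_{i,j}c_{i,j}a_ib_j=\sum_j b_j f_j$, so $w(b_jf_j)=w(b_j)+\gamma_j$ for every $j$ with $f_j\neq 0$. Since $\gamma_j\in \Gamma_K$, the valuations $w(b_j)+\gamma_j$ for the nonzero-$f_j$ indices $j$ lie in distinct cosets of $\Gamma_K$ in $\Gamma_L$ (because the $w(b_j)$ do); in particular they are pairwise distinct elements of $\Gamma_L$. By the standard valued-field fact (e.g.\ Exercise \ref{exo:valdebase1}(4)) that a sum of finitely many elements with pairwise distinct valuations has valuation equal to the minimum of those valuations, we conclude
\[w\Bigl(\sum_j b_j f_j\Bigr)=\min_j\bigl(w(b_j)+\gamma_j\bigr)=\min_{i,j}\bigl(v(c_{i,j})+w(b_j)\bigr).\]
As the hypothesis guarantees that not all $c_{i,j}$ vanish, at least one $\gamma_j$ is finite so the minimum is finite, and in particular $\sum_{i,j}c_{i,j}a_ib_j\neq 0$; this is exactly the linear independence of $(a_ib_j)_{i,j}$ over $K$.

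I expect the only subtle step to be stage one, where one must resist the temptation to divide by a valuation and instead exploit the specific hypothesis: factoring by some $c_{i_0,j}$ of minimal valuation keeps the $d_{i,j}$'s in $\cO_v$ \emph{and} keeps their residues in $k_K$, which is exactly what is needed to apply the $k_K$-linear independence of the $\res(a_i)$. Everything else is bookkeeping with the ultrametric inequality and the partition of $\Gamma_L$ into $\Gamma_K$-cosets.
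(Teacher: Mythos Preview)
Your proof is correct and follows essentially the same strategy as the paper's: factor out a coefficient of minimal valuation so that the remaining coefficients lie in $\cO_v$, apply the $k_K$-linear independence of the $\res(a_i)$ to see the resulting element has valuation $0$, and use the distinct-coset hypothesis to handle the $b_j$'s. The only organizational difference is that you first compute $w(f_j)$ for each $j$ and then combine over $j$, whereas the paper identifies the global minimum $\gamma$, restricts to the index set $I$ achieving it, and observes directly that a single $j_0$ occurs in $I$; the underlying ideas are identical.
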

\begin{proof}
    First, as $\res(a_i)$ are nonzero, we have $v(a_i) = 0$ for all $i\leq r$. Let $\gamma = \min\set{v(c_{i,j})+w(b_j)\mid i,j}$ and $I = \set{(i,j)\mid v(c_{i,j})+w(b_j) = \gamma}$. As $w(\sum_{(i,j)\notin I}c_{i,j}a_ib_j)\geq \min\set{v(c_{i,j})+w(b_j)\mid (i,j)\notin I}>\gamma$, it is enough to show that $w(\sum_{(i,j)\in I} c_{i,j}a_ib_j) = \gamma$. Observe that there exists $j_0$ such that for all $(i,j)\in I$ we have $j = j_0$: otherwise $v(c_{i,j})+v(b_j) = v(c_{i',j'})+w(b_{j'})$ for $j\neq j'$ which contradicts that $w(b_j)$ and $w(b_j')$ are in different cosets modulo $\Gamma_K$. In particular, $v(c_{i,j}) = v(c_{i',j_0})$. Fix $(i_0,j_0)\in I$. Then 
    \[ \frac{1}{c_{i_0,j}b_j} \sum_{(i,j)\in I} c_{i,j}a_ib_j = \sum_{(i,j)\in I} \frac{c_{i,j}}{c_{i_0,j}} a_i = u\]
    It remains to prove that $v(u) = 0$ or equivalently $\res(u)\neq 0$ (as $v(c_{i,j}/c_{i_0,j})\geq 0$) which follows from $\res(a_1)\ldots \res(a_r)$ being linearly independent over $k_K$. The `in particular' part is immediate: $b_i$ are nonzero hence $w(b_i)\neq \infty$ so if $\sum_{i,j} c_{i,j}a_ib_j = 0$ with $(c_{i,j})_{i,j}\neq \vec 0$ then valuation is $\infty$ and so is $\min\set{v(c_{i,j})+v(b_j)}$, a contradiction.
\end{proof}

\begin{remark}
    Let $(K,v)\seq (L,w)$ be a valued fields extension with $L$ a finite field extension of $K$. Then 
    \[[L:K]\geq [k_L:k_K][\Gamma_L:\Gamma_K]\]
    To see this, take $(a_i)_{i\in I}$ such that $(\res(a_i))$ are $k_K$-independent and $(b_j)_{j\in J}$ with $(v(b_j))$ in different cosets modulo $\Gamma_K$, then $\set{a_ib_j\mid i,j}$ are $K$-linearly independent so that $[L:K]\geq \abs{\set{a_ib_j\mid i,j}} = \abs{I\times J}$.
\end{remark}

 When we consider a valued field extension $(K,v)\seq (L,w)$, we have that $w\upharpoonright K = v$ hence for now on we will write $(K,v)\seq (L,v)$.

\begin{corollary}\label{cor:3.7bestextension}
    Let $(K,v)\seq (L,v)$ be a valued fields extension.
    \begin{enumerate}
        \item Let $a\in L$ be such that $1,\res(a),\ldots, \res(a^{n})$ are linearly independent over $k_K$. Then for all $c_0,\ldots,c_n\in K$ we have 
        \[v(\sum_i c_ia^i) = \min_i\set{v(c_i)}\]
        In particular $v(K+Ka+\ldots+Ka^n)\seq \Gamma_K\cup\set{\infty}$.
        \item Let $a\in L$ be such that $0,v(a),\ldots, v(a^n)\in \Gamma_L$ are in different classes modulo $\Gamma_K$. Then for all $c_0,\ldots,c_n\in K$ we have 
        \[v(\sum_ic_i a^i) = \min_i\set{v(c_i)+iv(a)}\]
        In particular $v(K+Ka+\ldots+Ka^n)\seq \vect{\Gamma_K,v(a)}\cup\set{\infty}$.
    \end{enumerate}
\end{corollary}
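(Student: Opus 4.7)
Both parts of the corollary are direct specializations of the preceding lemma, obtained by collapsing one of its two index ranges so that one of the two independence hypotheses becomes trivial. For (1) we collapse the $j$-index (residue-field side carries the content), while for (2) we collapse the $i$-index (value-group side carries the content).

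The plan for (1) is to invoke the lemma with $r = n+1$ and $a_i := a^{i-1}$ for $1 \leq i \leq n+1$, so that the hypothesis is exactly the required $k_K$-linear independence of $\res(a_1), \ldots, \res(a_{n+1})$. On the other side take $s = 1$ and $b_1 := 1$: the single value $w(b_1) = 0$ trivially represents one coset modulo $\Gamma_K$, so the distinct-cosets condition is vacuously fulfilled. Setting $c_{i,1} := c_{i-1}$ and assuming some $c_i$ is nonzero, the lemma yields
\[
v\Bigl(\sum_{i=0}^n c_i a^i\Bigr) \;=\; \min_i \set{v(c_{i-1,1}) + w(b_1)} \;=\; \min_i v(c_i);
\]
if all $c_i$ vanish, both sides are $\infty$. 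Since each $v(c_i) \in \Gamma_K \cup \set{\infty}$, the ``in particular'' clause follows.

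The plan for (2) is dual: take $r = 1$ with $a_1 := 1$ (a singleton family whose residue $1 \in k_K^\times$ is tautologically $k_K$-linearly independent) and $s = n+1$ with $b_j := a^{j-1}$, so that the hypothesis on $0, v(a), \ldots, v(a^n)$ lying in distinct cosets modulo $\Gamma_K$ is exactly the requirement on $w(b_1), \ldots, w(b_{n+1})$. With $c_{1,j} := c_{j-1}$ the lemma gives
\[
v\Bigl(\sum_{i=0}^n c_i a^i\Bigr) \;=\; \min_i \set{v(c_i) + i\, v(a)},
\]
with the all-zero case again trivial. Since each $v(c_i) + i\, v(a)$ lies in $\vect{\Gamma_K, v(a)} \cup \set{\infty}$, the ``in particular'' clause follows.

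I do not foresee any genuine obstacle here, since the entire work is already done in the preceding lemma; the corollary is a matter of instantiating indices. The only minor nuisance is bookkeeping the degenerate case $(c_0,\ldots,c_n) = \vec 0$, which the lemma excludes by its nonvacuity hypothesis on the $c_{i,j}$ but which is settled at once by the convention $v(0) = \infty$.
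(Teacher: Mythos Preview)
Your proposal is correct and is exactly the intended argument: the paper states the corollary immediately after the lemma without proof, and the obvious (and only reasonable) way to derive it is precisely the instantiation you describe—collapsing the $b$-index to $\{1\}$ for part (1) and the $a$-index to $\{1\}$ for part (2). Your handling of the all-zero case and the ``in particular'' clauses is also fine.
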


\begin{remark}\label{rk:productofelementsunramified}
Let $(K,v)\seq (L,v)$ such that $\Gamma_K = \Gamma_L$ then for all $a\in L$ there exist $b\in K$ and $c\in \cO_L^\times$ such that $a = bc$. Indeed, let $a\in L$, we have $v(a)\in \Gamma_K$ hence there exists $b\in K$ such that $v(a) = v(b)$. Then for $c = ab^{-1}$ we have $v(c) = 0$ so $c\in \cO_L^\times$ and $a = bc$.
\end{remark}



\begin{remark}\label{rk:productofthreeelementsramifiedcase}
    Let $(K,v)\seq (L,v)$ be such that $\Gamma_L = \vect{\Gamma_K, \alpha}$ for some element $\alpha$. Assume that $a\in L$ is such that $\alpha = v(a)$. Then every element of $L$ is a product $bca^n$ where $b\in K$, $c\in \cO_L^{\times}$ for some $n\in \Z$.
    Indeed: if $e\in L$ we have $v(e) = \gamma+n\alpha$ for some $\gamma\in \Gamma_K$ and $n\in \Z$. Then $v(ea^{-n}) = \gamma\in \Gamma_K$ hence there exists $b\in K$ such that $v(b) = \gamma$ hence for $c = eb^{-1}a^{-n}$ we have $v(c) = 0$ i.e. $c\in \cO_L^\times$ and $e = bca^n$.
\end{remark}

\subsubsection{Henselian fields} Recall that a valued field $(K,v)$ is Henselian if it satisfies the following property:

    \textbf{Simple zero lift}. For each $P\in \cO[X]$ and $\bar a\in k$ such that $\res(P)(\bar a) = 0$ and $\res(P')(\bar a) \neq  0$ there exists $b\in \cO$ such that $P(b) = 0$ and $\res(b) = \bar a$.

\begin{lemma}\label{lm:henselemazero}
    Let $(K,v)$ be Henselian of equicharacteristic $0$. If $P(X)\in \cO_K[X]$ is such that $v(P(0))>2v(P'(0))$, then there exists $a\in \cO_K$ such that 
    \[\begin{cases}
        P(a) = 0\\
       v(a) = v(P(0))-v(P'(0)).
    \end{cases}\]
\end{lemma}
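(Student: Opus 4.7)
The plan is to reduce the statement to the simple zero lift via a rescaling. Write $P(X) = \sum_{i} c_i X^i$ with $c_i \in \cO_K$, and set $\gamma = v(c_1) = v(P'(0))$ and $d = v(c_0) - v(c_1) = v(P(0)) - v(P'(0))$. The hypothesis $v(P(0)) > 2v(P'(0))$ gives $d > \gamma \geq 0$, so I can pick $\pi \in \cO_K$ with $v(\pi) = d$; in particular $\pi \in \cM$ since $d > 0$.

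The heart of the argument is the change of variable $X = \pi Y$ followed by division by $c_1 \pi$:
\[
Q(Y) := \frac{P(\pi Y)}{c_1\, \pi} = \frac{c_0}{c_1 \pi} \;+\; Y \;+\; \sum_{i \geq 2} \frac{c_i\, \pi^{i-1}}{c_1}\, Y^i.
\]
The rescaling is calibrated so that the constant coefficient $c_0/(c_1\pi)$ is a unit (its valuation is $v(c_0) - \gamma - d = 0$), the linear coefficient is exactly $1$, and every coefficient of $Y^i$ for $i \geq 2$ has valuation at least $(i-1)d - \gamma \geq d - \gamma > 0$. Hence $Q(Y) \in \cO_K[Y]$ and
\[
\res(Q)(Y) \;=\; \bar{\alpha} + Y \quad \text{where } \bar{\alpha} := \res\!\left(\tfrac{c_0}{c_1 \pi}\right) \in k^\times,
\]
which has the simple root $-\bar{\alpha}$ (the derivative $\res(Q')(Y) = 1$ is nonzero everywhere).

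Applying the simple zero lift to this root yields $b \in \cO_K$ with $Q(b) = 0$ and $\res(b) = -\bar{\alpha} \neq 0$; in particular $v(b) = 0$. Setting $a := \pi b$ gives $a \in \cO_K$ with $P(a) = (c_1 \pi)\, Q(b) = 0$ and $v(a) = v(\pi) + v(b) = d$, exactly as required.

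The main obstacle — more bookkeeping than genuine difficulty — is guessing the correct scaling: the substitution variable $\pi$ must have valuation $v(P(0)) - v(P'(0))$ and the normalizer $c_1 \pi$ is chosen precisely so that the resulting $Q$ lives in $\cO_K[Y]$ with a nondegenerate linear residue. The whole calculation hinges on the single inequality $d > \gamma$, which is exactly the content of the hypothesis $v(P(0)) > 2v(P'(0))$. The equicharacteristic $0$ assumption is not strictly needed for this particular derivation but ensures that no characteristic-$p$ pathology can spoil the reduction in related variants.
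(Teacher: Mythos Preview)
Your proof is correct and follows essentially the same strategy as the paper: a linear change of variable $X \mapsto (\text{element of valuation } d)\cdot Y$ followed by normalization so that the residual polynomial has an explicit simple root, then the simple zero lift. The only cosmetic difference is that the paper takes the scaling element to be $c=-c_0/c_1$ itself and normalizes by $c_0$ (getting $\res(Q)(Y)=1-Y$ with root $1$), whereas you choose an auxiliary $\pi$ of valuation $d$ and normalize by $c_1\pi$ (getting $\res(Q)(Y)=\bar\alpha+Y$ with root $-\bar\alpha$); since $v(c_1\pi)=v(c_0)$ these are the same move up to a unit.
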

\begin{proof}
    Write $P(X) = a_0+a_1X+\ldots+a_nX^n$ and $P'(X) = a_1+2a_2X+\ldots+na_nX^{n-1}$, so that $a_0 = P(0)$ and $a_1 = P'(0)$. Let $Q(X) = \frac{1}{a_0}P(cX)$ for $c = -\frac{a_0}{a_1}$, then we have
    \[Q(X) = 1-X +\sum_{i\geq 2} \frac{a_i}{a_0} c^i X^i\]
    Note that 
    \begin{align*}
        v(c^ia_0^{-1}) &= (i-1)v(a_0)-iv(a_1)\\
        &= (i-1)(v(a_0)-\frac{i}{i-1} v(a_1))\\
        &\geq (i-1)(v(a_0)-2v(a_1))>0
    \end{align*} 
    In particular $v(\frac{a_i}{a_0} c^i) = v(a_i)+v(c^ia_0^{-1}) >0$. It follows that $v(Q(1))>0$ hence $\res(Q)(1) = 0$ and $\res(Q')(1) = 1$. By the simple zero lift, there exists $b\in K$ such that $Q(b) = 0$ and $\res(b) = 1$. In particular $v(b) = 0$. Let $a = cb$, we have $P(a) = 0$ and $v(a) = v(c) = v(P(0))-v(P'(0))$.
\end{proof}

\begin{corollary}\label{cor:henseliannthroots}
    Let $(L,v)$ be a Henselian valued field of equicharacteristic $0$. If $(K,v)\seq (L,v)$ is such that $k_K = k_L$ and $\gamma\in \Gamma_L$ is such that $n\gamma\in \Gamma_K$, then there exists $a\in L$ such that 
        \[\begin{cases}
            a^n\in K\\
            v(a) = \gamma
        \end{cases}\]
\end{corollary}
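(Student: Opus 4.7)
My plan is to reduce the problem, via a multiplicative adjustment, to extracting an $n$-th root of an element of $\cO_L^\times$ of residue $1$, which is then handled by Hensel's lemma in $L$.

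Concretely, the first step is to pick any $b \in L$ with $v(b) = \gamma$, which exists because $\gamma \in \Gamma_L$, and any $c \in K^\times$ with $v(c) = n\gamma$, which exists because $n\gamma \in \Gamma_K$. Then $b^n/c \in \cO_L^\times$, and its residue $\bar d = \res(b^n/c)$ lies in $k_L^\times = k_K^\times$. Here I crucially use the hypothesis $k_K = k_L$: it guarantees the existence of some $e \in \cO_K^\times$ with $\res(e) = \bar d$. Setting $u = b^n/(ce)$, I then have $u \in \cO_L^\times$ with $\res(u) = 1$, so in particular $v(u - 1) > 0$.

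The next step is to find an $n$-th root of $u$ in $L$. Consider the polynomial $P(X) = X^n - u \in \cO_L[X]$. Since $\res(u) = 1$, we have $\res(P)(1) = 0$, and since $L$ has equicharacteristic $0$, $\res(P')(1) = n \neq 0$ in $k_L$. As $L$ is Henselian, the simple zero lift produces $w \in \cO_L$ with $w^n = u$ and $\res(w) = 1$; in particular $v(w) = 0$. Setting $a = b/w$ yields
\[
a^n = \frac{b^n}{w^n} = \frac{b^n}{u} = ce \in K, \qquad v(a) = v(b) - v(w) = \gamma,
\]
as desired.

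The only subtle point, and the place where the hypotheses are really used, is the adjustment by $e$: without it, the residue of $b^n/c$ might not have an $n$-th root in $k_L$, and Hensel's lemma would not apply to $X^n - b^n/c$. The hypothesis $k_K = k_L$ lets us absorb this residue into an element of $K$, reducing to the trivial case where the residue is $1$. Alternatively, one could invoke Lemma \ref{lm:henselemazero} applied to $(X+1)^n - u$ at $0$ (noting $v((0+1)^n - u) = v(1 - u) > 0 = 2v(n)$), which produces the same $w$; I prefer the direct appeal to the simple zero lift since it makes transparent where equicharacteristic $0$ is used (namely, to ensure $v(n) = 0$).
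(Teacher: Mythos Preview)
Your proof is correct and follows essentially the same route as the paper: pick $b\in L$ with $v(b)=\gamma$ and $c\in K$ with $v(c)=n\gamma$, use $k_K=k_L$ to adjust by a unit of $\cO_K$ so that $b^n/(ce)$ has residue $1$, then extract an $n$-th root via Hensel and set $a=b/w$. The only cosmetic difference is that the paper packages the root extraction as a separate claim proved through Lemma~\ref{lm:henselemazero} (applied to $(X+1)^n-(1+u)$), whereas you invoke the simple zero lift directly on $X^n-u$; you even note this alternative yourself.
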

\begin{proof}
    We first establish the following.
    \begin{claim}\label{claim_crux}
        For all $b\in \cM_L$ and for all $n\in \N$ there exists $a\in L$ such that 
        \[\begin{cases}
            a^n = 1+b\\
            v(a-1) = v(b)
        \end{cases}\]
    \end{claim}
    \begin{proof}[Proof of the claim]
    Let $P(X) = (X+1)^n - (1+b)$. We have $v(P(0)) = v(b)>0 = 2v(1) = 2P'(0)$. By Lemma \ref{lm:henselemazero}, there exists $c\in L$ such that $P(c) = 0$ and $v(c) = v(P(0))-v(P'(0)) = v(b)$. Then $a = c+1$ is suitable for the claim.
    \end{proof}
    Let $b\in L$ and $c\in K$ such that $v(b) = \gamma$ and $v(c) = n\gamma$. We have $v(b^nc^{-1}) = 0$ so we may apply $\res$ and as $k_K = k_L$, there exists $d\in \cO_K^\times$ such that $\res(d) = \res(b^nc^{-1})$ ($d$ is of valuation $0$ since otherwise $\res(d) = 0$). We set $c' = cd$. Then we have $\res(b^nc'^{-1}) = 1$ so $b^n c'^{-1} = 1+u$ for $u\in \cM_L$ and $b^n = c'(1+u)$. By the claim, $1+u$ has an $n$-th root $e$ with $v(e-1) = v(u)>0$, i.e. $\res(e-1) = 0$ so $\res(e) = \res(1)$ so $v(e) = 0$. It follows that $v(be^{-1}) = v(b) = \gamma$ and $(be^{-1})^n = c'$. This finishes the proof with $a = be^{-1}$.
\end{proof}

We finish with two important and classical theorems on Henselian fields.
\begin{theorem}\label{thm:henselianalgebraicextension}
    $(K,v)$ is Henselian if and only if for all algebraic field extension $L$ of $K$, there exists a unique valuation $w$ on $L$ which extends $v$.
\end{theorem}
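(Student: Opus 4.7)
The plan is to prove both directions of the equivalence separately. Existence of an extension of $v$ to any algebraic $L/K$ is a standard Zorn-type result (e.g.~via Chevalley's extension theorem applied to local subrings of $L$ dominating $\cO$) and uses nothing about $K$ being Henselian, so the real content of the theorem lies in (a) the $(\Rightarrow)$ direction, that Henselian forces uniqueness, and (b) the $(\Leftarrow)$ direction, that failure of the simple zero lift yields two distinct extensions.

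For the $(\Leftarrow)$ direction, I would argue by contrapositive. Suppose some $P \in \cO[X]$ and $\bar a \in k$ witness the failure of the simple zero lift. Using Gauss's lemma over the valuation ring $\cO$, I factor $P$ into monic irreducibles in $\cO[X]$ and replace $P$ by the unique factor whose residue has $\bar a$ as a (simple) root; this factor must have degree $\geq 2$, since a linear factor $X - b$ would exhibit the missing lift $b$. Fix any extension $\bar v$ of $v$ to an algebraic closure $\bar K$. Since $P$ is monic in $\cO[X]$, all its roots $\alpha_1, \dots, \alpha_n \in \bar K$ lie in $\cO_{\bar v}$, and because $\bar a$ is a simple root of $\res(P)$, exactly one of them, say $\alpha_1$, satisfies $\res_{\bar v}(\alpha_1) = \bar a$. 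Picking another root $\alpha_2$ and the $K$-isomorphism $\sigma \colon K(\alpha_1) \to K(\alpha_2)$ sending $\alpha_1 \mapsto \alpha_2$, I set $w_1 := \bar v \upharpoonright K(\alpha_1)$ and $w_2 := \bar v \circ \sigma$; both extend $v$. But for any lift $a \in \cO$ of $\bar a$, $w_1(\alpha_1 - a) > 0$ while $w_2(\alpha_1 - a) = \bar v(\alpha_2 - a) = 0$ (because $\res_{\bar v}(\alpha_2) \neq \bar a = \res(a)$), so $w_1 \neq w_2$, contradicting uniqueness.

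For the $(\Rightarrow)$ direction, assuming $K$ Henselian, I would first reduce uniqueness to the case of a finite normal extension (pass to the normal closure of any $L/K$ and observe that uniqueness descends to subextensions and to directed unions). For finite normal $L/K$, $\Aut(L/K)$ acts transitively on extensions of $v$, so any two are conjugate: $w_2 = w_1 \circ \sigma$ for some $\sigma \in \Aut(L/K)$, and it suffices to show every such $\sigma$ preserves $\cO_{w_1}$. The plan is to go through the integral closure $\cO_L$ of $\cO$ in $L$ and show it is a local ring; any extension $w$ of $v$ then satisfies $\cO_w \supseteq \cO_L$ (since $\cO_w$ is integrally closed and contains $\cO$) and must be the localization of $\cO_L$ at its unique maximal ideal, forcing uniqueness. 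Locality of $\cO_L$ will be extracted from the factorization form of Hensel's lemma, a consequence of the simple zero lift obtained by induction on the degree of a coprime factor: any coprime factorization $\res(f) = g \cdot h$ in $k[X]$ of a monic $f \in \cO[X]$ lifts to $f = G \cdot H$ in $\cO[X]$. Applied to the minimal polynomial $f$ over $K$ of a primitive element $\alpha \in L$, this forces $\res(f) = p(X)^m$ in $k[X]$ for some irreducible $p$, for otherwise a coprime factorization would lift to a proper factorization of $f$ and contradict irreducibility. Then $\cO[\alpha]/\cM\cO[\alpha] \cong k[X]/(p^m)$ is local, and I would transfer this to the possibly larger $\cO_L$ by a lying-over / going-up argument along $\cO[\alpha] \subseteq \cO_L$. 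I expect this last step to be the main technical obstacle, since $\cO_L$ can be strictly larger than $\cO[\alpha]$ and the bridge from Hensel factorization (a statement about polynomials) to locality of the full integral closure requires careful commutative algebra; a cleaner alternative is to argue directly that all Galois conjugates of $\alpha$ reduce to roots of $\res(f) = p^m$ and therefore lie in the same maximal ideal of $\cO_L$, so that $\Aut(L/K)$ preserves $\cO_{w_1}$ element-by-element.
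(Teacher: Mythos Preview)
The paper does not prove this theorem: immediately after stating Theorems~\ref{thm:henselianalgebraicextension} and~\ref{thm:ostrowski} it says ``The proofs of Theorem~\ref{thm:henselianalgebraicextension} and~\ref{thm:ostrowski} are beyond the scope of this course.'' So there is no proof in the paper to compare against, and I can only assess your proposal on its own merits.

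Your $(\Leftarrow)$ direction is essentially correct and is one of the standard arguments. One small point to tighten: the paper's definition of the simple zero lift is for arbitrary $P\in\cO[X]$, not just monic ones, so ``factor $P$ into monic irreducibles in $\cO[X]$ via Gauss'' needs a word. The clean fix is to first observe that $\res(P)\neq 0$ forces $P$ to be primitive, then use that $\cO$ is integrally closed to pull any monic factor of $P$ in $K[X]$ back into $\cO[X]$; alternatively, one checks separately that failure of the lift for some $P$ forces failure for a monic $P$. Either way this is routine.

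Your $(\Rightarrow)$ direction has the right overall architecture, but there is a genuine gap at the step ``simple zero lift $\Rightarrow$ factorization lift, by induction on the degree of a coprime factor.'' That induction does not go through: the base case $\deg g_0=1$ is exactly the simple zero lift, but for $\deg g_0>1$ you need to peel off a smaller factor of $g_0$, and $g_0$ may well be irreducible over $k$ with no root to peel. (Even in residue characteristic $0$ this happens whenever $k$ is not algebraically closed.) The usual proofs of this implication do \emph{not} proceed by degree induction; they go instead through a multivariable Newton argument (the Jacobian of the system ``coefficients of $gh=$ coefficients of $f$'' is the resultant $\mathrm{Res}(g_0,h_0)$, a unit by coprimality), or through completion, or they bypass factorization entirely and prove uniqueness directly via Newton polygons. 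Your ``cleaner alternative'' at the end still presupposes $\res(f)=p^m$, which is exactly the consequence of factorization lifting you have not yet established, so it does not close the gap. The difficulty you flag with $\cO[\alpha]\subseteq\cO_L$ is real but secondary; once factorization lifting is in hand, the argument ``for every $\beta\in\cO_L$ the minimal polynomial reduces to a power of an irreducible, hence $w(\beta)>0$ is detected by whether that irreducible is $X$'' shows all maximal ideals of $\cO_L$ coincide, without needing a primitive element.
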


\begin{theorem}\label{thm:ostrowski}(Ostrowski)
    Let $(K,v)$ be a Henselian valued field and $L$ a finite field extension of $K$. Let $w$ be the unique extension of $v$ to $L$. Then
    \[[L:K] = [k_L:k_K][\Gamma_L:\Gamma_K]\chi^d\]
    for some $d\in \N$ and 
    \[
    \chi = \begin{cases}
        \mathrm{char}(k_K) &\text{ if $\mathrm{char}(k_K)>0$}\\
        1 &\text{ if $\mathrm{char}(k_K) = 0$.}
        \end{cases}
    \]
\end{theorem}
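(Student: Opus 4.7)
The plan is to build a tower $K \subseteq K_1 \subseteq K_2 \subseteq L$ separating the three factors on the right-hand side. Set $f := [k_L:k_K]$ and $e := [\Gamma_L:\Gamma_K]$; both are finite by the fundamental inequality recorded after the linear independence lemma. Decompose $e = e_0 \cdot e_\chi$ with $e_0$ prime to $\chi$ and $e_\chi$ a power of $\chi$ (so $e_\chi = 1$ when $\chi = 1$). The field $K_1$ will carry the residual extension (degree $f$), $K_2$ will add the tame ramification (degree $e_0$), and the remaining extension $L/K_2$ will contribute a factor $e_\chi \cdot \chi^{d'}$.

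\emph{Unramified step.} Pick $\bar\alpha \in k_L$ with $k_L = k_K(\bar\alpha)$ via the primitive element theorem (valid in equicharacteristic $0$ since $k_L/k_K$ is then separable) and lift its minimal polynomial $\bar P$ to $P \in \cO_K[X]$. As $\res(P')(\bar\alpha) \neq 0$, the simple zero lift produces $\alpha \in \cO_L$ with $P(\alpha) = 0$ and $\res(\alpha) = \bar\alpha$; set $K_1 := K(\alpha)$. Corollary~\ref{cor:3.7bestextension}(1), applied to the $k_K$-independent family $1, \res(\alpha), \ldots, \res(\alpha)^{f-1}$, forces $\Gamma_{K_1} = \Gamma_K$ and hence $[K_1:K] = f$, $k_{K_1} = k_L$. \emph{Tame step.} Decompose the finite torsion group $\Gamma_L/\Gamma_{K_1}$ into its $\chi'$-part (order $e_0$) and $\chi$-part (order $e_\chi$). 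For each cyclic factor of the $\chi'$-part, pick a generator $\gamma$ of order $n$ coprime to $\chi$; since $n\gamma \in \Gamma_{K_1}$, Corollary~\ref{cor:henseliannthroots} yields $a \in L$ with $a^n \in K_1$ and $v(a) = \gamma$. Successively adjoining these elements (invoking Corollary~\ref{cor:3.7bestextension}(2) together with the fundamental inequality at each stage to conclude the residue field stays fixed and the value group grows as prescribed) produces $K_2$ with $[K_2:K_1] = e_0$, $k_{K_2} = k_L$, and $[\Gamma_L:\Gamma_{K_2}] = e_\chi$.

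\emph{Wild step} — the main obstacle. We must show $[L:K_2] = e_\chi \cdot \chi^{d'}$ for some $d' \geq 0$. This is the classical Ostrowski defect theorem: passing to the Galois closure $M$ of $L/K_2$ (each $K_2$-conjugate of $L$ shares the same residue field $k_L$ and value-group index $e_\chi$ over $\Gamma_{K_2}$ by the uniqueness of extensions in Theorem~\ref{thm:henselianalgebraicextension}, so $M$ has the same invariants as $L$), one analyzes the inertia filtration of $\Gal(M/K_2)$: by construction of $K_2$ the tame inertia quotient is cyclic of order dividing $e_\chi$, while the wild inertia is a pro-$\chi$-group; hence $|\Gal(M/K_2)|$, and thus $[L:K_2]$, has the required shape $e_\chi \cdot \chi^{d'}$. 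In equicharacteristic $0$ (the only case used for AKE), $\chi = 1 = e_\chi$, and the statement collapses to $L = K_2$, the pure-defect-trivial case in characteristic zero. Multiplying the three degrees yields $[L:K] = f \cdot e_0 \cdot e_\chi \cdot \chi^{d'} = [k_L:k_K]\,[\Gamma_L:\Gamma_K]\,\chi^d$ with $d := d'$.
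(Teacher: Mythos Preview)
The paper does not prove this theorem; it states explicitly that the proofs of Theorems~\ref{thm:henselianalgebraicextension} and~\ref{thm:ostrowski} are ``beyond the scope of this course.'' So there is no reference proof to compare against, only the question of whether your argument stands on its own.

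Your tower decomposition is the standard strategy, and the first two steps are correct in spirit. Note, however, that the tools you invoke (the primitive element theorem for $k_L/k_K$, the simple zero lift, and Corollary~\ref{cor:henseliannthroots}) are stated in the paper only under a residue-characteristic-$0$ hypothesis; in positive residue characteristic you would need to split off the purely inseparable part of $k_L/k_K$ and rework the $n$-th-root argument, neither of which you do.

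The genuine gap is the wild step. Your sketch appeals to the inertia filtration and asserts that the wild inertia subgroup is a pro-$\chi$-group, but this is itself a substantial theorem of higher ramification theory that you do not prove; it is in fact where essentially all the content of Ostrowski lies. Worse, in the equicharacteristic-$0$ case you simply declare $L = K_2$ as ``the pure-defect-trivial case in characteristic zero'' --- but that is exactly the statement to be proved, and in this paper it appears as Corollary~\ref{cor:henselianchar0algmax}, which is \emph{deduced from} Theorem~\ref{thm:ostrowski}. So the argument is circular at the decisive point. A self-contained proof must actually establish that the higher ramification groups are $p$-groups (for instance via their action on the successive quotients $U^{(i)}/U^{(i+1)}$ of the unit filtration), or else give an independent argument that Henselian fields of residue characteristic~$0$ are algebraically maximal.
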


\begin{remark}
    The number $d$ in Theorem \ref{thm:ostrowski} is usually called the \textit{defect} of the extension $L/K$.
\end{remark}

The proofs of Theorem \ref{thm:henselianalgebraicextension} and \ref{thm:ostrowski} are beyond the scope of this course.

\begin{corollary}\label{cor:henselianchar0algmax}
    Let $(K,v)$ be an Henselian valued field with $\mathrm{char}(k) = 0$, then $K$ has no proper immediate algebraic extensions (i.e. an extension $(L,w)$ of $(K,v)$ such that $k_K = k_L $ and $\Gamma_K = \Gamma_L$).
\end{corollary}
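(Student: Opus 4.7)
The plan is to reduce to the finite case and apply Ostrowski's theorem (Theorem \ref{thm:ostrowski}). Assume for contradiction that $(L,w)$ is a proper immediate algebraic extension of $(K,v)$, so that $k_L = k_K$, $\Gamma_L = \Gamma_K$, and there exists $a \in L \setminus K$. Note that by Theorem \ref{thm:henselianalgebraicextension}, the valuation $w$ is the unique extension of $v$ to any algebraic extension of $K$, so we do not need to worry about choices.

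Consider the intermediate valued field $(K(a),w)$, which is a finite extension of $(K,v)$ of degree $>1$. I claim that $K(a)/K$ is still immediate. Indeed, one has the inclusions $k_K \subseteq k_{K(a)} \subseteq k_L$ and $\Gamma_K \subseteq \Gamma_{K(a)} \subseteq \Gamma_L$; since the outer fields and groups coincide by assumption, so do the inner ones. Thus $[k_{K(a)}:k_K] = 1$ and $[\Gamma_{K(a)}:\Gamma_K] = 1$.

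Now apply Theorem \ref{thm:ostrowski} to the finite extension $K(a)/K$. Since $\mathrm{char}(k_K) = 0$, the constant $\chi$ equals $1$, so the Ostrowski formula yields
\[
[K(a):K] = [k_{K(a)}:k_K]\cdot[\Gamma_{K(a)}:\Gamma_K]\cdot \chi^d = 1\cdot 1\cdot 1 = 1,
\]
which contradicts $a \notin K$. Hence no proper immediate algebraic extension of $(K,v)$ exists.

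The only substantive input is Ostrowski's theorem, which is quoted without proof; the rest is a routine sandwich argument combined with the fact that $\chi = 1$ when the residue characteristic is zero. There is no real obstacle, since the uniqueness of the valuation extension (Theorem \ref{thm:henselianalgebraicextension}) eliminates any ambiguity in speaking of $k_{K(a)}$ and $\Gamma_{K(a)}$.
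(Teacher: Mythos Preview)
Your proof is correct and follows essentially the same approach as the paper: reduce to a finite immediate extension and apply Ostrowski's theorem with $\chi = 1$. The paper is slightly terser, simply asserting that a proper immediate algebraic extension yields a proper immediate finite extension, whereas you spell out the sandwich argument for $k_{K(a)}$ and $\Gamma_{K(a)}$ explicitly.
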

\begin{proof}
    If $(K,v)$ has an immediate algebraic extension, it has an immediate finite extension $(L,w)$. As $\Gamma_K = \Gamma_L$ and $k_K = k_L$ and $\chi = 1$ (as $\mathrm{char}(k_K) = 0$) it remains $[L:K] = 1$ by Ostrowski's theorem.
\end{proof}

\begin{exercise}\label{exo:henselslemma}
    Prove that the simple zero lift property is equivalent to
    \begin{center}
    $(\ast)$ given $a\in \cO$ and $P\in \cO[X]$ with $v(P(a))>2v(P'(a))$, there exists $b\in \cO$ with $P(b) = 0$ and $v(b-a)>v(P'(a)))$
    \end{center}
    First, assume that $v(P(a))>2v(P'(a))$ for some $a\in \cO$ and $P\in \cO[X]\setminus\set{0}$.
    \begin{enumerate}
        \item Prove that $P'(a)\neq 0
        $. 
        \item Prove that there exists $Q(Y,X)\in K[X,Y]$ such that $P(a-X) = P(a)-P'(a)X+X^2Q(a,X)$ (\textit{Hint:} Check out Lemma \ref{lm:taylor} below).
        \item Prove that for $Y = X/P'(a)$ the polynomial
        \[R(Y) := \frac{P(a-P'(a)Y}{P'(a)^2}\]
        satisfies:
        \begin{enumerate}
            \item $R(Y)\in \cO[Y]$.
            \item $\res(R)(0) = 0$, $\res(R')(0) = -1$.
        \end{enumerate}
        \item Use the simple zero lift property prove that there exists $c\in \cO$ such that $R(c) = 0$.
        \item Conclude $(\ast)$ by taking $b = a-f'(a)c\in \cO$.
    \end{enumerate}
    Conversely, if $P\in \cO[X]$, $\res(P)(\bar a) = 0\neq \res(P')(\bar a)$ then for any lift $a\in \cO$ of $\bar a$ we have $v(P(a)) >0 = v(P'(a))$. Conclude using $(\ast)$.
\end{exercise}

\begin{exercise}\label{exo:definitionZpdansQP}
    We prove that $\Z_p$ is definable in $\Q_p$ in the language of rings:
    \[\Z_p = \set{a\in \Q_p\mid  \exists y(  1+pa^2 = y^2)} \text{ if $p\neq 2$}\]
    and 
    \[\Z_2 = \set{x\in\Q_p\mid \exists y(  1+2x^3 = y^3)}.\]
    We detail the steps for $p\neq 2$, the case $p=2$ is similar. 
    \begin{enumerate}
        \item If $a\in \Q_p\setminus \Z_p$.
        \begin{enumerate}
            \item Check that $v(a)$ is even if $a$ is a square (this does not use $a\notin \Z_p$).
            \item Prove that $v(pa^2)\leq -1$.
            \item Deduce that $v(1+pa^2)\in \Z$ is odd.
            \item Conclude.
        \end{enumerate} 
        \item If $a\in \Z_p$, consider $P(Y) = Y^2-(1+pa^2)$.
        \begin{enumerate}
            \item Prove that $v(P(1))>2v(P'(1))$.
            \item Conclude using Exercise \ref{exo:henselslemma}.
        \end{enumerate}
    \end{enumerate}
\end{exercise}

\begin{exercise}
    Let $p>2$. Prove that in $\F_p((t))$ the ring $\F_p[[t]]$ is definable with the parameter $t$ by the formula \[\exists y\ 1+tx^2 = y^2.\]
    (\textit{Hint:} Proceed as in Exercise \ref{exo:definitionZpdansQP}.)
\end{exercise}

\subsubsection{Henselization}

\begin{fact}\label{fact:henselization}
    Let $(K,v)$ be any valued field. There exists a valued field extension $(K^h,v^h)$ of $(K,v)$ such that:
    \begin{enumerate}
        \item $K^h$ is an algebraic extension of $K$ (as fields), i.e. $K\seq K^h\seq K^\alg$;
        \item $(K^h,v^h)$ is Henselien;
        \item If $(L,w)$ is a Henselian valued field extending $(K,v)$, then there exists an embedding of valued fields $i:(K^h,v^h)\to (L,w)$ over $K$ (i.e. a field embedding $i:K^h\to L$ such that $i\upharpoonright K = \Id_K$ and $i(\cO_{K^h}) = \cO_L\cap i(K^h)$).
        \item $(K^h,v^h)$ is an \emph{immediate} extension of $(K,v)$, i.e. $k_K = k_{K^h}$ and $\Gamma_K = \Gamma_{K^h}$.
    \end{enumerate}
    $(K^h,v^h)$ is called the \textit{Henselization of $(K,v)$}.
\end{fact}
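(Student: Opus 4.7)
The plan is to realize $(K^h,v^h)$ as a decomposition-fixed subfield of the separable closure. First, by Chevalley's extension theorem (a Zorn's lemma argument on dominating valuation rings), fix an extension $\tilde v$ of $v$ to $K^\alg$ and set $\tilde v^s := \tilde v|_{K^{\mathrm{sep}}}$. The absolute Galois group $G := \Gal(K^{\mathrm{sep}}/K)$ acts on the set of extensions of $v$ to $K^{\mathrm{sep}}$ by precomposition, and we define
\[ D := \{\sigma \in G : \tilde v^s \circ \sigma = \tilde v^s\}, \quad K^h := (K^{\mathrm{sep}})^D, \quad v^h := \tilde v|_{K^h}. \]
Point (1) is then immediate from $K^h \subseteq K^{\mathrm{sep}} \subseteq K^\alg$.

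For (2) I would invoke Theorem \ref{thm:henselianalgebraicextension}: it suffices to show that $v^h$ extends uniquely to every algebraic extension of $K^h$. Any such extension embeds into $K^\alg$, and any two extensions of $v^h$ to $K^{\mathrm{sep}}$ differ by an element of $\Gal(K^{\mathrm{sep}}/K^h) = D$, which fixes $\tilde v^s$ by construction; uniqueness on $K^{\mathrm{sep}}$ then propagates to all of $K^\alg$ since purely inseparable extensions always inherit a unique valuation from below.

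For (3), let $(L,w)$ be Henselian extending $(K,v)$. By Theorem \ref{thm:henselianalgebraicextension}, $w$ admits a unique extension $\bar w$ to $L^\alg$. Pick any $K$-embedding $i : K^{\mathrm{sep}} \hookrightarrow L^{\mathrm{sep}}$; then $\bar w \circ i$ is an extension of $v$ to $K^{\mathrm{sep}}$, so $\bar w \circ i = \tilde v^s \circ \sigma_0$ for some $\sigma_0 \in G$, and replacing $i$ by $i \circ \sigma_0^{-1}$ we may assume $\bar w \circ i = \tilde v^s$. Now for any $\tau \in \Gal(L^{\mathrm{sep}}/L)$, Henselianity of $(L,w)$ forces $\bar w \circ \tau = \bar w$; restricting to $i(K^{\mathrm{sep}})$, which is stable under $\tau$ as the set of elements of $L^{\mathrm{sep}}$ separable-algebraic over $K \subseteq L$, the element $\sigma := i^{-1} \tau i \in G$ satisfies $\tilde v^s \circ \sigma = \tilde v^s$. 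Hence $\sigma \in D$, hence $\sigma$ fixes $K^h$ pointwise. Therefore every $\tau \in \Gal(L^{\mathrm{sep}}/L)$ fixes $i(K^h)$, giving $i(K^h) \subseteq L$.

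The last point (4), that $K^h/K$ is immediate, is the main obstacle. Here I would invoke the classical structure of the decomposition group via the short exact sequence
\[ 1 \to I \to D \to \Gal(k^{\mathrm{sep}}/k) \to 1 \]
(with $I$ the inertia subgroup), together with the analogous description of the ramification filtration governing the value-group extension. Passing to the fixed field of all of $D$ quotients out precisely the residue-field extension and the ramified part of the value group, leaving $k_{K^h} = k$ and $\Gamma_{K^h} = \Gamma$. This is genuinely the heart of classical ramification theory and is not a one-line consequence of any result in this excerpt; a complete account can be found in Endler or Ribenboim, which is presumably why Fact \ref{fact:henselization} is stated without proof here.
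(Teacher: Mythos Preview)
Your construction via the decomposition group is exactly the approach sketched in the paper: extend $v$ to the (separable) algebraic closure, take $D$ to be the stabilizer of that extension in the absolute Galois group, and set $K^h$ to be the fixed field of $D$. The paper works over $K^\alg$ under a standing characteristic-$0$ assumption while you work over $K^{\mathrm{sep}}$ and handle the purely inseparable part separately in (2); these are equivalent bookkeeping choices.

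For (1) and (2) your argument and the paper's coincide. For (3) you actually go further than the paper, which simply says ``(3) and (4) need more work'' and cites a reference: your Galois-theoretic argument (adjust $i$ by a conjugating $\sigma_0$ via the Conjugation Theorem, then show every $\tau\in\Gal(L^{\mathrm{sep}}/L)$ restricts to an element of $D$) is correct and is the standard one. One small point you use without stating it is that $D$ is closed in $G$, so that $\Gal(K^{\mathrm{sep}}/K^h)=D$; the paper flags this explicitly. For (4) both you and the paper defer to the literature, and your indication via the inertia exact sequence is an accurate pointer to where the content lies.
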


$(3)$ will be called the \textit{universal property of the Henselization} and as often with this sort of property, it implies that $(K^h,v^h)$ is unique up to $K$-isomorphism of valued field. The proof of Fact \ref{fact:henselization} is beyond the scope of this course, however we will explain how $(K^h,v^h)$ is constructed using infinite Galois theory.

For convenience we assume that $K$ is of characteristic $0$ but what we will describe now has an equivalent in positive characteristic. The \textit{absolute Galois group} of the field $K$ is by definition:
\[G_K := \Aut(K^\alg/K)\]
In infinite Galois theory, $G_K$ is identified with the inverse limit of the inverse system of finite groups 
\[\set{\mathrm{Gal}(L/K)\mid L \text{ finite Galois extension of $K$}}\]
with the restriction maps $\Gal(M/K)\to \Gal(L/K)$ as connecting homomorphisms, for $K\seq L\seq M$. Essentially, an element $\sigma\in G_K$ is though of as the family
\[(\sigma_L\mid \text{$L$ finite Galois extension of $K$, $\sigma_L\in \Gal(L/K)$ and if $M\supseteq L\supseteq K$ $\sigma_M\upharpoonright L = \sigma_L$}).\]
$G_K$ is thus a \textit{profinite group} (=inverse limit of finite groups) and as such is endowed with a topology, which admits cosets of normal subgroups of finite index as a basis of open sets. The Galois correspondence gives that there is a one-to-one correspondence
\[\set{\text{closed subgroups of $G_K$}}\leftrightarrow \set{\text{intermediate fields $K\seq L\seq K^\alg$}}\]
given by 
\begin{align*}
    H&\mapsto \fix(H) = \set{a\in K^\alg\mid \sigma(a) = a \text{ for all $\sigma\in H$}}\\
    \Gal(K^\alg/L)&\text{ \reflectbox{$\mapsto$} } L.
\end{align*}

This correspondence is of course more precise (e.g. $L/K$ is Galois iff $\Gal(L/K)$ is normal in $G_K$, etc). We now consider the valued field $(K,v)$. We will use two standard facts from classical valuation theory:
\begin{enumerate}[label=\alph*)]
    \item \textit{(Extension Theorem)} For any field extension $L$ of $K$ there exists a valuation $w$ on $L$ extending $v$.
    \item \textit{(Conjugation Theorem)} If $L$ is a normal field extension of $K$ and $w_1,w_2$ two valuations on $L$ extending $v$, then there exists a field automorphism $\sigma$ of $L$ over $K$ such that $\sigma(\cO_{w_1}) = \cO_{w_2}$.
\end{enumerate}
By the extension theorem, there exists a valuation $w$ on $K^\alg$ extending the valuation $v$ on $K$. We define
\[D_w := \set{\sigma\in G_K\mid \sigma(\cO_w) = \cO_w}\seq G_K\]
Note that $D_w$ is the automorphism group of the valued field $(K^\alg,w)$ over $(K,v)$. One proves that $D_w$ is a closed subgroup of $G_K$ and that for any other extension $w'$ of $v$ to $K^\alg$ the groups $D_w$ and $D_{w'}$ are conjugate as subgroups of $G_K$ (in particular $D_w$ may not be a normal subgroup). We can now define $(K^h,v^h)$:
\[K^h := \fix(D_w)\quad; \quad v^h := w\upharpoonright K^h.\]
This already gives $(1)$ of Fact \ref{fact:henselization}. Using the Galois correspondence, we have 
\[\Aut(K^\alg/K^h) = D_w = \set{\sigma\in G_K\mid \sigma(\cO_w) = \cO_w}.\]
By the Conjugation Theorem, any extension of $v^h$ to $K^\alg$ have to be conjugated by an element of $\Aut(K^\alg/K^h)$ hence $v^h$ has a unique extension to $K^\alg = (K^h)^\alg$. By Theorem \ref{thm:henselianalgebraicextension}, this gives that $(K^h,v^h)$ is Henselian (2). (3) and (4) need more work, see e.g. \cite{jahnkecourse}.

\subsubsection{Kaplanski theory of pseudo-convergence}\label{subsub:kaplanski} In this section we consider sequences of elements in a valued field $(K,v)$ with value group $\Gamma$. Most sequences will be indexed by ordinals. Those results are due to Kaplanski \cite{kaplanskimaximalvalued} and are classical.

\begin{definition}
    Let $(a_i) = (a_i)_{i<\lambda}$ be a sequence in $(K,v)$ for some limit ordinal $\lambda$. \begin{enumerate}
        \item We say that $(a_i)$ \textit{pseudoconverges} to $a\in K$, denoted $(a_i)\rightsquigarrow a$ if $(v(a_i-a))_{i<\lambda}$ is eventually strictly increasing, i.e. there exists $i_0<\lambda$ such that for all $i_0<i<j<\lambda$ we have \[v(a_j-a)> v(a_i-a)\]
        We say that $a$ is a \textit{pseudolimit} of $(a_i)$.
        \item $(a_i)$ is a \textit{pseudo-Cauchy sequence} if there exists $i_0<\lambda$ such that for all $i_0<j_1<j_2<j_3<\lambda$ we have 
        \[v(a_{j_3}-a_{j_2})>v(a_{j_2} - a_{j_1})\]
    \end{enumerate}
\end{definition}

\begin{remark}\label{rk:pc-sequences}
    Some easy facts.
    \begin{enumerate}
        \item (A pseudolimit is rarely unique). In fact, if $(a_i)\rsa a$ then for all $b$ we have $(a_i)\rsa b$ if and only if $v(a-b)> v(a-a_i)$ eventually (i.e. there exists $i_0<\lambda$ such that $v(a-b)>v(a-a_i)$). See Exercise \ref{exo:differentlimits}.
        \item (Every pseudoconvergent sequence is a pseudo-Cauchy sequence). If $(a_i)\rsa a$ then $(a_i)$ is a pc-sequence: let $i_0<\lambda$ be such that $v(a-a_j)>v(a-a_i)$ for all $i_0<i<j$, then if $i_0<j_1<j_2<j_3$ we have $v(a_{j_3} - a_{j_2})= v(a_{j_3} - a + a-a_{j_2}) = v(a-a_{j_2})$ because $v(a-a_{j_3})>v(a-a_{j_2})$. Similarly $v(a_{j_2}-a_{j_1}) = v(a-a_{j_1})$, hence as $v(a-a_{j_2})> v(a-a_{j_1})$ we conclude:
        \[v(a_{j_3}-a_{j_2}) = v(a-a_{j_2})> v(a-a_{j_1}) = v(a_{j_2} - a_{j_1})\]
        \item (Valuation of a pc-sequence, I) If $(a_i)$ is a pc-sequence then we will consider the sequence $(\alpha_i)\seq \Gamma$ such that $v(a_{i+1}-a_i) = \alpha_i$. The sequence $(\alpha_i)$ is eventually strictly increasing. Indeed, for all $j>i>i_0$ we have 
        $v(a_{i+1}-a_i) = v(a_{i+1} - a_j +a_j-a_i) = v(a_j-a_i)$ since $v(a_{i+1}-a_j)>v(a_j-a_i)$. Also $\alpha_{i+1} = v(a_{i+2}-a_{i+1})>v(a_{i+1}-a_i) = \alpha_i$ for $i>i_0$. If $(a_i)\rsa a$ we also have $\alpha_i = v(a - a_i)$ eventually.
        \item (Valuation of a convergent sequence) If $(a_i)\rsa a$ then the sequence $(\beta_i) = (v(a_i))$ is eventually strictly increasing or eventually constant. Indeed, suppose first that $v(a_i)\geq v(a)$ for some $i>i_0$, then for all $j>i$ we have $v(a-a_j)>v(a-a_i)\geq\min\set{v(a_i),v(a)} = v(a)$ hence $v(a) = v(a_j)$ so $(\beta_i)$ is eventually constant. Otherwise, $v(a_i)<v(a)$ for all $i> i_0$ and for $i_0<i<j$ we have $v(a_i) = v(a-a_i)<v(a-a_j) = v(a_j)$.
    \end{enumerate}
\end{remark}

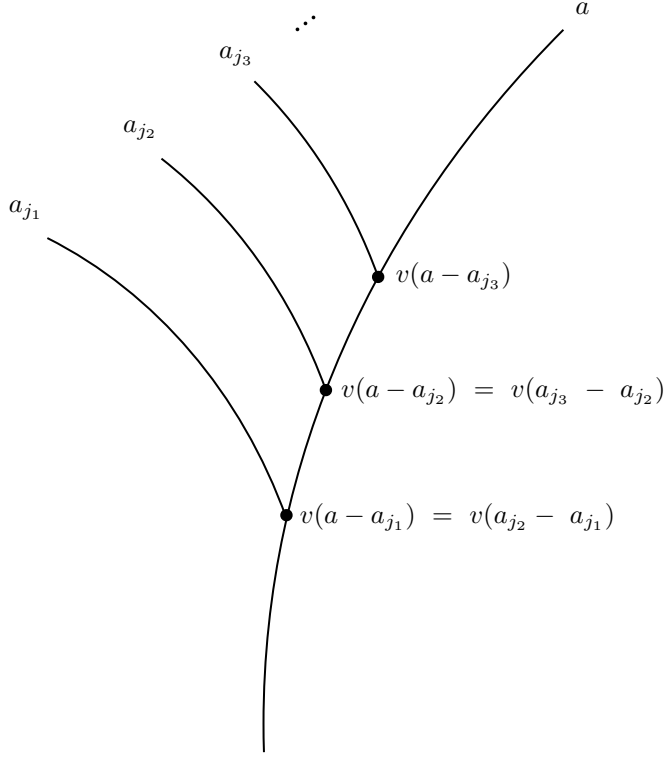
\begin{figure}
    \centering
\tikzset{every picture/.style={line width=0.75pt}} 

\begin{tikzpicture}[x=0.75pt,y=0.75pt,yscale=-1,xscale=1]

\draw  [draw opacity=0] (303.72,406.96) .. controls (303.57,402.15) and (303.5,397.33) .. (303.5,392.5) .. controls (303.5,259.8) and (359.72,138.16) .. (453.19,43.59) -- (950.75,392.5) -- cycle ; \draw   (303.72,406.96) .. controls (303.57,402.15) and (303.5,397.33) .. (303.5,392.5) .. controls (303.5,259.8) and (359.72,138.16) .. (453.19,43.59) ;  
\draw  [draw opacity=0] (252.56,108.4) .. controls (288.88,137.07) and (317.63,177.47) .. (334.65,224.83) -- (123.08,324.83) -- cycle ; \draw   (252.56,108.4) .. controls (288.88,137.07) and (317.63,177.47) .. (334.65,224.83) ;  
\draw  [line width=2.25]  (359.14,167.91) .. controls (359.14,167.03) and (359.85,166.32) .. (360.73,166.32) .. controls (361.6,166.32) and (362.31,167.03) .. (362.31,167.91) .. controls (362.31,168.78) and (361.6,169.49) .. (360.73,169.49) .. controls (359.85,169.49) and (359.14,168.78) .. (359.14,167.91) -- cycle ;
\draw  [draw opacity=0] (195.6,148.36) .. controls (249.58,175.78) and (292.58,226.47) .. (314.95,289.34) -- (103.08,388.5) -- cycle ; \draw   (195.6,148.36) .. controls (249.58,175.78) and (292.58,226.47) .. (314.95,289.34) ;  
\draw  [draw opacity=0] (298.94,69.5) .. controls (325.7,96.13) and (347.02,129.76) .. (360.73,167.91) -- (149.16,267.91) -- cycle ; \draw   (298.94,69.5) .. controls (325.7,96.13) and (347.02,129.76) .. (360.73,167.91) ;  
\draw  [line width=2.25]  (333.06,224.83) .. controls (333.06,223.95) and (333.77,223.24) .. (334.65,223.24) .. controls (335.52,223.24) and (336.24,223.95) .. (336.24,224.83) .. controls (336.24,225.7) and (335.52,226.42) .. (334.65,226.42) .. controls (333.77,226.42) and (333.06,225.7) .. (333.06,224.83) -- cycle ;
\draw  [line width=2.25]  (313.36,287.76) .. controls (313.36,286.88) and (314.07,286.17) .. (314.95,286.17) .. controls (315.83,286.17) and (316.54,286.88) .. (316.54,287.76) .. controls (316.54,288.63) and (315.83,289.34) .. (314.95,289.34) .. controls (314.07,289.34) and (313.36,288.63) .. (313.36,287.76) -- cycle ;
\draw   (320.03,43.4) .. controls (320.03,43.17) and (320.22,42.98) .. (320.45,42.98) .. controls (320.68,42.98) and (320.87,43.17) .. (320.87,43.4) .. controls (320.87,43.63) and (320.68,43.82) .. (320.45,43.82) .. controls (320.22,43.82) and (320.03,43.63) .. (320.03,43.4) -- cycle ;
\draw   (328.03,37.4) .. controls (328.03,37.17) and (328.22,36.98) .. (328.45,36.98) .. controls (328.68,36.98) and (328.87,37.17) .. (328.87,37.4) .. controls (328.87,37.63) and (328.68,37.82) .. (328.45,37.82) .. controls (328.22,37.82) and (328.03,37.63) .. (328.03,37.4) -- cycle ;
\draw   (324.03,40.4) .. controls (324.03,40.17) and (324.22,39.98) .. (324.45,39.98) .. controls (324.68,39.98) and (324.87,40.17) .. (324.87,40.4) .. controls (324.87,40.63) and (324.68,40.82) .. (324.45,40.82) .. controls (324.22,40.82) and (324.03,40.63) .. (324.03,40.4) -- cycle ;

\draw (457.67,28.4) node [anchor=north west][inner sep=0.75pt]    {$a$};
\draw (231.87,87.67) node [anchor=north west][inner sep=0.75pt]    {$a_{j_{2}}$};
\draw (368,159) node [anchor=north west][inner sep=0.75pt]    {$v( a-a_{j_{3}})$};
\draw (279.87,50.6) node [anchor=north west][inner sep=0.75pt]    {$a_{j_{3}}$};
\draw (174.8,128.2) node [anchor=north west][inner sep=0.75pt]    {$a_{j_{1}}$};
\draw (340.8,217) node [anchor=north west][inner sep=0.75pt]    {$v( a-a_{j_{2}}) \ =\ v( a_{j_{3}} \ -\ a_{j_{2}})$};
\draw (320.13,280.2) node [anchor=north west][inner sep=0.75pt]    {$v( a-a_{j_{1}}) \ =\ v( a_{j_{2}} -\ a_{j_{1}})$};

\end{tikzpicture}
\caption{A pseudo-convergent sequence is pseudo-Cauchy}
    \label{fig:pseudoconvergentpc}
\end{figure}

We start by constructing limits of pseudo-Cauchy sequences at the cost of extending the valuation.

\begin{lemma}\label{lm:everypcsequenceisconvergent}
    Let $(a_i)_{i<\lambda}$ be a pseudo-Cauchy sequence in $K$ and let $(L,v)$ be a $\abs{\lambda}^+$-saturated elementary extension of $(K,v)$. Then there exists $a\in L$ such that $(a_i)\rsa a$ (in the valued field $(L,v)$).
\end{lemma}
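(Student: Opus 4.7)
The plan is to realize the desired pseudolimit as an element of $L$ by saturation, applied to a well-chosen partial type that encodes the pseudo-convergence condition, and then verify that any realization works. Concretely, by Remark \ref{rk:pc-sequences}(3) we may pick $i_0 < \lambda$ such that the sequence $\alpha_i := v(a_{i+1} - a_i)$ is strictly increasing for $i > i_0$ and moreover $v(a_j - a_i) = \alpha_i$ for all $i_0 < i < j < \lambda$; these $\alpha_i$ are the values we expect a pseudolimit $a$ to satisfy $v(a - a_i) = \alpha_i$.

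The key object to introduce is the partial type over the parameters from $K$ and $\Gamma$:
\[
\Sigma(x) \;:=\; \{\, v(x - a_i) \geq \alpha_i \,:\, i_0 < i < \lambda \,\}.
\]
Here $x$ is a variable of the valued field sort. Note $|\Sigma(x)| \leq |\lambda|$, so $|\lambda|^{+}$-saturation will suffice to realize $\Sigma$ in $L$ once consistency is established.

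Next I would verify finite satisfiability of $\Sigma$ directly inside $(K,v)$: given finitely many indices $i_0 < i_1 < \cdots < i_n$, the element $a_{i_n} \in K$ realizes the corresponding finite conjunction, because $v(a_{i_n} - a_{i_k}) = \alpha_{i_k}$ for $k < n$ while $v(a_{i_n} - a_{i_n}) = \infty$. By $|\lambda|^{+}$-saturation of $(L,v)$, there exists $a \in L$ realizing all of $\Sigma$.

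Finally, I would check that any such $a$ is a pseudolimit of $(a_i)$ in $L$. For $i_0 < i < j < \lambda$, one has $v(a - a_j) \geq \alpha_j > \alpha_i = v(a_j - a_i)$, so the ultrametric triangle inequality forces
\[
v(a - a_i) \;=\; v\bigl((a - a_j) + (a_j - a_i)\bigr) \;=\; \alpha_i,
\]
and hence $v(a - a_i)$ is eventually strictly increasing, which is exactly the definition of $(a_i) \rsa a$. The main conceptual obstacle is spotting the correct partial type; the temptation is to write $v(x - a_i) = \alpha_i$, which is equivalent but whose finite satisfiability is less transparent. Using $\geq \alpha_i$ makes finite satisfiability trivial via tail elements of the sequence, while the strict increase of the $\alpha_i$ then automatically upgrades inequality to equality in the realization.
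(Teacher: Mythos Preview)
Your proof is correct and follows essentially the same strategy as the paper: write down a partial type over $K$ of size $\leq |\lambda|$, witness finite satisfiability by tail elements of the sequence, and realize it by $|\lambda|^{+}$-saturation. The only cosmetic difference is that the paper uses the type $\{\,v(x-a_j)>v(x-a_i)\mid i_0<i<j\,\}$, which is the pseudoconvergence condition verbatim and so needs no post-hoc verification, whereas your choice $\{\,v(x-a_i)\geq \alpha_i\,\}$ requires the short ultrametric check at the end; both work equally well.
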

\begin{proof}
    Let $i_0<\lambda$ be as in the definition of $(a_i)_{i<\lambda}$ being a pc-sequence. Consider the set of formulas:
    \[\Delta(x) = \set{v(x-a_j)>v(x-a_i)\mid j>i>i_0}\]
    For any finite subset $\Delta_0(x)$, if $j_0$ is the maximal of the indexes of the $a_i$ appearing in $\Delta_0$, then for any $\lambda> j_1>j_0$ we have 
    \[v(a_{j_1} - a_j)>v(a_{j_1}-a_i)\]
    for all $j_0>j>i>i_0$. Hence $a_{j_0}$ satisfies $\Delta_0(x)$. As $\Delta_0$ was arbitrary, $\Delta(x)$ is finitely consistent. As the cardinality of $\Delta$ is $\abs{\lambda}$, it is satisfied in any $\abs{\lambda}^+$-saturated elementary extension of $(K,v)$.
\end{proof}

\begin{remark}[Valuation of a pc-sequence, II]\label{rk:dichotomyofvaluesofpcsequences}
If $(a_i)$ is a pc-sequence, then $(\beta_i) = v(a_i)$ is either eventually strictly increasing or eventually constant. Indeed: from Lemma \ref{lm:everypcsequenceisconvergent} $(a_i)$ is a pseudoconvergent sequence (in an extension of $(K,v)$), then conclude from Remark \ref{rk:pc-sequences} (4), since $v(a_i)$ live in $\Gamma_K$.
\end{remark}

\begin{lemma}[Formal Taylor expansion]\label{lm:taylor}
Let $P(X)\in K[X]$ of degree $\leq n$, then there exists $P_0,\ldots,P_n$ such that $P(X+Y) = \sum_{i=0}^n P_i(X)Y^i$, with $P_0(X) = P(X)$, $P_1(X) = P'(X)$ and $\deg(P_i)\leq n-i$. Moreover, if $\cO\seq K$ is a subring and $P(X)\in \cO[X]$ then $P_i(X)\in \cO[X]$.
\end{lemma}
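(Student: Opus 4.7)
The plan is to reduce the claim to monomials by linearity. Write $P(X) = \sum_{k=0}^n a_k X^k$ with $a_k \in K$ (or in $\cO$ for the moreover part). Since both sides of the identity $P(X+Y) = \sum_{i=0}^n P_i(X) Y^i$ are linear in $P$, it is enough to establish the statement for each monomial $P(X) = X^k$ with $k \leq n$, and then define $P_i(X) := \sum_{k \geq i} a_k P_i^{(k)}(X)$ where $P_i^{(k)}$ is the coefficient of $Y^i$ arising from $X^k$.

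For a monomial $P(X) = X^k$, I would invoke the binomial theorem in the polynomial ring $K[X,Y]$, which gives
\[
(X+Y)^k = \sum_{i=0}^{k} \binom{k}{i} X^{k-i} Y^i.
\]
Thus for $P(X) = X^k$ the desired $P_i^{(k)}(X)$ is $\binom{k}{i} X^{k-i}$ when $i \leq k$ and $0$ otherwise. Summing over $k$, for $P(X) = \sum_k a_k X^k$ one obtains
\[
P(X+Y) = \sum_{i=0}^{n} \Bigl( \sum_{k \geq i} a_k \binom{k}{i} X^{k-i} \Bigr) Y^i,
\]
so one sets $P_i(X) = \sum_{k \geq i} a_k \binom{k}{i} X^{k-i}$.

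It then remains to verify the stated properties. Setting $Y = 0$ gives $P_0(X) = P(X)$. Differentiating the identity with respect to $Y$ and then setting $Y = 0$ yields $P_1(X) = \frac{\partial}{\partial Y} P(X+Y)\big|_{Y=0} = P'(X)$ (by the chain rule, or just by reading off the coefficient of $Y$ in the binomial expansion, which produces $\sum_k k a_k X^{k-1} = P'(X)$). The degree bound $\deg P_i \leq n - i$ is immediate from the explicit formula, since each term $a_k \binom{k}{i} X^{k-i}$ has degree $k - i \leq n - i$.

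For the ``moreover'' part, the binomial coefficients $\binom{k}{i}$ are integers, hence they lie in every subring $\cO \subseteq K$ (containing $1$). Consequently, if every $a_k$ lies in $\cO$, so do all coefficients of each $P_i$. I do not expect any real obstacle here: the whole statement is a formal polynomial identity whose only content is the indexing and the degree bookkeeping.
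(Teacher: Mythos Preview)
Your proof is correct and follows exactly the approach the paper sketches: reduce to monomials $X^k$ via $K$-linearity, apply the binomial theorem to get $P_i^{(k)}(X) = \binom{k}{i}X^{k-i}$, and then read off $P_0 = P$, $P_1 = P'$, the degree bound, and the integrality of the binomial coefficients for the ``moreover'' clause. The paper leaves this as an exercise with precisely this hint, so there is nothing to add.
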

\begin{proof}
    This is left as an exercise. Prove that for $P(X) = X^n $ we have $P_i(X) = C^{i}_n X^{n-i}$ and extend to arbitrary $P$ by $K$-linearity. In characteristic $0$, this is the Taylor expansion, $P_i(X) = \frac{P^{(i)}(X)}{i!}$. 
\end{proof}

\begin{theorem}[Polynomials are continuous]\label{thm:polynomialscontinuous}
    Let $(a_i)_{i<\lambda}$ be a sequence of elements in $K$, $a\in K$ and let $P(X)\in K[X]$ be a nonconstant polynomial. If $(a_i)\rsa a$ then $(P(a_i))\rsa P(a)$. In particular if $(a_i)$ is a pc-sequence then $(P(a_i))$ is a pc-sequence.
\end{theorem}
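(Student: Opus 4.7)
The plan is to prove the first statement by induction on $\deg P$, using the factorization $P(X) - P(a) = (X-a) T(X)$ with $T \in K[X]$ of degree $\deg(P) - 1$. Plugging in $X = a_i$ immediately gives
\[ v(P(a_i) - P(a)) = v(a_i - a) + v(T(a_i)),\]
so it suffices to show that the right-hand side is eventually strictly increasing.

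The base case $\deg(P) = 1$ is immediate: $T$ is a nonzero constant, and $v(a_i - a) + v(T)$ is eventually strictly increasing by hypothesis on $(a_i)$. For the inductive step $\deg P \geq 2$, the polynomial $T$ is nonconstant of strictly smaller degree, so the induction hypothesis yields $(T(a_i)) \rsa T(a)$. Applying Remark \ref{rk:pc-sequences}(4) to the sequence $(T(a_i))$, the sequence $v(T(a_i))$ is eventually either strictly increasing or constant, hence in either case eventually non-decreasing. Adding $v(T(a_i))$ to the eventually strictly increasing sequence $v(a_i - a)$ produces an eventually strictly increasing sum, giving $(P(a_i)) \rsa P(a)$.

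For the ``in particular'' clause, if $(a_i)$ is a pc-sequence but has no pseudolimit in $K$, we appeal to Lemma \ref{lm:everypcsequenceisconvergent} to produce a pseudolimit $a$ in some sufficiently saturated elementary extension $(L, v)$ of $(K, v)$. Applying the first statement inside $(L, v)$ yields $(P(a_i)) \rsa P(a)$, and Remark \ref{rk:pc-sequences}(2) then shows $(P(a_i))$ is a pc-sequence in $(L, v)$. Since the pc-sequence condition only involves valuations $v(P(a_{j_3}) - P(a_{j_2}))$ of differences of elements of $K$, which lie in $\Gamma_K$, the conclusion descends back to $(K, v)$.

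The main obstacle is the higher-multiplicity case $P'(a) = 0$ in the inductive step, where one might fear that $v(T(a_i))$ misbehaves. This is absorbed uniformly by Remark \ref{rk:pc-sequences}(4): when $T(a) = 0$, the sequence $v(T(a_i))$ is eventually strictly increasing by the very definition of pseudoconvergence to $0$; when $T(a) \neq 0$, it is eventually either constant equal to $v(T(a))$ or strictly increasing and bounded by $v(T(a))$. In all cases $v(T(a_i))$ is eventually non-decreasing, which is exactly what is needed to conclude.
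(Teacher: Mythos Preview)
Your proof is correct and takes a genuinely different route from the paper's. The paper expands via the formal Taylor formula $P(a_i)-P(a)=\sum_{j\ge 1}P_j(a)(a_i-a)^j$ and then proves a small combinatorial claim: among finitely many affine functions $\gamma\mapsto \beta_j+j\gamma$ with distinct slopes, one is eventually strictly minimal along any strictly increasing sequence $(\gamma_i)$. This pins down the eventual value exactly as $v(P(a_i)-P(a))=\beta_{j_0}+j_0\,v(a_i-a)$ for a fixed $j_0$. Your argument instead inducts on $\deg P$ through the factorization $P(X)-P(a)=(X-a)T(X)$, and the whole analysis of the Taylor terms is replaced by a single appeal to Remark~\ref{rk:pc-sequences}(4) applied to $(T(a_i))\rsa T(a)$. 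This is shorter and avoids both Lemma~\ref{lm:taylor} and the affine-functions claim; the trade-off is that you only get that $v(P(a_i)-P(a))$ is eventually strictly increasing, without the explicit affine shape $\beta_{j_0}+j_0\gamma_i$. That extra precision is not used later in the paper, so nothing is lost for the purposes of the course. Your treatment of the ``in particular'' clause (pass to a saturated extension via Lemma~\ref{lm:everypcsequenceisconvergent}, then descend) is the intended one.
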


\begin{proof} We start with a claim.
    \begin{claim}
        Let $n\in \N^{>0}$, $\beta_1,\ldots,\beta_n\in \Gamma$, $m_1,\ldots,m_n$ distinct elements of $\N^{>0}$. Let $f_i:\Gamma\to \Gamma$ the function $f_i(\gamma) = \beta_i+m_i\gamma$. Let $(\gamma_j)$ be a strictly increasing sequence indexed by a limit ordinal. Then there exists $i_0\in I$ such that $f_{i_0}(\gamma_j)<f_i(\gamma_j)$ eventually (in $j$) for all $i\in I\setminus \set{i_0}$
    \end{claim}
    \begin{proof}[Proof of the claim]
        This is an easy exercise, by induction on $n$.
    \end{proof}
    Let $P$ be nonconstant of degree $n$, by Lemma \ref{lm:taylor} there exists $(P_i)$ such that
    \[P(X+Y) = P(X)+P_1(X)Y+\ldots+P_n(X)Y^n\]
    in $K[X,Y]$. Substitute $X$ with $a$ and $Y$ with $a-a_i$ we get:
    \[P(a_i)-P(a) = \sum_{i = 1}^n P_i(a)(a_i-a)^i.\]
    Similarly $P(X) = P(a+(X-a)) = P(a)+\sum_{i = 1}^n P_i(a)(X-a)^i$, hence as $P$ is nonconstant, $P_{i_0}(a)\neq 0$ for some $1\leq i_0\leq n$. Let $\beta_i = v(P_{i_0}(a))$ and $\gamma_j = v(a_j-a)$, we have $v(P_{i_0}(a)(a_{j}-a)^{i_0}) = \beta_{i_0}+i\gamma_j$. By the claim there exists $1\leq j_0\leq n$ such that for every $1\leq j\leq n$ with $j\neq j_0$, we have $\beta_{j_0}+j_0 \gamma_i< \beta_{j}+j \gamma_i $ eventually (in $i$). Thus $v(P(a_i)-P(a))=\beta_{j_0}+j_0 \gamma_i$ eventually. As $(\gamma_i)$ is eventually strictly increasing (since $(a_i)\rsa a$), so is $v(P(a_i)-P(a))$ hence $P(a_i)\rsa P(a)$.
\end{proof}

\begin{remark}
    Let $(a_i)$ be a pc-sequence in $K$ and let $P\in K[X]\setminus \set{0}$. By Theorem \ref{thm:polynomialscontinuous}, $(P(a_i))$ is a pc-sequence hence by Remark \ref{rk:dichotomyofvaluesofpcsequences} the sequence $(v(P(a_i))$ is either eventually strictly increasing or eventually constant.
\end{remark}

\begin{definition}[Algebraic type, transcendental type]
    A pc-sequence $(a_i)$ in $K$ is \textit{of transcendental type over $K$} if for all $P\in K[X]\setminus\set{0}$ the sequence $v(P(a_i))$ is eventually constant. Otherwise $(a_i)$ is of \textit{algebraic type} over $K$.
\end{definition}


\begin{remark}\label{rk:transcendentaltype12}
    If $(a_i)$ is a pc-sequence of transcendental type over $K$, then:
    \begin{enumerate}
        \item $(a_i)$ has no pseudolimit in $K$. Indeed, if $(a_i)\rsa a\in K$, then consider $X-a\in K[X]\setminus \set{0}$ to reach a contradiction.
        \item the eventual valuation of $(P(a_i))$ is never $\infty$. If this happens, then $(P(a_i))$ is eventually constant equal to $0$ but such sequence is not pseudo-Cauchy, contradicting Theorem \ref{thm:polynomialscontinuous}.
        \item A pc-sequence $(a_i)$ is of transcendental type over $K$ if and only if $(P(a_i))$ does not pseudoconverges to $0$, for any nonconstant $P(X)\in K[X]$.
    \end{enumerate}
\end{remark}

\begin{theorem}\label{thm:uniquevaluationpctranscendental}
    Let $(a_i)$ be a pc-sequence in $(K,v)$ of transcendental type over $K$. Let $L = K(X)$ be the field of rational functions over $K$. Then the valuation $v$ extends uniquely to a valuation $v:L\to \Gamma\cup\set{\infty}$ such that 
    \[v(P):= \text{eventual value of $(v(P(a_i)))$}\]
    for each $P\in K[X]$. Further, $(L,v))$ is an immediate valued field extension of $(K,v)$ ($\Gamma_L = \Gamma_K$ and $k_L = k_K$) and $(a_i)\rsa X$. Conversely, if $(a_i)\rsa a$ in a valued field extension of $(K,v)$ then $a$ is transcendental and the field isomorphism $K(X)\to K(a)$ over $K$ sending $X$ to $a$ is a valued field isomorphism.
\end{theorem}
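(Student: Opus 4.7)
The plan is to construct the valuation on $K[X]$ directly from the given formula, extend it to $L=K(X)$, verify that $(a_i)\rsa X$, show the extension is immediate, and then deduce the converse by a direct transport argument.

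\emph{Step 1 (Existence of the extension).} For $P\in K[X]\setminus\set{0}$, set $v(P)$ to be the eventual value of $v(P(a_i))$; by the assumption that $(a_i)$ is of transcendental type together with Remark~\ref{rk:transcendentaltype12}(2), this is a well-defined element of $\Gamma$, and I put $v(0)=\infty$. To see this is a valuation on $K[X]$, I pass the pointwise identities $v(P(a_i)Q(a_i))=v(P(a_i))+v(Q(a_i))$ and $v((P+Q)(a_i))\geq \min\set{v(P(a_i)),v(Q(a_i))}$ to their eventual values. It then extends uniquely to the fraction field $L$ by $v(P/Q):=v(P)-v(Q)$. Restricting to constant polynomials recovers the original valuation on $K$, and the construction forces $v(L^\times)\subseteq \Gamma_K$, so we already obtain $\Gamma_L=\Gamma_K$.

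\emph{Step 2 ($(a_i)\rsa X$ and uniqueness).} Applying the formula to the polynomial $X-a_j\in K[X]$: by Remark~\ref{rk:pc-sequences}(3), for large $j$ the values $v(a_i-a_j)$ are eventually constant in $i$, equal to $\alpha_j:=v(a_{j+1}-a_j)$. So $v(X-a_j)=\alpha_j$, and since $(\alpha_j)$ is eventually strictly increasing, $(a_i)\rsa X$ in $(L,v)$. For uniqueness, any valuation $w$ on $L$ extending $v$ and with $(a_i)\rsa X$ must, by Theorem~\ref{thm:polynomialscontinuous}, satisfy $w(P(X))=$ eventual value of $w(P(a_i))=v(P(a_i))$, which is the formula above; hence $w=v$.

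\emph{Step 3 (Immediate extension: $k_L=k_K$).} This is the main technical point. Fix $f\in \cO_L^\times$ and write $f=P/Q$ with $P,Q\in K[X]$ and $v(P)=v(Q)$. For all sufficiently large $i$ we have $v(P(a_i))=v(P)$ and $v(Q(a_i))=v(Q)$. Compute
\[
f-\frac{P(a_i)}{Q(a_i)} = \frac{P\cdot (Q(a_i)-Q) - Q\cdot (P(a_i)-P)}{Q\cdot Q(a_i)}.
\]
Since $(a_i)\rsa X$ in $(L,v)$, Theorem~\ref{thm:polynomialscontinuous} yields $P(a_i)\rsa P$ and $Q(a_i)\rsa Q$ in $L$, so $v(P-P(a_i))$ and $v(Q-Q(a_i))$ are eventually strictly increasing. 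For $i$ large enough the numerator has valuation strictly greater than $v(Q\cdot Q(a_i))=2v(Q)$, so $v(f-P(a_i)/Q(a_i))>0$. Therefore the residue of $f$ in $k_L$ equals that of $P(a_i)/Q(a_i)\in K$, which lies in $k_K$. Combined with Step~1, $(L,v)$ is an immediate extension of $(K,v)$.

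\emph{Step 4 (Converse).} Suppose $(a_i)\rsa a$ in some valued extension $(K',v)$ of $(K,v)$. If $a$ were algebraic with minimal polynomial $P\in K[X]$, then by Theorem~\ref{thm:polynomialscontinuous} $P(a_i)\rsa P(a)=0$, making $v(P(a_i))$ eventually strictly increasing and unbounded, contradicting Remark~\ref{rk:transcendentaltype12}(3). Hence $a$ is transcendental, and $\iota\colon K(X)\to K(a)$, $X\mapsto a$, is a field isomorphism over $K$. For each nonzero $P\in K[X]$, $P(a_i)\rsa P(a)$ and Remark~\ref{rk:pc-sequences}(4) force $(v(P(a_i)))$ to be either eventually strictly increasing (excluded by transcendental type) or eventually constant with common value $v(P(a))$. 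The eventual value is thus $v(P(a))=v(P)$, so $\iota$ transports the valuation constructed in Step~1 onto the valuation on $K(a)\subseteq K'$, proving it is a valued field isomorphism. The main obstacle is the careful double approximation in Step~3 ensuring $k_L=k_K$; everything else is a fairly direct unfolding of the definitions and the continuity theorem.
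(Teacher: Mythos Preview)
Your proof is correct and follows essentially the same route as the paper's: define $v$ on $K[X]$ via eventual values, extend to $K(X)$, verify $(a_i)\rsa X$, use polynomial continuity to obtain $k_L=k_K$, and argue the converse by transport. The only minor variations are that in Step~3 you treat the fraction $f-P(a_i)/Q(a_i)$ directly, whereas the paper first normalizes to $v(P)=v(Q)=0$ by multiplying numerator and denominator by some $r\in K$ (using $\Gamma_L=\Gamma_K$) and then applies the polynomial case, and that your Step~2 supplies an explicit uniqueness argument which the paper leaves implicit.
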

\begin{proof}
    One easily checks that $v(P)$ thus defined is indeed a valuation extending $v$. For instance, if $P = QR\in K[X]\setminus \set{0}$, then let $\alpha,\beta,\gamma $ be the eventual valuations of $(P(a_i))$, $(Q(a_i))$, $(R(a_i))$ respectively. Then for some $i_0 $ we have $P(a_j) = \alpha$, $Q(a_j) = \beta$, $R(a_j) = \gamma$ for all $j>i_0$. As $P(a_j), Q(a_j),R(a_j)$ are elements of $K$ and $P(a_j) = Q(a_j)R(a_j)$ (by the universal property of polynomials) we have $v(P(a_j)) = v(Q(a_j))+v(R(a_j))$ hence $\alpha = \beta+\gamma$, i.e. $v(QR) = v(Q)+v(R)$. We let the other properties of a valuation to check as an exercise. Note that the function $v$ defined on $K[X]$ extends uniquely to $K(X)$ by setting $v(P/Q) = v(P)-v(Q)$. The value group of $(L,v)$ is clearly $\Gamma_K$ as the eventual value of $(P(a_i))$ is the value of $P(a_{j})\in K$ for some big enough $j$. 
    
    We check that $k_L = k_K$. Suppose first that $P\in K[X]$ is such that $v(P) = 0$. As $(a_i)\rsa a$ we have by Theorem \ref{thm:polynomialscontinuous} that $(P(a_i))\rsa P$ so $v(P-P(a_i))$ is eventually strictly increasing. Eventually, $0 = v(P) = v(P(a_i))$ hence $v(P-P(a_i))\geq \min\set{v(P),v(P(a_i)}\geq 0$. As $v(P-P(a_i))$ is eventually strictly increasing, we have $v(P-P(a_i))>0$ eventually, i.e. $\res(P) = \res(b)\in k_K$ for $b=P(a_i)\in \cO_K^\times$. For any $P/Q\in L$ with $v(P/Q) = 0$, as $\Gamma_L = \Gamma_K$ there exists $r\in K$ such that $v(r) = -v(P) = -v(Q)$ so that $P/Q = (rP)/(rQ)$ with $rP,rQ\in K[X]$ with $v(rP) = v(rQ) = 0$. From above, there exists $b_1,b_2\in \cO_K^\times$ with $\res(b_1) = \res(rP)$ and $\res(b_2) = \res(rQ)$. As $v(rQ) = v(b_2)= 0$, $rQ$ and $b_2$ are invertible in $\cO_L^\times$ and $\res((rQ)^{-1}) = \res(b_2)^{-1}$ so that we can apply the ring homomorphism: $\res((rP)/(rQ)) = \res((rP)(rQ)^{-1}) = \res(rP)\res((rQ)^{-1}) = \res(b_1)\res(b_2)^{-1}\in k_K$.

    We check that $(a_i)\rsa X$. By definition, $v(X-a_i)$ is the eventual valuation of $(a_j-a_i)_j$ which is $\alpha_i = v(a_{i+1}-a_i)$ and the sequence $(\alpha_i)$ is eventually strictly increasing by Remark \ref{rk:pc-sequences} (3).

    Finally, assume that $(a_i)\rsa a$ is a valued field extension of $K$. For any $P\in K[X]\setminus K$ we have $(P(a_i))\rsa P(a)$ hence $v(P(a)) = v(P(a_i))$ eventually. By Remark \ref{rk:transcendentaltype12} (2), $v(P(a))\neq \infty $ hence $P(a)\neq 0$ i.e. $a$ is transcendental over $K$. It then follows clearly that the field isomorphism between $K(X)$ and $K(a)$ is a valued field isomorphism.
\end{proof}

The algebraic counterpart is:
\begin{theorem}\label{thm:algebraicpcimmediate}
    Let $(a_i)$ be a pc-sequence in $(K,v)$ of algebraic type over $K$ without pseudolimit in $K$. Then $(K,v)$ admits a proper immediate algebraic extension of valued fields.
\end{theorem}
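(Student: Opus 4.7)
The plan is to build an immediate algebraic extension by adjoining a root of a carefully chosen minimal polynomial. Since $(a_i)$ is of algebraic type, by Remark \ref{rk:transcendentaltype12}(3) there exists a nonconstant $P(X)\in K[X]$ such that $(P(a_i))\rsa 0$. Choose such a $P$ of minimal degree. First I would argue that $\deg P \geq 2$: if $P(X)=cX+d$ with $c\neq 0$, then $(a_i+d/c)\rsa 0$ would force $(a_i)\rsa -d/c\in K$, contradicting the hypothesis. Next I would check that $P$ is irreducible: if $P = P_1 P_2$ with both $P_j$ of smaller degree, then $v(P(a_i)) = v(P_1(a_i)) + v(P_2(a_i))$ is eventually strictly increasing, and since each $(v(P_j(a_i)))$ is either eventually constant or eventually strictly increasing (Remark \ref{rk:dichotomyofvaluesofpcsequences} applied to the pc-sequence $(P_j(a_i))$, cf.\ Theorem \ref{thm:polynomialscontinuous}), at least one of them must be strictly increasing, i.e.\ $(P_j(a_i))\rsa 0$, contradicting minimality.

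Fix a root $a$ of $P$ in $K^{\alg}$, set $L = K(a)$. Every nonzero element of $L$ can be written uniquely as $S(a)$ with $S\in K[X]$, $S\neq 0$ and $\deg S < \deg P$ (using that $P$ is the minimal polynomial of $a$). I would define
\[w(S(a)) := \text{eventual value of } v(S(a_i)), \qquad 0\neq S\in K[X],\ \deg S < \deg P,\]
and extend by $w(0)=\infty$. This is well-defined: by minimality of $\deg P$, we cannot have $(S(a_i))\rsa 0$, so by the above dichotomy $(v(S(a_i)))$ is eventually constant; and the eventual value is not $\infty$ since $(a_i)$ is eventually injective (as $v(a_{i+1}-a_i)$ is eventually strictly increasing, hence finite) so only finitely many $a_i$ can be roots of $S$.

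The core verification is that $w$ is a valuation extending $v$. Extension of $v$ is immediate from the definition applied to constant polynomials. For the ultrametric inequality, if $\deg S_1, \deg S_2 < \deg P$ then $\deg(S_1+S_2)<\deg P$, so $w(S_1(a)+S_2(a))$ is the eventual value of $v(S_1(a_i)+S_2(a_i))\geq \min\{v(S_1(a_i)), v(S_2(a_i))\}$. The main obstacle is multiplicativity, since $S_1 S_2$ may have degree $\geq \deg P$; I would reduce $S_1 S_2 = PR + S_3$ with $\deg S_3 < \deg P$, so $S_1(a)S_2(a) = S_3(a)$ in $L$. Since $v(P(a_i))\to\infty$ and $v(R(a_i))$ is eventually bounded below (by the same dichotomy applied to $(R(a_i))$), we have $v(P(a_i)R(a_i))\to\infty$, so eventually $v(S_3(a_i)) = v(S_1(a_i)S_2(a_i)) = v(S_1(a_i)) + v(S_2(a_i))$, giving $w(S_3(a)) = w(S_1(a)) + w(S_2(a))$ as required.

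Finally I would verify $(L,w)$ is a proper immediate algebraic extension. Properness and algebraicity are clear since $[L:K] = \deg P \geq 2$. The value group is $\Gamma$ by construction. For the residue field, I first observe $(a_i)\rsa a$ in $L$: since $\deg(X-a_i)=1<\deg P$, we get $w(a-a_i)$ equals the eventual value of $v(a_j - a_i)$, which is $v(a_{i+1}-a_i)$, and this is eventually strictly increasing in $i$. Then, given any $S(a)\in L$ with $w(S(a))=0$, Theorem \ref{thm:polynomialscontinuous} applied in $(L,w)$ gives $(S(a_i))\rsa S(a)$, so eventually $w(S(a)-S(a_i))>0 = v(S(a_i))$, whence $\res_L(S(a)) = \res(S(a_i))\in k_K$. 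This proves $k_L = k_K$ and completes the construction.
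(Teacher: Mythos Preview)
Your approach matches the paper's (which defers the details to Exercise~\ref{exo:algebraicpcsequence}): choose $P$ of minimal degree with $(P(a_i))\rsa 0$, show it is irreducible of degree $\geq 2$, adjoin a root $a$, define $w$ on $K(a)$ via eventual values, and handle multiplicativity through Euclidean division by $P$. The overall structure and the immediacy argument are correct.

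There is, however, one genuine slip in the multiplicativity step. You assert ``$v(P(a_i))\to\infty$'', but $(P(a_i))\rsa 0$ only gives that $v(P(a_i))$ is eventually \emph{strictly increasing}, not that it is cofinal in $\Gamma$ (for instance, in $\Gamma=\Q$ one can have $v(P(a_i))$ increasing towards a finite supremum). Hence you cannot directly conclude that eventually $v(P(a_i)R(a_i)) > v(S_1(a_i)S_2(a_i))$. The repair is short. Note first that $\deg R<\deg P$, so (when $R\neq 0$) the sequence $v(R(a_i))$ is eventually constant by minimality of $\deg P$, and therefore $v(P(a_i)R(a_i))$ is eventually strictly increasing. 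Let $\sigma$ and $\tau$ be the eventual (constant) values of $v(S_1(a_i))+v(S_2(a_i))$ and $v(S_3(a_i))$ respectively. If $\sigma\neq\tau$ then eventually
\[
v\bigl(P(a_i)R(a_i)\bigr)=v\bigl(S_1(a_i)S_2(a_i)-S_3(a_i)\bigr)=\min\{\sigma,\tau\},
\]
a constant, contradicting the previous sentence. Hence $\sigma=\tau$, which is exactly $w(S_1(a))+w(S_2(a))=w(S_1(a)S_2(a))$. With this correction your proof goes through.
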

\begin{proof}
    The proof is very similar to the previous case, a little more complicated. We leave it as an exercise where more details about uniqueness of the immediate extension are given, see Exercise \ref{exo:algebraicpcsequence}.
\end{proof}

\begin{corollary}\label{cor:henselianchar0noalgebraic}
    Let $(K,v)$ be a Henselian valued field of residue characteristic $0$. Let $(a_i)$ be any pc-sequence without pseudo-limit in $K$, then $(a_i)$ is of transcendental type.
\end{corollary}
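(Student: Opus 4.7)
The plan is to argue by contradiction, using the two results that have already been established in the excerpt: Corollary \ref{cor:henselianchar0algmax} (a Henselian valued field with residue characteristic $0$ has no proper immediate algebraic extension) and Theorem \ref{thm:algebraicpcimmediate} (an algebraic-type pc-sequence without pseudo-limit in $K$ yields a proper immediate algebraic extension of $(K,v)$).

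More concretely, I would suppose toward a contradiction that $(a_i)$ is of algebraic type over $K$. Since by hypothesis $(a_i)$ has no pseudo-limit in $K$, Theorem \ref{thm:algebraicpcimmediate} supplies a proper immediate algebraic extension $(L,v)$ of $(K,v)$, i.e.\ an algebraic extension $L \supsetneq K$ with $k_L = k_K$ and $\Gamma_L = \Gamma_K$. On the other hand, $(K,v)$ is Henselian of residue characteristic $0$, so Corollary \ref{cor:henselianchar0algmax} says that $(K,v)$ admits no such proper immediate algebraic extension. This contradicts the existence of $(L,v)$, and therefore $(a_i)$ must be of transcendental type over $K$.

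There is essentially no obstacle here since all of the work has been done upstream. The only thing one needs to be a bit careful about is matching the hypotheses: Theorem \ref{thm:algebraicpcimmediate} demands a pc-sequence of algebraic type without pseudo-limit (which is exactly the setup produced by the contradiction hypothesis together with the corollary's assumption), and Corollary \ref{cor:henselianchar0algmax} demands Henselianity together with $\chara(k) = 0$ (which is the corollary's standing hypothesis). So the proof is really just the observation that these two results are contradictory whenever an algebraic-type pc-sequence without pseudo-limit is assumed to exist.
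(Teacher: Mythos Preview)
Your proof is correct and follows exactly the same route as the paper: assume for contradiction that $(a_i)$ is of algebraic type, invoke Theorem~\ref{thm:algebraicpcimmediate} to obtain a proper immediate algebraic extension, and contradict Corollary~\ref{cor:henselianchar0algmax}.
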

\begin{proof}
    Otherwise, $(a_i)$ would be of algebraic type and Theorem \ref{thm:algebraicpcimmediate} would contradicts that $(K,v)$ has no proper immediate algebraic extension (Corollary \ref{cor:henselianchar0algmax}).
\end{proof}

\begin{proposition}\label{prop:immediateextensionpseudocauchylimit}
    Let $(K,v)\seq (L,v)$ be a valued field extension which is immediate (i.e. $\Gamma_K = \Gamma_L$ and $k_K = k_L$). Let $a\in L\setminus K$ then there is a limit ordinal $\lambda$ and a sequence $(a_i)_{i<\lambda}$ of elements of $K$ such that 
    \begin{enumerate}
        \item $(a_i)_{i<\lambda}$ has no pseudo-limit in $K$;
        \item $(a_i)\rsa a$.
    \end{enumerate}
    If $(K,v)$ is Henselian of residue characteristic $0$ then $(a_i)$ is of transcendental type.
\end{proposition}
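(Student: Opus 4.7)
The plan is to construct the sequence $(a_i)_{i<\lambda}$ by transfinite recursion, using approximations to $a$ from $K$ whose ``distance'' to $a$ (measured by the valuation) is forced to grow by the immediacy hypothesis. Let
\[\Gamma_a := \set{v(a-b)\mid b\in K}\seq \Gamma_L\cup\set{\infty}.\]
Since $a\notin K$, we have $\infty\notin \Gamma_a$, and by immediacy $\Gamma_a\seq \Gamma_K$. The key step (and the main obstacle) is to show that $\Gamma_a$ has no maximum element in $\Gamma_K$. This is where both $\Gamma_K = \Gamma_L$ and $k_K = k_L$ are used.

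Suppose for contradiction that $\gamma_0 := v(a-b_0) = \max \Gamma_a$ for some $b_0\in K$. Set $c = a-b_0\in L\setminus K$; then $v(c) = \gamma_0\in \Gamma_K$. Using $\Gamma_K=\Gamma_L$, pick $d\in K^\times$ with $v(d)=\gamma_0$, so that $v(c/d)=0$. Using $k_K=k_L$, pick $e\in \cO_K^\times$ with $\res(e)=\res(c/d)$. Then $v(c/d-e)>0$, i.e.
\[v(c-ed)=v(d)+v(c/d-e)>\gamma_0.\]
Setting $b_1 := b_0+ed\in K$, we obtain $v(a-b_1)=v(c-ed)>\gamma_0$, contradicting the maximality of $\gamma_0$.

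With this in hand, let $\lambda$ be the cofinality of $\Gamma_a$ (a limit ordinal, because $\Gamma_a$ has no maximum) and pick a cofinal strictly increasing sequence $(\gamma_i)_{i<\lambda}\seq \Gamma_a$. For each $i<\lambda$ choose $a_i\in K$ with $v(a-a_i)=\gamma_i$. By construction $v(a-a_i)=\gamma_i$ is strictly increasing in $i$, so $(a_i)\rsa a$, proving (2). For (1), suppose that $(a_i)\rsa b$ for some $b\in K$. By Remark \ref{rk:pc-sequences}(1) (applied inside any valued field extension containing both $a$ and $b$) we would have $v(a-b)>v(a-a_i)=\gamma_i$ eventually in $i$, so $v(a-b)\in \Gamma_a$ would be a strict upper bound for a cofinal tail of $(\gamma_i)$; this contradicts cofinality of $(\gamma_i)$ in $\Gamma_a$.

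Finally, the last clause is immediate: if $(K,v)$ is Henselian of residue characteristic $0$, then by Corollary \ref{cor:henselianchar0noalgebraic} any pc-sequence without pseudo-limit in $K$ is of transcendental type, which applies to our $(a_i)$ by (1).
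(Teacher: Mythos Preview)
Your proof is correct and follows essentially the same approach as the paper: define the set of approximation values $\Gamma_a=\set{v(a-b)\mid b\in K}$, use immediacy (first $\Gamma_K=\Gamma_L$ to rescale, then $k_K=k_L$ to match residues) to show it has no maximum, pick a cofinal increasing sequence to obtain $(a_i)\rsa a$, and rule out a pseudo-limit in $K$ via Remark~\ref{rk:pc-sequences}(1) and cofinality. The only cosmetic difference is that you explicitly take $\lambda=\cf(\Gamma_a)$, whereas the paper simply chooses any cofinal strictly increasing sequence indexed by a limit ordinal.
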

\begin{proof}
    Let $I = \set{v(a-c)\mid c\in K}$. We claim that $I$ has no greater element. If $v(a-c)\in I$, then as $\Gamma_K = \Gamma_L$ there exists $b\in K$ such that $v(a-c) = v(b)$. Then $v((a-c)b^{-1}) = 0$ and as $k_K = k_L$ there exists $d\in \cO_K^\times$ such that $\res((a-c)b^{-1}) = \res(d)$, i.e. $v((a-c)b^{-1} - d)>0$. Then 
    \begin{align*}
        v(a-c - bd) &= v[b(\frac{a-c}{b} - d)]\\
        &= v(b)+\underbrace{v((a-c)b^{-1} - d)}_{>0}\\
        &> v(b) = v(a-c).
    \end{align*}
    For $c' = c-bd\in K$ we have $v(a-c')>v(a-c)$, so $I$ has no greatest element. Choose a sequence $(a_i)_{i<\lambda}$ of element of $K$ such that $\set{v(a-a_i)}$ is strictly increasing and cofinal in $I$, then $(a_i)_{i<\lambda}\rsa a$. By contradiction if there exists $c\in K$ such that $(a_i)\rsa c$, then by Remark \ref{rk:pc-sequences} (1), $v(a-c)>v(a-a_i)$ contradicting cofinality. The last assumption is by Corollary \ref{cor:henselianchar0noalgebraic}.
\end{proof}

\begin{remark}
    Combining Theorem \ref{thm:uniquevaluationpctranscendental}, \ref{thm:algebraicpcimmediate} and Proposition \ref{prop:immediateextensionpseudocauchylimit}, we also obtain the following result of Kaplanski \cite{kaplanskimaximalvalued}: a valued field $(K,v)$ is \textit{maximal} (i.e. it has no proper immediate extension, equivalently any valued field extension extends the residue field or the value group) if and only if every pc-sequence in $(K,v)$ admits a limit in $K$.
\end{remark}

\begin{remark}
    Following on the previous remark, a valued field is called \textit{algebraically maximal} if it admits no proper immediate algebraic extension. Then a valued field $(K,v)$ is algebraically maximal if and only if every pc sequence of algebraic type in $K$ has a limit in $K$. The `if' follows from Proposition \ref{prop:immediateextensionpseudocauchylimit} and Exercise \ref{exo:algebraicpcsequence} and the `only if' follows from Theorem \ref{thm:algebraicpcimmediate}.
    By Corollary \ref{cor:henselianchar0algmax} any Henselian field of residue characteristic $0$ is algebraically maximal and it can be shown that in residue characteristic $0$, being Henselian is equivalent to being algebraically maximal.
\end{remark}

\begin{exercise}\label{exo:differentlimits}
Prove that if $(a_i)\rsa a$ then for all $b$ we have $(a_i)\rsa b$ if and only if $v(a-b)> v(a-a_i)$ eventually.
\end{exercise}

\begin{exercise}\label{exo:algebraicpcsequence}
Let $(a_i)$ be a pc-sequence in $(K,v)$ of algebraic type over $K$ without pseudolimit in $K$. Then $(K,v)$ admits an immediate algebraic extension of valued fields $(L,v)$. Let $P(X)$ be of minimal degree $d$ such that $(v(P(a_i)))$ is eventually strictly increasing.
\begin{enumerate}
    \item Prove that $P$ is irreducible and of degree $\geq 2$.
\end{enumerate}
Let $a$ be a root of $P$ in an extension of $(K,v)$ and let $L = K(a)$. For any polynomial $R(X)\in K[X]$ of degree $<d$, the sequence $v(P(a_i))$ is eventually constant, hence define:
    \[v(R(a)) = \text{eventual value of $v(R(a_i))$}.\]
\begin{enumerate}
\setcounter{enumi}{1}
    \item  Prove that $v$ is a well-defined function on $L^\times$.
    \item Prove that $v$ defines a valuation on $L$ (\textit{Hint}: To prove that $v(S(a)T(a)) = v(S(a))+ v(T(a))$, consider the Euclidean division by $P$: $S(X)T(X) = P(X) Q(X)+R(X)$ with $\deg(R)<d$, then $S(a)T(a) = R(a)$).
    \item Conclude by checking that $\Gamma_L = \Gamma_K$ and $k_L = k_K$ (\textit{Hint.} Proceed as in the proof of Theorem \ref{thm:uniquevaluationpctranscendental}.
    \item (\textit{Bonus}) If $b$ is another root of $P$ in an extension, prove that there is a valued field isomorphism $K(a)\to K(b)$ over $K$ sending $a$ to $b$.
    \end{enumerate}
\end{exercise}

\begin{exercise}\label{exo:pcsequencealgebraictype}
    Assume that $(K(a),v)$ is a proper algebraic immediate extension of $(K,v)$ and $(a_i)$ a pc-sequence of $K$ as in Lemma \ref{prop:immediateextensionpseudocauchylimit} with $(a_i)\rsa a$. Let $P$ be the minimal polynomial of $a$ over $K$.
    \begin{enumerate}
        \item Prove that $P(a_i) = (a_i-a)Q(a_i)$ for some $Q\in K(a)[X]$.
        \item Deduce that $v(P(a_i))$ is eventually strictly increasing, hence that $(a_i)$ is of algebraic type. 
    \end{enumerate}
\end{exercise}


\subsection{The idea of the proof}

Recall that we study ac-valued fields $(K,v,\ac)$ in the language $\LLdp$ of Denef-Pas, consisting of a three sorts: one sort for the field $K$ in a copy $\LLvf$ of the language of rings, one sort for the residue field $k$ in a copy $\LLres$ of the language of rings and one sort for $\Gamma\cup \set{\infty}$ in the language $\LLgp$ of ordered groups expanded by a constant $\infty$. We also have the valuation $v:K\to \Gamma\cup\set{\infty}$ and the angular component map $\ac:K\to k$.

\begin{center} 
\begin{tikzcd}
K \ar[r,"v"] \ar[dr,"\ac"] & \Gamma\cup\set{\infty}\\
\cO_K \arrow[hookrightarrow]{u}  \ar[r,"\res"] & k  
\end{tikzcd}\\
\end{center}

To prove Pas' theorem we will use the following criterion (see e.g. \cite[Lemma 4.2]{Chatzicoursvaluation}).

\begin{lemma}\label{lm:QE}
    Let $T$ be a theory in a countable language $\LL$ and $\Delta$ a set of $\LL$-formulas closed by boolean combination. 
    Let $M$ and $N$ be $\aleph_1$-saturated models of $T$. If the following holds:
    \begin{itemize}
        \item for all $f:A\to B$ isomorphism between two countable substructures $A$ of $M$ and $B$ of $N$ which preserves $\Delta$-formulas ($M\models \phi(a)\implies N\models \phi(f(a))$ for all tuple $a$ from $A$ and $\phi(x)\in \Delta$), then for any $a\in M$ there exists an isomorphism $g$ between two substructures of $M$ and $N$ respectively which extend $f$, preserves $\Delta$-formulas and has the element $a$ in its domain.
    \end{itemize}
    then every $\LL$-formula is equivalent to a $\Delta$-formula modulo $T$. Further, if given any model $M,N$ of $T$, the same $\Delta$-sentences are satisfied by $M$ and $N$, then $T$ is complete.
\end{lemma}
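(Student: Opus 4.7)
The plan is the standard back-and-forth criterion for relative quantifier elimination. First I will show that, for $\aleph_1$-saturated models $M, N \models T$, any two tuples $\bar a \in M$ and $\bar b \in N$ having the same $\Delta$-type over $\emptyset$ also have the same full $\LL$-type. A compactness argument then extracts, for each $\LL$-formula, an equivalent $\Delta$-formula modulo $T$, and the completeness statement follows instantly from quantifier elimination (if all $\Delta$-sentences agree across models of $T$, so do all $\LL$-sentences).

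The technical heart is an induction on the complexity of an $\LL$-formula $\phi(\bar x)$, showing $M \models \phi(\bar a) \iff N \models \phi(\bar b)$ whenever $\bar a, \bar b$ have matching $\Delta$-types in $\aleph_1$-saturated $M, N \models T$. The atomic case is covered because $\Delta$ is closed under boolean combinations (hence preservation of $\Delta$-formulas implies preservation of all quantifier-free $\LL$-formulas), and boolean combinations are immediate. The key case is $\phi(\bar x) = \exists y\, \chi(\bar x, y)$: given $c \in M$ with $M \models \chi(\bar a, c)$, start from the $\Delta$-preserving isomorphism $f_0$ between the countable substructures generated by $\bar a$ and $\bar b$ (such an $f_0$ exists because the $\Delta$-types coincide and $\Delta$ controls atomic formulas), then invoke the hypothesis of the lemma to extend $f_0$ to a $\Delta$-preserving isomorphism $g$ with $c$ in its domain. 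Setting $d = g(c) \in N$, the tuples $(\bar a, c)$ and $(\bar b, d)$ share the same $\Delta$-type, so by induction $N \models \chi(\bar b, d)$, providing the required witness. The reverse direction uses the same hypothesis with the roles of $M$ and $N$ swapped (the hypothesis is symmetric in this sense).

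For the quantifier elimination statement, fix an $\LL$-formula $\phi(\bar x)$ and set $p(\bar x) = \{\psi(\bar x) \in \Delta : T \models \phi \to \psi\}$. I will argue that $T \cup p \models \phi$. If not, some $N \models T$ contains $\bar b$ with $N \models p(\bar b) \wedge \neg \phi(\bar b)$; by the defining property of $p$ together with closure of $\Delta$ under boolean combinations, the partial type $\{\phi(\bar x)\} \cup \tp_\Delta(\bar b / \emptyset)$ is consistent with $T$. Realizing it in some $M \models T$ by $\bar a$, then passing $M$ and $N$ to $\aleph_1$-saturated elementary extensions, the previous step forces $N \models \phi(\bar b)$, a contradiction. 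Hence $T \cup p \models \phi$, and compactness extracts finitely many $\psi_1, \ldots, \psi_n \in p$ whose conjunction, still a $\Delta$-formula, is equivalent to $\phi$ modulo $T$. The main obstacle is keeping the substructures countable so that the hypothesis applies at each invocation in the induction: this is automatic because $\LL$ is countable, the starting tuples are finite, and we extend by one element at a time, but it is the reason $\aleph_1$-saturation (rather than stronger saturation) suffices.
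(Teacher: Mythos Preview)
The paper does not supply a proof of this lemma; it merely cites an external reference, so there is no in-paper argument to compare against. Your argument is the standard one and is essentially correct, with one genuine gap. You assert that ``$\Delta$ is closed under boolean combinations (hence preservation of $\Delta$-formulas implies preservation of all quantifier-free $\LL$-formulas)'' and later that an isomorphism $f_0:\langle\bar a\rangle\to\langle\bar b\rangle$ exists ``because the $\Delta$-types coincide and $\Delta$ controls atomic formulas''. Neither claim follows from closure under boolean combinations alone. Indeed the lemma is false as written: take $T$ to be the theory of an infinite set in the pure equality language and $\Delta=\{\top,\bot\}$. Any bijection between countable subsets of two uncountable sets extends to include any prescribed new element, so the back-and-forth hypothesis is satisfied, yet the formula $x=y$ is not equivalent modulo $T$ to $\top$ or to $\bot$.

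The repair is to add the hypothesis that $\Delta$ contains all atomic formulas (and hence, by boolean closure, all quantifier-free formulas). With that in hand, equal $\Delta$-types force equal quantifier-free types, so $\bar a\mapsto\bar b$ really does extend to a $\Delta$-preserving isomorphism $\langle\bar a\rangle\to\langle\bar b\rangle$, and the remainder of your induction together with your compactness extraction of a single equivalent $\Delta$-formula goes through verbatim. This extra hypothesis is satisfied in the paper's application, where $\Delta$ is the set of $\LLdp$-formulas without valued-field-sort quantifiers and in particular contains every quantifier-free formula, so nothing downstream is affected.
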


In multi-sorted logic, each sort come equipped with a distinguished set of variable. Quantifying over a variable from a given sort means that the interpretation of the quantification (``for all", ``there exists") is restricted to the elements of the sort. In order to distinguish between the three sorts, we will use different symbols as variable: \begin{itemize}
    \item $x,y,z,\ldots$ for the field sort, 
    \item $\xi, \zeta$ for the group sort,
    \item $\bar x,\bar y,\ldots$ for the residue sort.
\end{itemize}
An example of $\LLdp$ sentence is the following:
\[\forall x \forall \xi  (v(x) = \xi \wedge (\forall \zeta \ \xi+\zeta = \zeta))\to (\exists \bar y\ \ac(x)\bar y = 1).\]
In any ac-valued field, which is a complicated way to express that if $v(x) = 0$ then $\ac(x)$ has an inverse. Note that $\ac(x) = 0$ if and only if $x = 0$ hence 
\[\TDP\models \forall x(x\neq 0 \leftrightarrow \exists \bar y\ \ac(x)\bar y =1)\]
which is a (silly) example of a quantified formula $\exists \bar y\  \ac(x)\bar y =1$ being equivalent modulo $\TDP$ to a quantifier-free one $x\neq 0$. Example \ref{ex:henselianityQE} below gives a less trivial example of quantifier elimination.

We consider the set $\Delta$ of $\LLdp$-formulas in which the quantifiers $\forall,\exists$ only range over variable $\bar x, \bar y$ from the residue field and $\xi,\zeta$ from the value group. The goal of this section is to use Lemma \ref{lm:QE} with the set $\Delta$ to prove that every formula in $\LLdp$ is equivalent modulo $\TDP$ to a formula from $\Delta$. A formula from $\Delta$ will also be called a $\Delta$-formula.

\begin{example}\label{ex:henselianityQE}
    Let us consider now a less trivial example of quantifier elimination in a model $(K,k,\Gamma,v,\ac)$ of $\TDP$. Let $f\in \Z[X]$ be an irreducible polynomial, which can be seen as a polynomial in $K[X]$ and in $k[X]$ because $(K,v)$ is of equicharacteristic $0$. Since $1$ is in the language of rings, 
    \[n = \underbrace{1+\ldots+1}_{n \text{ times}}\]
    is a term hence so are $f(x),f'(x), f(\bar y), f'(\bar y)$. The following $\LLdp$-sentence holds in $(K,v)$
    \[\forall y \left(\underbrace{[\exists x\ f(x) = 0\wedge v(x-y)>0\wedge v(y) = 0]}_{\phi(y)} \leftrightarrow \underbrace{[\exists \bar y \ f(\bar y) = 0\wedge f'(\bar y)\neq 0\wedge v(y) = 0\wedge \ac(y) = \bar y]}_{\psi(y)} \right)\]
    So the formula $\phi(y)$ is equivalent modulo $\TDP$ to the $\Delta$-formula $\psi(y)$ which has no quantifier in the valued field sort.
    We check that the above sentence indeed hold in every model of $\TDP$. As $(K,v)$ is Henselian, we have in particular that every simple root of $f$ in $k$ can be lifted to a root of $f$ in $K$. This gives the right to left direction, the assumption of $v(y) = 0$ is there to ensure that $\res = \ac$. The left to right direction is just applying the ring homomorphism $\res$ to the equation $f(x) = 0$, knowing that $\Z\seq \cO^\times$ and that $f(x)$ is separable.  
\end{example}

In order to apply Lemma \ref{lm:QE}, we consider two $\aleph_1$-saturated models $(K,k_K,\Gamma_K,v,\ac)$ and $(L,k_L,\Gamma_L)$ of $\TDP$. Note that in every model of $\TDP$ the substructure generated by the constants are isomorphic: it is $(\Z,\Z,\set{0,\infty})$ with trivial valuation ($v(a) = 0$ if $a\neq 0$ and $v(0) = \infty$) and $\ac = \Id$. 

We consider two countable substructures $(A,k_A,\Gamma_A)$ and $(B,k_B,\Gamma_B)$ of $(K,k_K,\Gamma_K,v,\ac)$ and $(L,k_L,\Gamma_L)$ respectively and assume that there exists an isomorphism $f: (A,k_A,\Gamma_A)\to (B,k_B,\Gamma_B)$ which preserves formulas in $\Delta$. As an $\LLdp$-isomorphism, $f$ really consists of three maps $f = (f_{\upharpoonright A}, f_{\upharpoonright k_A}, f_{\upharpoonright \Gamma_A})$ where:
\begin{itemize}
    \item $f_{\upharpoonright A}$ is a ring isomorphism between $A$ and $B$;
    \item $f_{\upharpoonright k_A}$ is a ring isomorphism between $k_A$ and $k_B$;
    \item $f_{\upharpoonright \Gamma_A}$ is an ordered group isomorphism between $\Gamma_A$ and $\Gamma_B$, mapping $\infty$ to $\infty$.
    \item $f$ commutes with both $v$ and $\ac$:
    \[f_{\upharpoonright \Gamma_A}(v(a)) = v(f_{\upharpoonright A}(a)) \text{ and } f_{\upharpoonright k_A} (\ac(a)) = \ac(f_{\upharpoonright A}(a)).\]
\end{itemize}

We want prove that for any $a\in K\setminus A$ we may extend $f$ to an $\LLdp$-isomorphism with domain a subset of $(K,k_K,\Gamma_K)$ containing $a$ and which preserves formulas from $\Delta$. We will prove a little more: 
\begin{center}
    for any countable elementary substructure $(E,k_E,\Gamma_E)$ of $(K,k_K,\Gamma_K)$ containing $(A,k_A,\Gamma_A)$, we can extend $f$ to an $\LLdp$-isomorphism with domain $(E,k_E,\Gamma_E)$ which preserves $\Delta$-formulas.
\end{center}
This will give the result since for any $a\in K$ there exists a countable elementary substructure of $(K,k_K,\Gamma_K)$ containing $A$ and $a$, by the downward Lowehein-Skolem theorem (we assume that all languages are countable).

Let $(E,k_E,\Gamma_E)$ be a countable elementary substructure of $(K,k_K,\Gamma_K)$ extending $(A,k_A,\Gamma_A)$. We will extend $f$ to $(E,k_E,\Gamma_E)$ in the following steps:
\begin{itemize}
    \item \textbf{(Step 0)} $k_A$ is an $\LLres$-structure of $k_K$, hence is a ring (or rather an integral domain), we extend $f_{\upharpoonright k_A}$ to the fraction field $\Frac(k_A)$ of $k_A$ and assume that $k_A$ and $k_B$ are fields. We then extend similarly $f_{\upharpoonright A}$ to the fraction field of $A$ to assume that $A, B$ are fields.
    \item \textbf{(Step 1)} We extend $f_{\upharpoonright k_A}$ to $k_E$ to assume that $k_A = k_E$ and similarly we extend $f_{\upharpoonright \Gamma_A}$ to $\Gamma_E$ to assume that $\Gamma_A = \Gamma_E$. This uses that the maps $f_{\upharpoonright k_A}$ and $f_{\upharpoonright \Gamma_A}$ are elementary and $\aleph_1$-saturation of $(L,k_L,\Gamma_L)$.
    \item \textbf{(Step 2)} We extend $f_{\upharpoonright A}$ to a field $C$ with $A\seq C\seq E$ such that $\res: \cO_C\to k_E$ is onto (hence in particular $\ac$ is also onto). To do so, we exhibit a preimage under $\res$ of an element of $k_E$ which is not in the residue $\res(\cO_A)$ of $A$ and extend $f$ to this preimage. There are two subcases: \ref{subsub:stepunramifiedalgebraic} where the element is algebraic over $\res(\cO_A)$ (in which case we use Henselianity of $E$ and Corollary \ref{cor:3.7bestextension} (1)) and \ref{subsub:stepunramifiedtrans} where the element is transcendental over $\res(\cO_A)$ (in which case we use a similar but simpler argument).
    \item \textbf{(Step 3)} We extend $f_{\upharpoonright A}$ to ensure that $v(A^\times) = \Gamma_E$. We exhibit preimage under $v$ of an element in $\Gamma_A\setminus v(A^\times)$ and extend $f$ on it. There are two subcases: \ref{subsub:notorsionmodulo} where the element is of torsion modulo $v(A^\times)$ (in which case we use Henselianity and Corollary \ref{cor:3.7bestextension} (2)) and \ref{subsub:torsionmodulo} where it is not of torsion (in which case we use a similar but simpler argument again).
    \item \textbf{(Step 4)} We extend $f_{\upharpoonright A}$ to $E$. By the previous steps, any element $a\in E\setminus A$ defines an immediate valued field extension $A(a)$ of $A$. There are two subcases: if $a$ is algebraic over $A$, then $a$ belongs to the Henselization $A^h$ of $A$ and the fact that $E$ is henselian and the universal property of Henselization allows to extend $f$ to $A^h$. The other case is when $a$ is transcendental over $A$ in which case Kaplanski theory of pseudo-convergence (subsection \ref{subsub:kaplanski}) yields a unique way of extending $f$ to $A(a)$. 
\end{itemize}

\begin{remark}(Preserving $\Delta$-formulas)\label{rk:preservingdeltaformula}
    We observe that a $\Delta$-formula $\Phi(x,\bar y, \xi)$ is equivalent to a formulas of the form
    \[(Q\bar y') (Q \xi') \psi(x,\bar y, \bar y',\xi, \xi ')\]
    where $x$ is a tuple of $\LLvf$-variables, $\bar y, \bar y'$ are tuples of $\LLres$-variables, $\xi, \xi '$ are tuples of $\LLgp$-variables, $(Q\bar y') (Q \xi')$ are quantifications over those variables and $\psi$ is quantifier-free. As symbols in $\LLdp$ only apply to variables in the appropriate sort, $\psi$ is equivalent to a disjunction of formulas of the form 
    \[\phi_{\mathrm{vf}}(x)\wedge \psi_{\res}(\ac(t_1(x)),\bar y,\bar y')\wedge \psi_{\mathrm{gp}}(v(t_2(x)),\xi, \xi')\]
    where $\phi_{\mathrm{vf}}$ is from $\LLvf$, $\psi_{\res}$ is from $\LLres$, $\psi_{\mathrm{gp}}$ is from $\LLgp$ and $t_1(x), t_2(x)$ are $\LLvf$-terms. This is because the only terms in $\LLgp$ that can be equal to a variable from $\LLgp$ are $\LLgp$-terms and $v(t_1(x))$ for some $\LLvf$-term $t_1$ and similarly for the residue sort. In turn as $(Q\bar y')$ only use free variable from $\psi_{\res}$, we may put $(Q\bar y')$ in front of $\psi_{\res}$ and similarly for $(Q\xi')$ and $\psi_{\mathrm{gp}}$ so that $\psi$ is equivalent to a disjunction of formulas of the form 
    \[\phi_{\mathrm{vf}}(x) \wedge \phi_{\res}(\ac(t_1(x)),\bar y)\wedge \phi_{\mathrm{gp}}(v(t_2(x)),\xi)\]
    for a quantifier-free $\LLvf$-formula $\phi_{\mathrm{vf}}$, an $\LLres$-formula $\phi_{\res}$ and an $\LLgp$-formula $\phi_{\mathrm{gp}}$. In particular,  $f_{\upharpoonright k_A}$ preserves all $\LLres$-formulas, so $f_{\upharpoonright k_A}$ is an elementary (partial) map between $k_A\seq (K,k_K,\Gamma_K)$ and $k_B\seq (L,k_L,\Gamma_L)$, in the following sense:
    \[(K,k_K,\Gamma_K)\models \phi_{\res} (\bar a_1,\ldots,\bar a_n)\iff (L,k_L,\Gamma_L)\models \phi_{\res}(f_{\upharpoonright k_A} (\bar a_1),\ldots,f_{\upharpoonright k_A}(\bar a_n)).\]
    Similarly $f_{\upharpoonright\Gamma_A}$ is an elementary (partial) map $\Gamma_A\to \Gamma_B$.
\end{remark}

Note that $(K,k_K,\Gamma_K)$ and $(L,k_L,\Gamma_L)$ satisfy the same $\Delta$-sentences since those involving the valued field are quantifier-free, hence only talk about the characteristic.

\subsection{Step 0} We extend $f$ to $\Frac(k_A)$ and $\Frac(A)$. 

First, $\Gamma_A$ and $\Gamma_B$ are subgroups of $\Gamma_K$ and $\Gamma_L$ respectively because $\LLgp$ contains $+,-$. The substructure $k_A$ of $k_K$ is a priori an integral domain (a substructure need only be closed under functions of the language, that would be different if there were a function symbol $^{-1}$ for the inverse) and the fraction field $\Frac(k_A)$ is a subset of $k_K$. We extend $f_{\upharpoonright k_A}$ to $\Frac(k_A)$ by setting $f(\frac{\bar a}{\bar b}) = \frac{f(\bar a)}{f(\bar b)}\in K_L$ for $\bar a,\bar b\in k_A$. This extension is unique and $f_{\upharpoonright k_A}$ is still elementary: it is an easy exercise to prove that each $\LLres$-formula involving fractions $ (\frac{\bar a_i}{\bar b_i})$ is equivalent to an $\LLres$-formula involving $a_i,b_i$.  In light of Remark \ref{rk:preservingdeltaformula} the extension of $f$ thus obtained preserves $\Delta$-formulas. $f$ is still an $\LLdp$-isomorphism since there is no commutativity conditions to check. Similarly, the map $f_{\upharpoonright A}$ extends (uniquely) to a field isomorphism between $\Frac(A)$ and $\Frac(B)$ which commutes with $v$: if $\frac{a}{b}\in \Frac(A)$ then $v(\frac{a}{b}) = v(a)-v(b)\in \Gamma_A$ and $f_{\upharpoonright\Gamma_A}(v(a)-v(b)) = v(f_{\upharpoonright A}(a))-v(f_{\upharpoonright B}(b)) = v(\frac{f_{\upharpoonright A}(a)}{f_{\upharpoonright A}(b)}) = v(f(\frac{a}{b}))$. For $a,b\neq 0$ we have $\ac(\frac{a}{b}) = \frac{\ac(a)}{\ac(b)}$ hence because we first extended $f$ to $\Frac(k_A)$ we similarly conclude that $f$ commutes with $\ac$.

\subsection{Step 1} We extend $f$ to $k_E$ and $\Gamma_E$.

Let $(\bar a_i)_{i<\omega}$ be an enumeration of $k_E\setminus k_A$, this exists since $(E,k_E,\Gamma_E)$ is a countable structure. Consider the set $\Sigma(x)$ of $\LLres$-formulas $\phi(\bar x)$ with parameters in $k_A$ and such $(K,k_K,\Gamma_K)\models \phi(\bar a_0)$. Let $\Sigma_0(x)$ be a finite subset of $\Sigma$ and write $\phi(\bar x, \bar c_1,\ldots,\bar c_n)$ for the conjunction of the formulas in $\Sigma_0$, with $\bar c_i\in k_A$. As $(K,k_K,\Gamma_K)\models \psi(\bar a_0,\bar c_1,\ldots,\bar c_n)$ we have $(K,k_K,\Gamma_K)\models \exists x\ \psi(x,\bar c_1,\ldots,\bar c_n)$. Note that $\exists\bar x \phi(\bar x, \bar c_1,\ldots,\bar c_n)$ is a $\Delta$-formula, hence $(L,k_L,\Gamma_L)\models \exists x\ \psi(x,f(c_1),\ldots,f(c_n))$, since $f_{\upharpoonright k_A}$ is elementary. As $\LLdp$ is countable and $(A,k_A,\Gamma_A)$ is countable, the set $\Sigma^f(\bar x) = \set{\phi^f(\bar x)\mid \phi\in \Sigma}$ is also countable, for $\phi^f(\bar x)$ the $\LLres$-formula with parameters in $(B,k_B,\Gamma_B)$ obtained by applying $f$ to every parameters from $k_A$ in $\phi$. By above, $\Sigma^f$ is finitely satisfiable in $(L,k_L,\Gamma_L)$, it follows from $\aleph_1$-saturation that there exists $\bar b_0\in L$ satisfying $\Sigma^f(\bar x)$. It follows from Remark \ref{rk:preservingdeltaformula} and the fact that $\ac(A)\seq k_A$ that the extension of $f$ to $A\cup\set{\bar a_0}\to B\cup\set{\bar b_0}$ by $f(\bar a_0) = \bar b_0$ preserves all $\Delta$-formula. For any $i<\omega$, the set $A\bar a_0,\ldots,\bar a_i$ is still countable hence by induction we may iterate the above argument to extend $f_{k_A}$ to a partial map $k_E\to f(k_E)$ preserving all $\Delta$-formulas. As $k_E$ is a field and $A$ did not change, $(A,k_E,\Gamma_A)$ is still a substructure of $(E,k_E,\Gamma_E)$ and $f$ is an $\LLdp$-isomorphism $(A,k_E,\Gamma_A)\to (B,f(k_E),\Gamma_B)$ which preserves $\Delta$-formulas. 

We follow the exact same strategy for extending $f$ to $\Gamma_E$ by taking an enumeration $(\gamma_i)_{i<\omega}$ of $\Gamma_E\setminus \Gamma_A$, using this time that $f_{\upharpoonright\Gamma_E}$ is $\LLgp$-elementary and that $v(A^\times)\seq \Gamma_A$. In a sense, Remark \ref{rk:preservingdeltaformula} gives a ``separation of sorts", to be understood as: $\tp^{\LLdp}(\gamma_i/(A,k_A,\Gamma_A)\gamma_0\ldots \gamma_{i-1})$ is really given by $\tp^{\LLgp}(\gamma_i/\Gamma_A\gamma_0\ldots \gamma_{i-1})$ . This is really because there is no map going from the group sort (or the residue field sort) to another sort.

\begin{remark}
    In the rest of the proof if we extend $f$ to an $\LLdp$-isomorphism between a substructure $(C,k_E,\Gamma_E)\supseteq (A,k_E,\Gamma_E)$ of $(E,k_E,\Gamma_E)$ and its image, it will automatically preserve all $\Delta$-formulas. This follows from Remark \ref{rk:preservingdeltaformula} since $v(C^\times )\seq \Gamma_E$ and $\ac(C)\seq k_E$ and $f_{\upharpoonright k_E},f_{\upharpoonright \Gamma_E}$ are elementary.
\end{remark}

\textit{By Step 1 and the previous Remark, we are given an $\LLdp$-isomorphism $f: (A,k_E,\Gamma_E)\to (B,f(k_E),f(\Gamma_E))$ which preserves $\Delta$-formulas (in particular $f_{\upharpoonright k_E}$ is $\LLres$-elementary and $f_{\upharpoonright\Gamma_E}$ is $\LLgp$-elementary) and we need to extend it to $(E,k_E,\Gamma_E)$. Note that at this point we may have $\res(\cO_A)\sneq k_E$ and $v(A^\times)\sneq \Gamma_E$. In Step 2 and Step 3 we will ensure that both $\res$ and $v$ are onto.}

\subsection{Step 2} We extend $f$ to a subfield $C\seq E$ such that $\res(\cO_C) = k_E$. 

Denote by $\tilde{k}_A$, $\tilde{k}_B$ the residue fields of $(A,v)$ and $(B,v)$ respectively, i.e. $\res(\cO_A) = \tilde k _A\seq k_A$ and $\res(\cO_B) = \tilde k _B \seq k_B$. As $f$ defines an isomorphism of valued fields between $(A,v)$ and $(B,v)$, it induces an isomorphism between $\tilde k _A$ and $\tilde k _B$. Let $\bar a\in k_E\setminus \tilde k _A$. There are two subcases: $\bar a$ is algebraic over $\tilde k _A$ or $\bar a$ is transcendental over $\tilde k _A$.

\subsubsection{$\bar a$ is algebraic over $\tilde k _A$}\label{subsub:stepunramifiedalgebraic} Let $\bar P(X)$ be its minimal monic polynomial over $\tilde k _A$ and let $P(X)\in \cO_A[X]$ be a monic polynomial obtained by lifting the coefficients of $\bar P(X)$ (and taking $1$ as a lift for the leading coefficient). $P$ has the same degree as $\bar P$ and because $\res: \cO_A\to \tilde k _A$ is a ring homomorphism which extends to $\cO_A[X]\to \tilde k_A[X]$ and $\res(P) = \bar P$ we obtain that $P(X)$ is irreducible over $\cO_A$ and over $A$. As $\tilde k _A$ is of characteristic $0$ and $\bar P$ is irreducible, $\bar P'(a)\neq 0$. As $E$ is Henselian, simple zeros lift, hence there exists $a\in \cO_E$ such that $P(a) = 0$ and $\res(a) = \bar a$. On the other side, $f(P)(X)$ is irreducible over $B$ and of the same degree as $P$ as $f$ is a field isomorphism between $A$ and $B$. Similarly $f(\bar P)(X)$ is irreducible and separable over $\tilde k _B$ with $f(\bar a)$ as a single root, hence by Hensel's Lemma there exists $b\in L$ with $f(P)(b) = 0$ and $\res(b) = f(\bar a)$. We extend $f_{\upharpoonright A}$ to a field isomorphism\footnote{This is very standard: first $A[X]$ and $B[X]$ are isomorphic and the ideal $(P)$ in $A[X]$ (respectively $(f(P))$ in $B[X]$) is the kernel of the evaluation map $A[X]\to A(a)$ (resp. $B[X]\to B(a)$) which yields $A(a)\cong A[X]/(P)\cong B[X]/(f(P))\cong B(b)$.} between $A(a)$ and $B(b)$ by setting $f(a) = b$. We need to check that this isomorphism preserves the valuation and commutes with $\ac$.

As a $\tilde k _A$-vector space, $\tilde k _A (\bar a)$ admits $1, \bar a,\ldots, \bar a^{n-1}$ as a basis. By Corollary \ref{cor:3.7bestextension} (1), for all $u_0,\ldots,u_{n-1}\in A$ we have 
\[v(\sum_{i=0}^{n-1} u_i a^i) = \min_i\set{v(u_i)}\in \Gamma_A\]
Similarly, $1,\res(b), \ldots  \res(b) ^{n-1}$ is a basis of $\tilde k _B (f(\bar a))$ hence $v(\sum_{i=0}^{n-1} f(u_i) b^i) = \min_i\set{v(f(u_i))}\in \Gamma_B$ for all $f(u_i)\in B$. Note that this gives that $A(a)$ and $B(b)$ are unramified extensions of $A$ and $B$ respectively, for the valuations induced on $A(a)$ and $B(b)$ by $(K,v)$ and $(L,v)$. As $f_{\upharpoonright \Gamma_A}$ preserves the order and $v(u_i)\in \Gamma_A$, we have $f(\min_i\set{v(u_i)}) = \min_i\set{f(v(u_i))}$. We also have $f(v(u_i)) = v(f(u_i))$ for all $i$ since $u_i\in A$. We conclude $f(v(\sum_{i=0}^{n-1} u_i a^i)) = v(f(\sum_i u_i a_i))$. 

It remains to check that $f$ commutes with $\ac$. As $A(a)$ is an unramified extension of $A$, by Remark \ref{rk:productofelementsunramified}, every element in $A(a)$ can be written as the product $du$ where $u\in A$ and $d\in A(a)$ with $v(d) = 0$. Then $f(\ac(du)) = f(\ac(d)\ac(u)) = f(\ac(d))f(\ac(u))$. As $f\upharpoonright (A,k_A,\Gamma_A)$ is an $\LLdp$-isomorphism we have $f(\ac(u)) = \ac(f(u))$. As $v(d) = 0$, $\ac(d) = \res(d)$ so $f(\res(d)) = \res(f(d)) = \ac(f(d))$ because $f$ is a valued field isomorphism $A(a)\to A(b)$ (and $v(f(d)) = 0$). We conclude $f(\ac(du)) = \ac(f(du))$.

\subsubsection{$\bar a$ is transcendental over $\tilde k _A$}\label{subsub:stepunramifiedtrans} Then so is $f(\bar a)$ over $\tilde k _B$ and let $a\in \cO_E$, $b\in \cO_L$ be such that $\res(a) = \bar a$ and $\res(b) = \bar b$. As $\res$ is a ring homomorphism we have that $a$ and $b$ are transcendental over $A$ and $B$ respectively. We can extend $f_{\upharpoonright A}$ to a field isomorphism $A(a)\to B(b)$ with $a\mapsto b$. Reasoning as in \ref{subsub:stepunramifiedalgebraic} using Corollary \ref{cor:3.7bestextension} (1) we get that $A(a)/A$ and $B(b)/B$ are unramified valued field extensions (for the induced valuation from $K$ and $L$ respectively) and $f$ commutes with the valuation. Again, as in \ref{subsub:stepunramifiedalgebraic} we get that $f$ also commutes with $\ac$ by writing every element of $A(a)$ as the product of an element of $\cO_{A(a)}^\times$ and an element of $A$ (Remark \ref{rk:productofelementsunramified}).

\textit{By considering a countable enumeration of $k_E\setminus \tilde k _A$ and reasoning as in Step 1, we conclude Step 2.}

\subsection{Step 3} We extend $f_{\upharpoonright A}$ to a subfield $C\seq E$ such that $v(C^\times ) = \Gamma_E$. Let $\tilde \Gamma_A = v(A^\times)$ be the value group of $A$ and let $\gamma\in \Gamma_A\setminus \tilde \Gamma_A$ with $\gamma>0$. Let $\tilde \Gamma_B =v(B^\times) = f(\tilde \Gamma_A)$. There are two cases: there exists $n$ such that $n\gamma\in \tilde\Gamma_A$ ($\gamma$ is torsion modulo $\tilde \Gamma_A$) or $n\gamma\notin \tilde\Gamma_A$ for all $n$ ($\gamma$ is not torsion modulo $\tilde\Gamma_A$). Note that at this point $(A,v)$ and $(E,v)$ have same residue field by Step 2.

\subsubsection{$\gamma$ is torsion modulo $\tilde \Gamma_A$}\label{subsub:torsionmodulo} Let $n$ be minimal such that $n\gamma\in \tilde \Gamma _A$. Observe that $0, \gamma,\ldots, (n-1)\gamma$ are in different cosets modulo $\tilde \Gamma _A$. 
As $(A,v)\seq (E,v)$ have same residue field and $(E,v)$ is Henselian, by Corollary \ref{cor:henseliannthroots}
there exists $a\in E$ such that $a^n\in A$ and $v(a) = \gamma$. By minimality of $n$ we also have that $X^n-a^n$ is the minimal polynomial of $a$ over $A$: otherwise $\sum_{i=0}^{n-1} u_i a^i = 0$ for some $u_i\in A$ not all zero, then by Corollary \ref{cor:3.7bestextension} (2), $v(\sum_{i=0}^n u_i a^i) = \min_i\set{v(u_i)+i\gamma}$ so this implies that $v(u_i)+i\gamma = v(u_j)+j\gamma $ for some $0\leq i<j<n$, contradicting minimality of $n$.

As $f_{\upharpoonright\Gamma_A}:\Gamma_A\to \Gamma_B$ is an isomorphism and $f_{\upharpoonright \Gamma_A}(\tilde \Gamma_A)=\tilde \Gamma_B$, $n$ is also minimal such that $nf(\gamma)\in \tilde\Gamma _B $ and $0,f(\gamma),\ldots,(n-1)f(\gamma)$ are in different cosets modulo $\tilde \Gamma _B$. By Fact \ref{fact:henselization}, the Henselization $B^h$ of $B$ is a subfield of $(L,v)$ with same residue field. By Corollary \ref{cor:henseliannthroots}, this time applied to the valued field extension $(B^h,w)$ of $(B,w)$, there exists $c\in B^h\seq L$ such that $c^n\in B$ and $v(c) = f(\gamma)$. Again by minimality of $n$, $X^n-c^n$ is the minimal polynomial of $c$ over $B$. At this point, if it was the case that $c^n = f(a^n)$, it would follow that $f$ extends to a field isomorphism $A(a)\to B(c)$ via $a\mapsto c$ and by Corollary \ref{cor:3.7bestextension} (2) $v(\sum_{i=0}^n u_i a^i) = \min_i\set{v(u_i)+iv(a)}$ and $v(\sum_{i=0}^n f(u_i) b^i) = \min_i\set{f(v(u_i))+iv(b)}$ for all $u_i\in A$, which only depends on $\tilde\Gamma_A,\gamma$ (respectively $\tilde \Gamma_B, f(\gamma)$) so, similarly to the previous case, $f$ is a valued field isomorphism. In order to commute with $\ac$ we need to modify $c$ to some $b$ such that $\ac(b) = f(\ac(a))$.

We change $c$ to $b$ such that $f(a^n) = b^n$ and $\ac(b) = f(\ac(a))$. As $c^n\in B$ the element $\ac(c)\in k_L$ is algebraic over $f_{\upharpoonright k_A}(k_A) = k_B$. As $f_{\upharpoonright k_A}$ is $\LLres$-elementary, we have $k_B\prec k_L$ which implies\footnote{This is pretty straightforward: let $K\prec L$ as rings and $a\in L$ algebraic over $K$. Let $P(X)\in K[X]\setminus \set{0}$ be the minimal monic polynomial of $a$ over $K$ and $n$ the number of roots of $P$ in $L$. Then $L\models \exists ^{\geq n} x\ P(x) = 0$ hence $K\models \exists ^{\geq n} x\ P(x) = 0$ so that every root of $P$ in $L$ is also in $K$, in particular $a\in K$. Here $\exists ^{\geq n} x P(x) = 0$ is a shortcut for $\exists x_1,\ldots,x_n \bigwedge_{1\leq i\neq j\leq n} x_i\neq x_j\wedge \bigwedge_{1\leq i\leq n} P(x_i) = 0$.} $k_B^\alg\cap k_L = k_B$, so $\ac(c)\in k_B$. Consider $f(\ac(a))\ac(c^{-1})\in k_B\setminus\set{0}$. By Step 2, $\res(\cO_B) = f(k_A) = k_B$ hence there exists $d\in \cO_B^\times $ such that $\res(d) = f(\ac(a))\ac(c^{-1})$. Let $c' = cd$ then $\ac(c') = f(\ac(a))$ and $v(c') = v(c) = f(\gamma)$. It follows that $v(f(a^n)/c'^n) = 0$ hence 
\[\res(f(a^n)/c'^n) = \ac(f(a^n))\ac(c^n)^{-1} = 1.\]
Then, $f(a^n) = c^n(1+u)$ for some $u\in \MM_B$ and by Claim \ref{claim_crux} (applied again in the immediate extension $B^h\seq L$) we have that there exists $d\in L$ with $e^n = 1+u$ and $\res(e) = 1$. We set $b = c'e$, so that $b^n = f(a^n)$ and $\ac(b) = \ac(c') = f(\ac(a))$.


Note that Corollary \ref{cor:3.7bestextension} (2) implies that $v(A(a)^\times) = \vect{\Gamma_A,\gamma}$ hence by Remark \ref{rk:productofthreeelementsramifiedcase} every element of $A(a)$ can be written as product of elements $uca^n$ with $u\in A, c\in A(a)$ with $v(c) = 0$ and similarly on the side of $B(b)$. Then using $\ac(b) = f(\ac(a))$ and the fact that $\ac$ and $\res$ coincide on elements of valuation $0$ we conclude that $f:A(a)\to B(b)$ commutes with $\ac$.

\subsubsection{$\gamma$ is torsion-free modulo $\tilde \Gamma _A$}\label{subsub:notorsionmodulo} Let $a\in E$ be such that $v(a) = \gamma$. As $(n\gamma)_{n\in \N}$ are in different cosets modulo $\tilde \Gamma _A$, Corollary \ref{cor:3.7bestextension} (2) yields \[v(\sum_i u_i a^i) = \min_i \set{v(u_i)+iv(a)}\quad (\star)\] for all $(u_i)\in A$. $(\star)$ has several consequences:
\begin{itemize}
    \item $a$ is transcendental over $A$: if $\sum_i u_i a^i = 0$, this would imply that $v(u_i)+iv(a) = v(u_j)+jv(a)$ for some $i\neq j$ contradicting the hypothesis on $\gamma$. 
    \item If $b\in L$ is such that $v(b) = f(\gamma)$ then as in the previous point, $b$ is transcendental over $B$ and by $(\star)$ the field isomorphism $A(a)\to B(b)$ mapping $a\mapsto b$ commutes with the valuation
    \item $v(A(a)^\times)  = \tilde \Gamma _A\oplus \vect{\gamma}$ and $v(B(b)^\times ) = \tilde \Gamma _B\oplus \vect{f(\gamma)}$
\end{itemize}
We prove that we may choose $a,b$ as above with $\ac(a) = 1$. As $0<\alpha<\infty$ we have that $\res(a) = 0$ and as $a\neq 0$ we have $\ac(a)\neq 0$. In particular there exists $u\in \cO_A^\times $ such that $\res(u) = \ac(a)$ hence for $a' = a u^{-1}$ we have: $a'$ transcendental over $A$, $v(a') = v(a) - v(u) = \gamma$ and $\ac(a') = 1$. Similarly we may move $b$ to $b'$ such that $\ac(b') = 1$. By above the extension $f: A(a')\to B(b')$ mapping $a'\mapsto b'$ is a valued fields isomorphism and preserves $\ac$ using Remark \ref{rk:productofthreeelementsramifiedcase}.

\textit{By considering a countable enumeration of $\Gamma_E\setminus \tilde \Gamma _A$ and reasoning as in Step 1, we conclude Step 3.}

\subsection{Step 4} We extend $f_{\upharpoonright A}$ to $E$. For any $a\in E\setminus A$, we have $v(a)\in \Gamma_E = v(A^\times) = \Gamma_A$ by Step 3 and if $v(a)\geq 0$ we have $\res(a) \in k_E = \res(\cO^\times_A) = k_A$ by Step 2 so $A(a)$ is an immediate extension of $A$. If $v(a)<0$ consider $a^{-1}$ (since we will extend $f$ to the field $A(a)$). There are two subcases.

\subsubsection{$a$ is algebraic over $A$} Then by Fact \ref{fact:henselization} (4) we have that $a$ is in the Henselization $A^h$ of $A$. We extend directly $f$ to $A^h$. Note first that as $E$ is Henselian, $A^h\seq E$ and $A^h = A^\alg\cap E$ by Fact \ref{fact:henselization} and the restriction of the valuation of $E$ to $A^h$ is the (unique) extension of $v_A$ to $A^h$. Similarly, $B^h$ is an immediate extension of $B$ and $f$ extends (uniquely) to a valued field isomorphism $A^h\to B^h$ this follows from the universal property of the Henselization (Fact \ref{fact:henselization} (3)). Note that the Henselization is an immediate extension hence $\Gamma_A (=\Gamma_E)$ $k_A (=k_E)$, $\Gamma_B$, $k_B$ are left unchanged. As $A^h$ is an unramified extension of $A$, $\ac$ extends uniquely to $A(a)$ (by Remark \ref{rk:productofelementsunramified}) and $f$ commutes with $\ac$.

\subsubsection{$a$ is transcendental over $A$} In this case, by Proposition \ref{prop:immediateextensionpseudocauchylimit}, as $A$ is Henselian of residue characteristic $0$, there exists a pc-sequence of transcendental type $(a_i)_{i<\lambda}$ of elements of $A$ which has no limit in $A$ and such that $(a_i)\rsa a$. Then $(A(a),v)$ (in $E$) is the unique extension of $(A,v)$ as in Theorem \ref{thm:uniquevaluationpctranscendental}. As $(a_i)_{i<\lambda}$ is a pc-sequence of transcendental type, so is the sequence $(b_i)_{i<\lambda}$ defined by $b_i = f_{\upharpoonright A}(a_i)$, since being a pc-sequence of transcendental type is expressible as a set of quantifier-free formulas\footnote{Eventually we have $v(P(a_i))_i$ is constant for all $P\in A[X]\setminus \set{0}$ so this holds for $(P(b_i))_i$ eventually for all $P\in B[X]\setminus \set{0}$.}. By Lemma \ref{lm:everypcsequenceisconvergent} $(b_i)$ has a pseudolimit $b$ in $L$ and $f$ extends to a valued field isomorphism $A(a)\to B(b)$ mapping $a\mapsto b$ by the ``Conversely" part of Theorem \ref{thm:uniquevaluationpctranscendental}. As in the previous case, $A(a)$ is an unramified extension of $A$ so $\ac$ extends uniquely to $A(a)$ (by Remark \ref{rk:productofelementsunramified}) and $f$ commutes with $\ac$.

\textit{By considering a countable enumeration of $E\setminus A$ and reasoning as in Step 1, we conclude Step 4.}

\appendix

\section{Extra results of Ax and Kochen and a conjecture of Lang}

Recall that a ring $R$ is \textit{local} if it has a unique maximal ideal $\cM$. The field $k:= R/\cM$ is called the \textit{residue field}, just as in the valued field case. Note that $R^\times = R\setminus \cM$. A local ring $R$ is \textit{Henselian} if for all polynomials $P(X)\in R[X]$ and any $a\in R$ such that 
\[P(a)\in \cM\quad \text{and}\quad P'(a)\notin \cM\]
there is $b\in R$ such that $P(b) = 0$ and $a-b\in \cM$.

\begin{theorem}[Lifting]\label{thm:lifting}
    Let $R$ be a Henselian local ring of residual characteristic $0$. Then the residue field can be lifted, i.e. there is a field $F$ which is a subring of $R$ such that $\res : F\to k$ is an isomorphism. 
\end{theorem}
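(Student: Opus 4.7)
The plan is to apply Zorn's lemma to the poset of subfields of $R$ and to show that a maximal such subfield maps isomorphically onto $k$ under $\res$.

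First I would observe that since the residue characteristic is $0$, every positive integer $n \geq 1$ satisfies $\res(n) \neq 0$, so $n \in R^\times$; in particular $\mathbb{Q}$ embeds as a subring of $R$. The poset $\mathcal{F}$ of subfields of $R$ ordered by inclusion is therefore nonempty, and unions of chains are again subfields, so Zorn's lemma yields a maximal element $F \in \mathcal{F}$. The restriction $\res|_F : F \to k$ is automatically injective, since its kernel $F \cap \cM$ is a proper ideal of the field $F$. The content of the proof is to establish surjectivity of $\res|_F$, which I would do by contradiction: assume there is some $\bar{a} \in k \setminus \res(F)$, and rule out both the transcendental and algebraic cases.

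In the transcendental case, where $\bar{a}$ is transcendental over the subfield $\res(F) \subseteq k$, I would pick any $a \in R$ lifting $\bar{a}$ and observe that for every nonzero $P \in F[X]$ the reduction $\res(P) \in \res(F)[X]$ is nonzero (by injectivity of $\res|_F$), so $\res(P(a)) = \res(P)(\bar{a}) \neq 0$ and therefore $P(a) \in R^\times$; hence $F(a) \subseteq R$ is a strictly larger subfield, contradicting maximality. In the algebraic case, let $\bar{P} \in \res(F)[X]$ be the minimal monic polynomial of $\bar{a}$ and lift its coefficients via $\res|_F^{-1}$ to obtain a monic $P \in F[X]$ of the same degree, which is therefore irreducible over $F$. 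Any lift $a' \in R$ of $\bar{a}$ satisfies $P(a') \in \cM$, and since $k$ has characteristic $0$ the irreducible $\bar{P}$ is automatically separable, so $\res(P'(a')) = \bar{P}'(\bar{a}) \neq 0$ and thus $P'(a') \notin \cM$. Henselianity of $R$ then produces $b \in R$ with $P(b) = 0$ and $\res(b) = \bar{a}$; since $1 \neq 0$ in $R$, the evaluation morphism $F[X] \to R$ at $b$ has proper kernel containing the maximal ideal $(P)$, so the kernel equals $(P)$, and $F[b] \cong F[X]/(P)$ is a field sitting inside $R$ and strictly containing $F$, once more contradicting maximality.

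The main obstacle will be organizing the algebraic case: the characteristic $0$ hypothesis on $k$ enters in two essential places, namely to ensure $\mathbb{Q} \subseteq R$ so that Zorn has something to start from, and to force the irreducible lift $\bar{P}$ to be separable so that Henselianity actually delivers a root $b$ of $P$ with prescribed residue. Everything else reduces to checking that the candidate extensions $F(a)$ and $F[b]$ genuinely live inside $R$, which follows from the local-ring dichotomy $R = R^\times \sqcup \cM$ together with surjectivity of $\res : R \to k$.
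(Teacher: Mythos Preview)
Your proof is correct and follows essentially the same approach as the paper: apply Zorn's lemma to the poset of subfields of $R$ (nonempty because $\mathbb{Q}\subseteq R$), note that $\res$ is injective on any such subfield, and derive a contradiction to maximality by treating the transcendental and algebraic cases separately, invoking Henselianity and separability in characteristic $0$ for the latter. Your write-up is in fact slightly more explicit than the paper's in justifying why $F[b]\cong F[X]/(P)$ is a field inside $R$.
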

\begin{proof}
    As $R$ is of residual characteristic $0$, $k$ contains $\Q$ as a subfield and the injective homomorphism $\Z\to k$ lift to a homomorphism $\Q\to k$. In particular $R$ contains the ring $\Q$ as a subring. Using Zorn's Lemma, there exists a maximal field $F$ contained in $R$ extending $\Q$. Note that $F$ consist of invertible elements of $R$ hence $\res$ is injective on $F$, so $\res(F)$ is a subfield of $k$. We now show that $F$ is a lift of $k$, in the sense that $\res(F)=k$. Assume not, then there exists $\bar a\in k$ such that $\bar a\notin \res(F)$. There are two cases. First if $\bar a$ is transcendental over $\res(F)$. Then for all $P\in F[X]\setminus \set{0}$ and any lift $a$ of $\bar a$ ($\res(a) = \bar a$) we have 
    \[\res(P(a)) = \res(P)(\bar a)\neq 0.\] 
    This means that $P(a)$ is invertible in $R$, i.e. $P(a)\in R^\times$ so that for any $a\in \res^{-1}(\bar a)$ we get that $F(a)$ is a field contained in $R$. This contradicts maximality of $F$. If $\bar a$ is algebraic over $\res(F)$ hence there exists a monic polynomial $P(X)\in F[X]$ which is a lift of the minimal monic polynomial $\res(P)(X)$ of $\bar a$ over $\res(F)$. Then $P(X)$ is irreducible over $F[X]$. As $k$ is of characteristic $0$ we have $\res(P(a)) = 0$ and $\res(P'(a)) = (\res(P))'(\bar a) \neq 0$ which means $P(a) \in \cM$ and $P'(a)\notin\cM$. As $R$ is Henselian, there exists $b\in R$ such that $P(b) = 0$ and $\res(b) = \res(a) = \bar a$. This means that $F[b] = F(b)$ is an algebraic field extension of $F$ contained in $R$, contradicting maximality of $F$. We conclude that such $\bar a$ cannot exist, i.e. $\res:F\to k$ is onto.
\end{proof}

A consequence of the lifting theorem is the following extra result of Ax and Kochen \cite{AK65I}.
\begin{theorem}[Ax-Kochen]\label{thm:extraresultaxkoch}
    Let $P\in \Z[X_1,\ldots,X_n]\setminus \set{0}$ and $\bar P\in \F_p[X_1,\ldots,X_n]$ its reduction modulo $p$. Then for all but finitely many primes $p$, any zero of $\bar{P}$
    in $\F_p$ can be lifted to a zero of $P$ in $\Z_p$.
\end{theorem}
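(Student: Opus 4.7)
The plan is to argue by contradiction using an ultraproduct construction that converts the mixed-characteristic rings $\Z_p$ into a single Henselian local ring of residual characteristic zero, to which Theorem~\ref{thm:lifting} applies.

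Suppose for contradiction that there is an infinite set $S$ of primes such that for each $p \in S$ some $\bar a_p \in \F_p^n$ satisfies $\bar P(\bar a_p) = 0$ but has no preimage in $\Z_p^n$ under the residue map which zeroes $P$. Fix a non-principal ultrafilter $\cU$ on the primes with $S \in \cU$, and set
\[
R = \prod_\cU \Z_p, \qquad k = \prod_\cU \F_p.
\]
First I would check that $R$ is a Henselian local ring of residue characteristic zero with residue field $k$: the non-units form the maximal ideal $\cM = \prod_\cU p\Z_p$ (since $[(a_p)]$ is a unit in $R$ iff $\{p : a_p \in \Z_p^\times\} \in \cU$); Henselianity of $R$ follows by a \L o\'s' argument from the first-order scheme expressing simple zero lift (each $\Z_p$ being Henselian); and residual characteristic is zero because for every prime $q$ one has $\{p : q \equiv 0 \text{ in } \F_p\} = \{q\} \notin \cU$, so $q \neq 0$ in $k$.

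Now apply Theorem~\ref{thm:lifting} to obtain a subfield $F \subseteq R$ such that $\res : F \to k$ is an isomorphism. Let $\bar a = [(\bar a_p)]_\cU \in k^n$. Writing the integer coefficients of $P$ as ring-language terms, the identities $\bar P(\bar a_p) = 0$ in $\F_p$ for $p \in S$ together with \L o\'s' theorem yield $P(\bar a) = 0$ in $k$ (the integer coefficients are interpreted in characteristic zero and agree componentwise with their reductions modulo $p$). Lift $\bar a$ componentwise through the inverse of $\res : F \to k$ to $a \in F^n \subseteq R^n$. Then $P(a) \in F$ with $\res(P(a)) = P(\bar a) = 0$, so $P(a) \in F \cap \cM = \{0\}$ (every nonzero element of the field $F$ is a unit, hence outside $\cM$). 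Therefore $P(a) = 0$ in $R$ with $\res(a) = \bar a$. A second application of \L o\'s' theorem produces, for $\cU$-almost every $p$ (so in particular for some $p \in S$), an element $a_p \in \Z_p^n$ with $P(a_p) = 0$ and $\res(a_p) = \bar a_p$, contradicting the choice of $\bar a_p$.

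The main subtle point I anticipate is the coefficient bookkeeping: $P$ has integer coefficients that appear as literal integers in $R$ and $k$ (both of characteristic zero) but as reductions modulo $p$ in each factor $\F_p$ and $\Z_p$. This is reconciled by writing the coefficients as ring-language terms, so that every ring homomorphism automatically sends $P$ to its appropriate reduction and the evaluation $P(\bar a) = 0$ in $k$ is equivalent to $\bar P(\bar a_p) = 0$ for $\cU$-many $p$. Conceptually the only essential input beyond this bookkeeping is Theorem~\ref{thm:lifting}: the whole purpose of passing to an ultraproduct is to create residual characteristic zero, which is exactly what allows the residue field to embed back into $R$ and thereby convert a residual zero of $\bar P$ into an honest zero of $P$ in $R$.
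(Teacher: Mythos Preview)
Your proof is correct and follows essentially the same approach as the paper: form the ultraproduct $R=\prod_\cU \Z_p$, observe it is Henselian with residue field $\prod_\cU \F_p$ of characteristic $0$, invoke Theorem~\ref{thm:lifting} to lift a residual zero of $P$ to an actual zero in $R$, and transfer back via \L o\'s. The only cosmetic difference is that the paper packages the conclusion as a single first-order sentence $\phi_P$ (``every residual zero of $P$ lifts'') holding in $R$ and then applies \L o\'s to $\phi_P$ directly, whereas you argue by contradiction with explicit witnesses $\bar a_p$; your framing makes the passage from ``$\cU$-almost all $p$'' to ``cofinitely many $p$'' a bit more transparent.
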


\begin{proof}
    Let $P\in \Z[X_1,\ldots,X_n]$. Let $\cU$ be a non-principal ultrafilter on the set of prime numbers. Reasoning as in the proof of the Ax-Kochen principle, $R := \prod_\cU\Z_p$ is a Henselian local field with residue field $k :=\prod_\cU \F_p$ hence of characteristic $0$. Assume that $P$ has a zero in $k$, i.e. there exists $\bar a\in k^n$ such that $\res(P)(\bar a)= 0$. By Theorem \ref{thm:lifting}, $P$ considered in $R[X]$ also have a solution in $R$, i.e. there exists $b\in R^n$ such that $\res(b_i) = \bar a_i$ and $P(b) = 0$, by lifting $\bar a$. Note that here both $R$ and $k$ have characteristic $0$. Consider now the statement $\phi_P$ defined by
    \[\forall x\  \left( P(x)\in\cM\right)\rightarrow \left(\exists y P(y) = 0\wedge y_i-x_i\in \cM\right) \]
    where $x = (x_1,\ldots,x_n)$ and $y= (y_1\ldots y_n)$. Then $R\models \phi_P$ hence by \L o\'s theorem, we get $\Z_p\models \phi_P$ for all but finitely many primes $p$.
\end{proof}

As a corollary, Ax and Kochen \cite{AK65I} were able to establish a solution to a conjecture of Lang from the 50s, which were already proven by Greenleaf using algebraico-geometric technics around the same time.

\begin{corollary}(Greenleaf, Ax-Kochen)
    Let $P\in \Z[X_1,\ldots,X_n]\setminus \set{0}$ be with constant term equal to zero and assume that $\deg P<n$. Then $P$ has a nontrivial zero in $\Z_p$ for all but finitely many primes $p$.
\end{corollary}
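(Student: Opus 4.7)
The plan is to combine the Chevalley--Warning-type statement recalled in the footnote after Chevalley's theorem with the Ax--Kochen lifting theorem (Theorem \ref{thm:extraresultaxkoch}). The key observation is that a solution in $\Z_p$ can be produced from a solution in $\F_p$, and a solution in $\F_p$ is guaranteed for every prime $p$ by the hypothesis $\deg P < n$ together with the vanishing constant term.

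More precisely, fix a prime $p$ and let $\bar P \in \F_p[X_1,\ldots,X_n]$ denote the reduction of $P$ modulo $p$. Since $\deg(\bar P) \leq \deg(P) < n$, the Chevalley--Warning form of Chevalley's theorem stated in the footnote applies: the number $N(\bar P)$ of zeros of $\bar P$ in $\F_p^n$ is divisible by $p$. As the constant term of $P$ is zero, $\vec 0$ is a zero of $\bar P$, so $N(\bar P) \geq 1$ and hence $N(\bar P) \geq p \geq 2$. Consequently $\bar P$ admits a nontrivial zero $\bar a \in \F_p^n \setminus \{\vec 0\}$.

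By Theorem \ref{thm:extraresultaxkoch}, for all but finitely many primes $p$ every zero of $\bar P$ in $\F_p$ lifts to a zero of $P$ in $\Z_p$; in particular $\bar a$ lifts to some $a = (a_1,\ldots,a_n) \in \Z_p^n$ with $P(a) = 0$ and $\res(a_i) = \bar a_i$ for each $i$. Since $\bar a \neq \vec 0$, there is some index $i$ with $\bar a_i \neq 0$, so $a_i \notin p\Z_p$ and in particular $a_i \neq 0$. Thus $a$ is a nontrivial zero of $P$ in $\Z_p$, and this holds for all but finitely many $p$.

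There is essentially no obstacle here: once Theorem \ref{thm:extraresultaxkoch} is available, the only ingredient to supply is the existence of a nontrivial mod-$p$ solution, which is handed to us by Chevalley--Warning under the hypotheses $\deg P < n$ and $P(\vec 0) = 0$. The only mild subtlety is to notice that the lift of a nonzero residue tuple is automatically nonzero in $\Z_p^n$, which follows immediately from $\res(a_i) = \bar a_i \neq 0 \Rightarrow a_i \neq 0$.
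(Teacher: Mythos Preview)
Your proof is correct and follows essentially the same approach as the paper: apply Chevalley--Warning to obtain a nontrivial zero of $\bar P$ in $\F_p^n$, then invoke Theorem~\ref{thm:extraresultaxkoch} to lift it to a nontrivial zero in $\Z_p^n$. You are slightly more explicit than the paper in verifying that the lift of a nonzero residue tuple is itself nonzero, but the argument is otherwise identical.
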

\begin{proof}
    The polynomial $\res(P)(X)$ has a trivial solution in $\F_p$ since there is no constant terms. By the Chevalley-Warning theorem, the cardinality of the set
    \[\set{a\in \F_p^n\mid P(a) = 0}\]
    is divisible by $p$ hence there is a nontrivial zero of $\res(P)$ in $\F_p$. By Theorem \ref{thm:extraresultaxkoch}, this nontrivial zero lift to a zero in $\Z_p$ (which is nontrivial) for all but finitely many primes.
\end{proof}

The lifting theorem has another consequence. Let $n\in \N^{>1}$ and $p$ a prime number. We list similarities and differences between the two following rings.~\\

\begin{minipage}{0.45\textwidth}
\[\Z/p^n\Z = \set{0,1,\ldots, p^{n}-1}\]
~\\
\begin{itemize}
    \item Ring of cardinality $p^n$.
    \item Local ring with residue field $\F_p$.
    \item The unique maximal ideal $\vect{p}$ is principal.
    \item The characteristic is $p^n$.
    \item No lift of the residue field.
\end{itemize}
\end{minipage}%
\hfill
 \vrule height 0.1\textheight depth 0.1\textheight
\hfill
\begin{minipage}{0.45\textwidth}
\[\F_p[X]/\vect{X^n} \]
~\\
\begin{itemize}
    \item Ring of cardinality $p^n$.
    \item Local ring with residue field $\F_p$.
    \item The unique maximal ideal $\vect{X}$ is principal.
    \item The characteristic is $p$.
    \item Lift of the residue field $\F_p$.
\end{itemize}
\end{minipage}
~\\

Thus, the two rings $\Z/p^n\Z$ and $\F_p[X]/\vect{X^n}$ look very much alike. As for $\Z_p$ and $\F_p[[X]]$ in the AKE theorem, we show that $\Z/p^n\Z$ and $\F_p[X]/\vect{X^n}$ asymptotically share the same first-order theory. We will study local rings in the language $\LLr^t = \LLr\cup\set{t}$ where $t$ is a constant symbol. For $n\in \N^{>1}$, let $T_n$ be the $\LLr^t$-theory of rings $R$ such that
\begin{enumerate}
    \item $R$ is a local ring,
    \item the maximal ideal is principal, generated by the constant $t$: $\cM = \vect{t}$,
    \item the residue field $k = R/\cM$ is of characteristic $0$,
    \item $t^{n-1} \neq 0$ and $t^n = 0$.
\end{enumerate}

We write $(R,k,t)$ for a model of $T_n$ so that $R$ is the local ring, $k = R/\cM$ is the residue field and $t$ is the generator of $\cM$. 

\begin{theorem}\label{thm:extraAKE}
    Let $(R,t)$ be a model of $T_n$ with residue field $k$. Then 
    \[(R,t) \cong (k[X]/\vect{X^n},X)\]
    In particular for two models $(R,k,t)$ and $(R',k',t')$ of $T_n$ we have 
    \[k\cong k'\iff (R,k,t)\cong (R',k',t').\]
\end{theorem}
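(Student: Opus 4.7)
The plan is to directly build a $k$-algebra isomorphism $k[X]/\langle X^n\rangle \to R$ sending $X \mapsto t$ by first lifting the residue field inside $R$.

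First I would observe that $R$ is Henselian. Indeed, $\cM = \langle t\rangle$ satisfies $\cM^n = \langle t^n\rangle = 0$, so the maximal ideal is nilpotent. Given $P \in R[X]$ and $a \in R$ with $P(a) \in \cM$ and $P'(a) \notin \cM$ (so $P'(a) \in R^\times$), the usual Newton iteration $a_{k+1} := a_k - P(a_k)/P'(a_k)$ satisfies $P(a_k) \in \cM^{2^k}$; since $\cM^n = 0$, the sequence stabilizes after finitely many steps to a root $b$ of $P$ with $a - b \in \cM$. Together with assumption $(3)$ that $k$ has characteristic $0$, Theorem \ref{thm:lifting} applies: there is a subfield $F \subseteq R$ such that $\res\!\upharpoonright_F : F \to k$ is an isomorphism.

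Next I would consider the $F$-algebra homomorphism $\phi: F[X] \to R$ defined by $X \mapsto t$. Since $t^n = 0$, it factors through $\overline\phi : F[X]/\langle X^n\rangle \to R$. For surjectivity, I would argue by a repeated lifting: given $r \in R$, there is $f_0 \in F$ with $\res(f_0) = \res(r)$, so $r - f_0 \in \cM = \langle t\rangle$ and $r = f_0 + t r_1$ for some $r_1 \in R$. Iterating $n$ times produces $r = f_0 + f_1 t + \cdots + f_{n-1}t^{n-1} + t^n r_n$ with $f_i \in F$, and the last term vanishes. For injectivity, suppose $\sum_{i=0}^{n-1} f_i t^i = 0$ with some $f_i \neq 0$, and let $j$ be the smallest such index. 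Then
\[
t^j \Bigl(f_j + t\sum_{i>j} f_{i}\, t^{i-j-1}\Bigr) = 0.
\]
Since $f_j \in F \setminus \{0\} \subseteq R^\times$ and $t \cdot \sum_{i>j} f_{i}\, t^{i-j-1} \in \cM$ is nilpotent, the bracketed factor is a unit (a unit plus a nilpotent). Hence $t^j = 0$, forcing $j \geq n$ by hypothesis $(4)$, contradicting $j \leq n-1$.

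Composing $\overline\phi$ with the inverse of the isomorphism $\res\!\upharpoonright_F: F[X]/\langle X^n\rangle \xrightarrow{\sim} k[X]/\langle X^n\rangle$ (applied coefficientwise) yields the desired isomorphism $(k[X]/\langle X^n\rangle, X) \cong (R, t)$. The ``In particular'' clause is then immediate: if $k \cong k'$ via some field isomorphism, then $k[X]/\langle X^n\rangle \cong k'[X]/\langle X^n\rangle$ (sending $X \mapsto X$), and the first part gives $(R,k,t) \cong (R',k',t')$; the converse is trivial since an isomorphism of the local rings induces one on the residue fields. The main obstacle is the injectivity step — specifically, ensuring that the ``leading coefficient'' $f_j$ remains invertible after reduction — but this is handled cleanly by the ``unit plus nilpotent is a unit'' trick, so the proof is essentially bookkeeping once the lifting theorem is in hand.
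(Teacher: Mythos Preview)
Your proof is correct and follows essentially the same route as the paper: establish that $R$ is Henselian, invoke Theorem~\ref{thm:lifting} to lift $k$ to a subfield $F\subseteq R$, then show the map $F[X]/\langle X^n\rangle\to R$, $X\mapsto t$, is an isomorphism. The only notable difference is in the Henselianity step: the paper defines a valuation $v(r)=\max\{i:r\in\langle t^i\rangle\}$, observes $R$ is complete for the associated norm (since $v$ takes finitely many values), and appeals to the standard ``complete $\Rightarrow$ Henselian'' fact, whereas you argue directly from nilpotence of $\cM$ via Newton iteration---arguably the more transparent route here. Your surjectivity and injectivity arguments are exactly the content of the paper's Claim~\ref{claim:last} (left there as Exercise~\ref{exo:appendix}), and your ``unit plus nilpotent'' trick is equivalent to the hint ``$1-tr$ is a unit'' given there.
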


\begin{proof} We start with a claim.
\begin{claim}\label{claim:last}
    Let $R$ be a local ring with $t$ a generator of the maximal ideal $\cM$ of $R$. Let $A$ be a set of representatives of $R$ modulo $\cM$. Then:
    \begin{enumerate}
        \item for each $n\in \N$, $r\in R$, there exists $a_0,\ldots, a_{n-1}\in A$ and $s\in R$ such that
        \[r = a_0+a_1t+\ldots+a_{n-1}t^{n-1}+st^n.\]
        \item If $t^{n-1}\neq 0$ then $t^m\notin \vect{t^{m+1}}$ for $m< n$ and $(a_0,\ldots,a_{n-1})$ in $(1)$ is uniquely determined by $n,r$.
    \end{enumerate}
\end{claim}
\begin{proof}[Proof of Claim \ref{claim:last}]
    See Exercise \ref{exo:appendix}.
\end{proof}
Using $(2)$ of the claim, there is a strictly descending chain of ideals
\[R = \vect{1}\supsetneq \vect{t}\supsetneq \ldots \supsetneq \vect{t^{n-1}}\supsetneq \vect{t^n} = \set 0.\]
Hence we may define a valuation $v:R\to \Z\cup\set{\infty}$ by 
\[v(r) = \begin{cases}
    \max\set{i\mid r\in \vect{t^i}} &\text{if $r\neq 0$}\\
    \infty &\text{if $r = 0$}
\end{cases}\]
and a norm on $R$ via $\abs{r} = 2^{-v(r)}$. Since $v$ takes only finitely many values, $R$ is complete in the metric induced by $\mid\cdot\mid$ so $R$ is Henselian (see Remark \ref{rk:henselslemma}). By Theorem \ref{thm:lifting}, there is a ring embedding $j:k\to R$ such that $\res(j(x)) = x$ for all $x\in k$. In particular, $A := j(k)$ is a set of representatives modulo $\cM$. By the universal property of polynomial rings, $j$ extends to a ring homomorphism $j_t:k[X]\to R$ by setting $j_t(X) = t$. By $(1)$ of the claim, $j_t$ is onto and by $(2)$ $\ker j_t = \vect{X^n}$.
\end{proof}

Here is an immediate consequence, which could be considered as and Ax-Kochen principle for finite local rings.

\begin{corollary}
    Let $\cU$ be a non-principal ultrafilter on the set of prime numbers. Then for any $n>1$ 
    \[\prod_\cU (\Z/p^n\Z, p)\cong \prod_\cU (\F_p[X]/\vect{X^n}, X) \quad\text{as $\LLr^t$-structure}.\]
    In particular, for any $\LLr$-sentence $\phi$ 
    \[(\Z/p^n\Z, p)\models \phi \iff (\F_p[X]/\vect{X^n}, X)\models \phi\]
    for all but finitely many primes $p$.
\end{corollary}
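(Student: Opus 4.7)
The plan is to deduce the ultraproduct isomorphism directly from Theorem \ref{thm:extraAKE}, then extract the asymptotic first-order statement by a standard contrapositive argument with non-principal ultrafilters and \L o\'s's theorem.

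First, I would verify that the two $\LLr^t$-structures
\[R := \prod_\cU (\Z/p^n\Z, p) \quad\text{and}\quad R' := \prod_\cU (\F_p[X]/\vect{X^n}, X)\]
are both models of the theory $T_n$. Each factor $\Z/p^n\Z$ (resp.\ $\F_p[X]/\vect{X^n}$) is a local ring whose maximal ideal is principal, generated by the distinguished constant $p$ (resp.\ $X$), and satisfies $t^{n-1}\neq 0$ and $t^n=0$. Because these conditions are expressed by $\LLr^t$-sentences, \L o\'s's theorem transfers them to $R$ and $R'$. The residue field of $R$ is the quotient by the ideal generated by the interpretation of $t$, which by \L o\'s equals $\prod_\cU (\Z/p^n\Z)/\vect{p} = \prod_\cU \F_p$, and similarly the residue field of $R'$ equals $\prod_\cU \F_p$. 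Since $\cU$ is non-principal, for each prime $q$ the sentence $q\neq 0$ holds in $\F_p$ for all but one $p$, so $\prod_\cU \F_p$ has characteristic $0$; hence the residue fields of $R$ and $R'$ have characteristic $0$, completing the check that both lie in the class $T_n$.

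Next, since $R$ and $R'$ are models of $T_n$ with \emph{identical} residue field $k := \prod_\cU \F_p$, Theorem \ref{thm:extraAKE} gives
\[(R,t_R) \cong (k[Y]/\vect{Y^n},Y) \cong (R',t_{R'})\]
as $\LLr^t$-structures, where $t_R$ and $t_{R'}$ denote the interpretations of the constant $t$ in $R$ and $R'$ respectively. This is exactly the first claimed isomorphism.

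Finally, for the asymptotic equivalence of $\LLr$-sentences, I would argue by contradiction. Let $\phi$ be an $\LLr$-sentence and suppose the set
\[S := \set{p \text{ prime}\mid (\Z/p^n\Z)\models \phi \not\Leftrightarrow (\F_p[X]/\vect{X^n})\models \phi}\]
is infinite. Partition $S = S_0\sqcup S_1$ according to on which side $\phi$ holds; one of $S_0,S_1$ is infinite, say $S_0$. Choose a non-principal ultrafilter $\cU$ with $S_0\in \cU$. By \L o\'s's theorem, $R\models \phi$ while $R'\models \neg\phi$, contradicting the isomorphism $R\cong R'$ established above (an $\LLr^t$-isomorphism is in particular an $\LLr$-isomorphism, so it preserves satisfaction of $\LLr$-sentences). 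Hence $S$ is finite, proving the second statement.

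There is no serious obstacle here: the only genuine input is Theorem \ref{thm:extraAKE}, and all remaining steps are routine applications of \L o\'s's theorem; the one point requiring a moment of care is observing that the residue field $\prod_\cU \F_p$ of both ultraproducts indeed has characteristic $0$, so that they really are models of $T_n$.
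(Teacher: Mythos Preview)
Your proposal is correct and follows essentially the same approach as the paper: verify both ultraproducts are models of $T_n$ with common residue field $\prod_\cU \F_p$ of characteristic $0$, then invoke Theorem \ref{thm:extraAKE}. You are more explicit than the paper about the ``in particular'' clause, spelling out the contrapositive argument with \L o\'s's theorem (mirroring the earlier derivation of the Ax-Kochen principle from Theorem \ref{thm:AKE}), whereas the paper leaves this implicit.
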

\begin{proof}
    Reasoning as in the proof of the Ax-Kochen principle, the residue field on both sides is the pseudo-finite field $\prod_\cU \F_p$ which is of characteristic $0$. It follows that $\prod_\cU (\Z/p^n\Z, p)$ and $\prod_\cU (\F_p[X]/\vect{X^n}, X)$ are both models of $T_n$ with the same residue field, so Theorem \ref{thm:extraAKE} applies.
\end{proof}

\begin{remark}[For model-theorists]
    One could ask the following question about the rings $ \prod_\cU \Z/p^n\Z$ and $\prod_\cU \F_p[X]/\vect{X^n}$: where do they lie in Shelah's classification landscape? Using Theorem \ref{thm:extraAKE} we have that $R \cong F[X]/\vect{X^n}$ for $F = \prod_\cU \F_p$. It is easy to see that $F[X]/\vect{X^n}$ is definable\footnote{Consider $A = F^n$, defines the addition componentwise and the following multiplication:
    \[(a_0,\ldots,a_{n-1})*(b_0,\ldots,b_{n-1}) := (a_0b_0,\ldots,\sum_{i+j = k} a_ib_j, \ldots, \sum_{i+j = {n-1}} a_ib_j).
    \] Then $(A,+,*)\cong (F[X]/\vect{X^n},+,\cdot)$.} in the pure theory of the pseudo-finite field $F$, so the rings $ \prod_\cU \Z/p^n\Z$ and $\prod_\cU \F_p[X]/\vect{X^n}$ are simple.
\end{remark}

\begin{exercise}\label{exo:appendix}
    The goal here is to prove Claim \ref{claim:last}. We keep the same notations as in the claim.
    \begin{enumerate}[label=(\alph*)]
        \item Prove $(1)$ using induction on $n$.
        \item Prove that if $t^{n-1}\neq 0$ then $t^m\notin \vect{t^{m+1}}$ for all $m<n$. (\textit{Hint.} Observe that $1-tr$ is a unit, for all $r\in R$.)
        \item Assume that $\sum_{i = 0}^{n-1} a_i ^i + rt^{n} = \sum_{i = 0}^{n-1} b_i ^i + st^{n} $ for $a_i,b_i\in A$, $r,s\in R$. By contradiction, let $m$ be the least $i$ such that $a_i\neq b_i$. Prove that $(a_m-b_m)t^m\in \vect{t^{m+1}}$.
        \item Prove that $a_m-b_m$ is invertible.
        \item Conclude using $(b)$.
\end{enumerate}
\end{exercise}

\begin{exercise}
    Let $T_n^\ACF$ be the expansion of $T_n$ expressing further that the residue field is algebraically closed. The goal of this exercise is to prove that the theory $T_n^\ACF$ is complete and axiomatize $(\C[X]/\vect{X^n},X)$.
    \begin{enumerate}[label=(\alph*)]
        \item Check that $T_n^\ACF$ is indeed first-order.
        \item Let $R$ be a model of $T^\ACF_n$, prove that $\abs{R} = \abs{k}$ where $k$ is the residue field. (\textit{Hint.} Observe that $R$ is isomorphic to a $k$-vector space of dimension $n$).
        \item Let $\kappa$ be an uncountable cardinal. Prove that two models of cardinality $\kappa$ of $T_n^\ACF$ are isomorphic. $T_n^\ACF$ is called \textit{uncountably categorical}. (\textit{Hint.} Use that two algebraically closed fields of uncountable cardinality are isomorphic and Theorem \ref{thm:extraAKE}.)
        \item Conclude that $T_n^\ACF$ is complete. (\textit{Hint.} Take two arbitrary models $R,R'$ of $T_n^\ACF$ and consider elementary extensions of uncountable cardinality.)
        \item As an application, prove the following Lefschetz principle: for all $\LLr^t$-sentence $\phi$
        \[(\C[X]/\vect{X^n},X)\models \phi \iff (\F_p^\alg [X]/\vect{X^n},X)\models \phi\]
        for all but finitely many primes $p$.
    \end{enumerate}
\end{exercise}

\bibliographystyle{plain}
\bibliography{biblio}

\begin{thebibliography}{10}

\bibitem{ax64}
James Ax.
\newblock Zeroes of polynomials over finite fields.
\newblock {\em Amer. J. Math.}, 86:255--261, 1964.

\bibitem{AK65I}
James Ax and Simon Kochen.
\newblock Diophantine problems over local fields. {I}.
\newblock {\em Amer. J. Math.}, 87:605--630, 1965.

\bibitem{AK65II}
James Ax and Simon Kochen.
\newblock Diophantine problems over local fields. {II}. {A} complete set of
  axioms for {$p$}-adic number theory.
\newblock {\em Amer. J. Math.}, 87:631--648, 1965.

\bibitem{Brown78}
Scott~Shorey Brown.
\newblock Bounds on transfer principles for algebraically closed and complete
  discretely valued fields.
\newblock {\em Mem. Amer. Math. Soc.}, 15(204):iv+92, 1978.

\bibitem{Chatzicoursvaluation}
Zo\'e Chatzidakis.
\newblock Cours de m2: Théorie des modèles des corps valués.
\newblock {\em https://www.math.ens.psl.eu/~zchatzid/papiers/cours08.pdf},
  2008.

\bibitem{cherlin}
Greg Cherlin.
\newblock {\em Model theoretic algebra---selected topics}, volume Vol. 521 of
  {\em Lecture Notes in Mathematics}.
\newblock Springer-Verlag, Berlin-New York, 1976.

\bibitem{Chev35}
C.~Chevalley.
\newblock D\'{e}monstration d'une hypoth\`ese de {M}. {A}rtin.
\newblock {\em Abh. Math. Sem. Univ. Hamburg}, 11(1):73--75, 1935.

\bibitem{ershov65}
Ju.~L. Er\v{s}ov.
\newblock On elementary theories of local fields.
\newblock {\em Algebra i Logika Sem.}, 4(2):5--30, 1965.

\bibitem{Greenberg66}
Marvin~J. Greenberg.
\newblock Rational points in {H}enselian discrete valuation rings.
\newblock {\em Inst. Hautes \'{E}tudes Sci. Publ. Math.}, (31):59--64, 1966.

\bibitem{Greenbegr67}
Marvin~J. Greenberg.
\newblock {\em Lectures on forms in many variables}.
\newblock W. A. Benjamin, Inc., New York-Amsterdam, 1969.

\bibitem{Hasse23}
H.~Hasse.
\newblock Darstellbarkeit von zahlen durch quadratische formen.
\newblock {\em J. f. reine u. angew. Math.}, 153:113--130, 1923.

\bibitem{HeathbrownICM}
D.~R. Heath-Brown.
\newblock Artin's conjecture on zeros of {$p$}-adic forms.
\newblock In {\em Proceedings of the {I}nternational {C}ongress of
  {M}athematicians. {V}olume {II}}, pages 249--257. Hindustan Book Agency, New
  Delhi, 2010.

\bibitem{jahnkecourse}
Franziska Jahnke.
\newblock An introduction to valued fields.
\newblock In {\em Lectures in model theory}, M\"{u}nst. Lect. Math., pages
  119--149. Eur. Math. Soc., Z\"{u}rich, 2018.

\bibitem{kaplanskimaximalvalued}
Irving Kaplansky.
\newblock Maximal fields with valuations.
\newblock {\em Duke Math. J.}, 9:303--321, 1942.

\bibitem{Lang52}
Serge Lang.
\newblock On quasi algebraic closure.
\newblock {\em Ann. of Math. (2)}, 55:373--390, 1952.

\bibitem{lewis52}
D.~J. Lewis.
\newblock Cubic homogeneous polynomials over {$p$}-adic number fields.
\newblock {\em Ann. of Math. (2)}, 56:473--478, 1952.

\bibitem{Pas89}
Johan Pas.
\newblock Uniform {$p$}-adic cell decomposition and local zeta functions.
\newblock {\em J. Reine Angew. Math.}, 399:137--172, 1989.

\bibitem{Pas90}
Johan Pas.
\newblock On the angular component map modulo {$P$}.
\newblock {\em J. Symbolic Logic}, 55(3):1125--1129, 1990.

\bibitem{Terjanian66}
Guy Terjanian.
\newblock Un contre-exemple \`a une conjecture d'{A}rtin.
\newblock {\em C. R. Acad. Sci. Paris S\'{e}r. A-B}, 262:A612, 1966.

\end{thebibliography}

\end{document}